\newcommand{\simf}{s.i.m.f.}
\newcommand{\spimf}{s.p.i.m.f.}
\newcommand{\C}{\mathbb{C}}
\newcommand{\Q}{\mathbb{Q}}
\newcommand{\Z}{\mathbb{Z}}
\newcommand{\N}{\mathbb{N}}
\newcommand{\F}{\mathbb{F}}
\newcommand{\PS}{\mathbb{P}}
\newcommand{\R}{\mathbb{R}}
\newcommand{\B}{\mathcal{B}^o}
\newcommand{\Forms}{\mathcal{F}}
\newcommand{\calH}{\mathcal{H}}
\newcommand{\ZZ}{\mathcal{Z}}
\newcommand{\GL}{\textnormal{GL}}
\newcommand{\Sp}{\textnormal{Sp}}
\newcommand{\abs}[1]{\lvert #1\rvert}
\newcommand{\tensor}{\otimes}
\newcommand{\isom}{\simeq}
\newcommand{\tpForms}{\Forms_{>0}}
\newcommand{\DP}{\! : \!}
\newcommand{\MAGMA}{{\sc MAGMA}}
\newcommand{\transpose}[1]{#1^{\textnormal{tr}}}
\newcommand{\mtranspose}[1]{#1^{-\textnormal{tr}}}
\newcommand{\tensordown}[1]{\underset{\text{\scriptsize $ #1 $}}{\tensor}}
\DeclareMathOperator{\Diag}{Diag}
\DeclareMathOperator{\Nr}{Nr}
\DeclareMathOperator{\nr}{nr}
\DeclareMathOperator{\Tr}{Tr}
\DeclareMathOperator{\Out}{Out}
\DeclareMathOperator{\End}{End}
\DeclareMathOperator{\Aut}{Aut}
\DeclareMathOperator{\Stab}{Stab}
\DeclareMathOperator{\Syl}{Syl}
\DeclareMathOperator{\Alt}{Alt}
\DeclareMathOperator{\SL}{SL}
\DeclareMathOperator{\PGL}{PGL}
\DeclareMathOperator{\LGruppe}{L}
\DeclareMathOperator{\Gal}{Gal}
\theoremstyle{plain}
\newtheorem{theorem}{Theorem}
\newtheorem{lemma}[theorem]{Lemma}
\newtheorem{prop}[theorem]{Proposition}
\newtheorem{definition}[theorem]{Definition}
\newtheorem{corollary}[theorem]{Corollary}
\theoremstyle{remark}
\newtheorem{remark}[theorem]{Remark}
\newtheorem{example}[theorem]{Example}
\numberwithin{theorem}{section}
\begin{document}

\title{Finite symplectic matrix groups}

\author{Markus Kirschmer\thanks{Lehrstuhl D f\"ur Mathematik, RWTH Aachen University, Templergraben 64, 52062 Aachen, Germany, markus.kirschmer@math.rwth-aachen.de}}
\date{28 August 2009}

\maketitle 

\begin{abstract}
This paper classifies the maximal finite subgroups of $\Sp_{2n}(\Q)$ for $1 \le n \le 11$ up to conjugacy in $\GL_{2n}(\Q)$.
\end{abstract}

\section{Introduction}

The maximal finite subgroups of $\Sp_{2n}(\Q)$ are maximal finite subgroups of $\GL_m(K)$ for minimal totally complex subfields $K$ with $2n=m \cdot [K:\Q]$. Moreover, the maximal finite subgroups of $\Sp_{2n}(\Q)$ are full automorphism groups of Euclidean lattices fixing an additional nondegenerate symplectic form. So the classification yields highly symmetric 
symplectic structures on interesting Euclidean lattices.

The (conjugacy classes of) maximal finite subgroups of $\GL_{m}(\Q)$ have been classified up to $m=31$ in a series of papers \cite{Cryst4,Plesken_app_repr,FRMG,ERMG,FRMG25}.

The strategy for these classifications is as follows. First, it suffices to classify only the maximal finite (symplectic) matrix groups whose natural representation is irreducible over $\Q$. Then one reduces the problem to the so-called (symplectic) primitive matrix groups. The concept of primitivity is the key ingredient for these classifications, since it has important consequences for normal subgroups. Most notably, the restriction of the natural representation of an irreducible (symplectic) primitive matrix group $G<\GL_{m}(\Q)$ onto a normal subgroup $N$ splits into copies of a single irreducible representation of $N$. In particular, each abelian subgroup of $O_p(G)$ must be cyclic. Using a theorem of Philip Hall, this restricts the Fitting subgroup $F(G)$ of $G$ to a finite number of candidates depending only on $m$.
Using the Atlas of Finite Simple Groups or the classification of Hiss and Malle \cite{HissMalle} one gets the candidates for the layer $E(G)$ of $G$. Hence there are only finitely many candidates for the generalized Fitting subgroup $F^*(G)$. Then $G$ can be constructed from $F^*(G)$ using group theory, cohomology and the fact that $G/F^*(G) \le \Out(F^*(G))$.

However, these constructions are quite cumbersome and error-prone. Moreover, they do not use the fact that $G$ is a maximal finite (symplectic) subgroup of $\GL_{m}(\Q)$. In many cases, this method can be avoided using algebraic number theory and the fact that $G$ is the full automorphism group of every $G$-invariant lattice.

The main result of this classification is the following
\paragraph*{Theorem}
The number of maximal finite subgroups of $\Sp_{2n}(\Q)$ (up to conjugacy in $\GL_{2n}(\Q)$) are given by
$$
\begin{array}{c|ccccccccccc}
2n & 2 & 4 & 6 & 8 & 10 & 12 & 14 & 16 & 18 & 20 & 22 \\ \hline
\# \mbox{ primitive classes} & 2& 5& 2& 21& 3& 23& 3& 63& 6& 26& 4  \\
\# \mbox{ classes (total)} & 2 & 6 & 4 & 28 & 5 & 32 & 5 & 91 & 10 & 36 & 6\\
\end{array}
$$
Representatives of these conjugacy classes are available online in a \MAGMA\ \cite{Magma} readable format from\\
\url{http://www.math.rwth-aachen.de/~Markus.Kirschmer/symplectic/} . Section \ref{sectab} explains how to access the groups in the database and how to find a representative in the database that is conjugate to a given symplectic irreducible maximal finite (\simf) matrix group.
A proof of completeness is given in my thesis \cite{MK}. The methods used are explained here as they might be of independent interest and can be applied to classify symplectic matrix groups also in higher degree.
The existence of the famous Leech lattice makes dimension 24  very interesting, 
though a full classification of \simf\ subgroups of $\Sp_{24}(\Q) $ 
 would be quite tedious due to the large number of possible Fitting subgroups.

This paper is organized as follows. Section \ref{secprop} contains some basic definitions for matrix groups. It is shown that it suffices to classify only the maximal finite symplectic matrix groups whose natural representation is irreducible over the rationals. These symplectic irreducible maximal finite (\simf) matrix groups can be characterized by automorphism groups of lattices.

Section \ref{secprim} shows that it suffices to classify only the so-called symplectic primitive matrix groups and it contains the general outline of the classification. The definition and some properties of the generalized Bravais groups are also recalled.

Section \ref{secfam} introduces some infinite families of maximal finite symplectic matrix groups. In particular, all maximal finite symplectic subgroups of $\GL_{p-1}(\Q)$ and $\GL_{p+1}(\Q)$ whose orders are divisible by some prime $p$ are determined. 

Section \ref{sec2p} shows that if $p\ge 5$ is prime, then the Fitting subgroup of a \spimf\ subgroup of $\GL_{2p}(\Q)$ is cyclic of order $2,4$ or $6$. In particular, these groups can be constructed by the methods of Section \ref{secmet}.

Section \ref{secmet} describes some shortcuts that are used frequently in the classification. It gives two results that can be used to rule out some candidates for normal subgroups of index $2^k$. In this section, the $m$-parameter argument is recalled which can be used to find all \simf\ matrix groups that contain a given matrix group whose commuting algebra is a field. Finally, it is explained how to find all \simf\ matrix groups that contain a given normal subgroup whose commuting algebra is a quaternion algebra.

\section{Some definitions and basic properties}\label{secprop}

\begin{definition}
Let $G < \GL_m(\Q)$. 
\begin{enumerate}
\item $\Forms(G) = \{ F \in \Q^{m \times m} \mid gF\transpose{g} = F \mbox{ for all } g \in G\}$ is called the \emph{form space} of $G$. Further $\Forms_{sym},\; \Forms_{>0}$ and $\Forms_{skew}$ denote the subsets of symmetric, positive definite symmetric and skewsymmetric forms.
\item The group $G$ is called \emph{symplectic} if $\Forms_{skew}(G)$ contains an invertible element.
\item $\ZZ(G) = \{ L \subset \Q^{1\times m} \mid L \mbox{ a rank $m$ lattice with $Lg=L$ for all $g\in G$} \}$ is the set of (full) \emph{$G$-invariant lattices}.
\item $\End(G)= \{e \in \Q^{m \times m}\mid eg=ge \mbox{ for all } g \in G\}$ is the commuting algebra (or endomorphism ring) of $G$.
\item If $K$ is a subfield of $\End(G)$ then 
$$\Aut_K(L,F) = \{g \in \Q^{m \times m} \mid Lg=L,\; g F \transpose{g} = F \mbox{ and } ge=eg \mbox{ for all } e \in K\}$$
denotes the group of $K$-linear automorphism of $(L, F)$. If $K \isom \Q$ the subscript $K$ is usually omitted.
\end{enumerate}
\end{definition}

An averaging argument shows that $G < \GL_m(\Q)$ is finite if and only if $\tpForms(G)$ and $\ZZ(G)$ are both nonempty. Thus, the finite subgroup $G < \GL_m(\Q)$ is maximal finite if and only if $G = \Aut(L, F)$ for all $(L, F) \in \ZZ(G) \times \tpForms(G)$.

If $F \in \Forms(G)$ is invertible, then $\End(G) \to \Forms(G),\; e \mapsto eF$ is an isomorphism of $\Q$-spaces. This can be used to characterize the  maximal finite symplectic matrix groups as automorphism groups of lattices.

Before this result is stated, the next lemma shows that it suffices to consider only rationally irreducible matrix groups, i.e. matrix groups whose natural representation is irreducible over the rationals.

\begin{lemma}\label{irred_OK}
The natural representation $\triangle\colon G \to \GL_{2n}(\Q),\; g \mapsto g$ of a maximal finite symplectic subgroup $G < \GL_{2n}(\Q)$ is the direct sum of rationally irreducible and pairwise nonisomorphic representations of $G$.
\end{lemma}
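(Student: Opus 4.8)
The plan is to show that $\triangle$ is a direct sum of rationally irreducible representations (i.e., $G$ is completely reducible over $\Q$) and then that no irreducible constituent occurs with multiplicity greater than one. Complete reducibility is automatic: $G$ is finite, so by Maschke's theorem its natural representation over $\Q$ decomposes as a direct sum of rationally irreducible subrepresentations. Concretely, if $0 \neq U \subsetneq \Q^{1 \times 2n}$ is a $G$-invariant subspace, pick any $F_0 \in \tpForms(G)$ (nonempty since $G$ is finite), and the orthogonal complement $U^{\perp}$ with respect to $F_0$ is again $G$-invariant and complements $U$; iterate. So write $\triangle \isom \bigoplus_{i=1}^r m_i \triangle_i$ with the $\triangle_i$ pairwise nonisomorphic rationally irreducible representations and multiplicities $m_i \ge 1$.

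The substantive point is that all $m_i = 1$, and here is where maximality is used. Suppose some $m_j \ge 2$. I would argue that then $G$ is not a \emph{maximal} finite symplectic subgroup of $\GL_{2n}(\Q)$: one can enlarge $G$ inside $\GL_{2n}(\Q)$ by a finite group still fixing an invertible skewsymmetric form, contradicting maximality. The idea is that the isotypic component $W_j$ corresponding to $\triangle_j$ is isomorphic to $V_j \tensor_{D_j} D_j^{m_j}$ where $V_j$ is the irreducible $\Q G$-module and $D_j = \End_{\Q G}(V_j)$ is the (division-algebra) commuting algebra; the full centralizer of $G$ on $W_j$ is the matrix algebra $D_j^{m_j \times m_j}$, and one has extra "permutation-type" symmetries — e.g. the group swapping two isomorphic copies, or more generally a finite subgroup of $\GL_{m_j}(D_j)$ acting on the multiplicity space — that normalize $\triangle(G)$ and commute with it. The key technical step is to check that among these extra symmetries one can find a finite supergroup that still preserves an invertible skewsymmetric form: since $G$ is symplectic, $\Forms_{skew}(G)$ contains an invertible $F$, and via $\End(G) \to \Forms(G),\; e \mapsto eF$ the skew forms correspond to elements $e$ of the commuting algebra with $\transpose{(eF)} = -eF$, i.e. to an appropriate (anti-)involution on $\End(G)$; one arranges the extra symmetries to be unitary/symplectic for this structure on each block. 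Concretely, one equips the multiplicity space with the induced skew (or Hermitian) form and takes a maximal finite subgroup of its isometry group, which is nontrivial as soon as $m_j \ge 2$. Taking the direct product over the blocks of $G$ with these extra finite symmetry groups yields a finite symplectic matrix group strictly containing $G$ — contradiction.

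Finally, pairwise non-isomorphism of the constituents is just the bookkeeping choice of collecting isomorphic constituents into the multiplicities $m_i$; once all $m_i = 1$ the $\triangle_i$ are automatically pairwise nonisomorphic.

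I expect the main obstacle to be the symplectic bookkeeping in the multiplicity argument: one must track how the invertible skewsymmetric form $F$ restricts to each isotypic block and to the multiplicity space, distinguishing the cases according to the type of $D_j$ with its involution (so that on the multiplicity space one gets a symplectic, orthogonal, or Hermitian form depending on the block), and then invoke the existence of a nontrivial finite isometry group in each case. An alternative, cleaner route that sidesteps some of this: observe that if $m_j \ge 2$ then $\End(G)$ contains the matrix algebra $D_j^{m_j \times m_j}$, hence contains a copy of $D_j^{2\times 2}$, which contains non-central idempotents and in particular non-scalar \emph{finite-order} units generating a group that, together with $G$, is still finite; one then only needs to verify such a unit can be chosen inside $\Aut(L,F)$ for suitable $L \in \ZZ(G)$ and $F \in \Forms_{skew}(G)$, again using the form–endomorphism correspondence. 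Either way, the crux is producing a genuine finite symplectic overgroup from the presence of a repeated constituent.
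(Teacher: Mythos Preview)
Your proposal is correct and follows the same strategy as the paper: show that a repeated irreducible constituent allows one to build a strictly larger finite symplectic group, contradicting maximality. The paper's execution is crisper than your sketch in two respects. First, it immediately reduces to a single isotypic block by observing that $\End(G)$ and hence $\Forms(G)$ decompose as direct sums over the isotypic components, so each $n_i\triangle_i(G)$ is itself maximal finite symplectic; this lets one treat one block at a time rather than assembling overgroups across blocks. Second, instead of abstractly invoking isometry groups of the induced form on the multiplicity space, the paper names the overgroup explicitly according to whether $E_i=\End(\triangle_i(G))$ is totally real or not: if $E_i$ is not totally real then $\triangle_i(G)$ is already symplectic (Lemma~\ref{char_symp}) and $n_i\triangle_i(G)\le \triangle_i(G)\wr S_{n_i}$; if $E_i$ is totally real then $n_i$ is forced to be even and $n_i\triangle_i(G)$ embeds in $\triangle_i(G)\tensor H$ for any maximal finite symplectic $H<\GL_{n_i}(\Q)$. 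This dichotomy is exactly the case split you anticipate (``symplectic, orthogonal, or Hermitian form on the multiplicity space''), just made concrete, and it bypasses the bookkeeping you flag as the main obstacle.
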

\begin{proof}
Suppose $\triangle$ decomposes into $\oplus_{i=1}^s n_i \triangle_i$ where $\triangle_1, \dots,\triangle_s$ are rationally irreducible and pairwise nonisomorphic.
Since the endomorphism ring and thus the form space of $G$ decomposes into direct sums, it follows that each group $n_i \triangle_i(G)$ is maximal finite symplectic. Hence it suffices to show that $n_i = 1$ for all $i$. Let $E_i = \End(\triangle_i(G))$ be the commuting algebra of $\triangle_i(G)$. If $E_i$ is not a totally real number field then $\triangle_i(G)$ is already symplectic (see Lemma \ref{char_symp}) and thus $n_i \triangle_i(G)$ is contained in the wreath product $ \triangle_i(G) \wr S_{n_i}$. If $E_i$ is a totally real number field then $n_i$ must be even since the form space of $n_i \triangle_i(G)$ contains an invertible skewsymmetric element. But then $n_i \triangle_i(G)$ is conjugate to a subgroup of the tensor product $\triangle_i(G) \tensor H$ for any maximal finite symplectic $H < \GL_{n_i}(\Q)$.
\end{proof}

\begin{lemma}\label{char_symp}
Let $G < \GL_{2n}(\Q)$ be finite and rationally irreducible. The following statements are equivalent:
\begin{enumerate}
\item $G$ is symplectic.
\item $\End(G)$ contains some (minimal) totally complex subfield.
\item There exists some (minimal) totally complex field $K$ of degree $d$ and some representation $\triangle_K \colon G \to \GL_{\frac{2n}{d}}(K)$ such that the natural character of $G$ is the trace character corresponding to $\triangle_K$.
\end{enumerate}
\end{lemma}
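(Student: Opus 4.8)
The plan is to run everything through the dictionary between the form space and the commuting algebra. Write $E = \End(G)$; since $G$ is rationally irreducible, Schur's lemma makes $E$ a finite-dimensional $\Q$-division algebra, and since $G$ is finite I may fix a positive definite $F_0 \in \tpForms(G)$. Using the isomorphism $\End(G) \to \Forms(G),\ e \mapsto eF_0$ recalled in the text, I would introduce the adjoint involution $e \mapsto \bar e := F_0 \transpose{e} F_0^{-1}$ on $E$ — characterised intrinsically by the fact that, with respect to the Euclidean form attached to $F_0$, multiplication by $\bar e$ on the natural module is the adjoint of multiplication by $e$ — and observe that under $e \mapsto eF_0$ the symmetric forms correspond to $\{e : \bar e = e\}$ and the skew forms to $E^- := \{e : \bar e = -e\}$. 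Because $E$ is a division algebra, every nonzero element is invertible, so statement (1) is equivalent to $E^- \neq 0$.

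The crux is a positivity argument. For $a \in E$ with $\bar a = a$, multiplication by $a$ on the natural module is self-adjoint with respect to the positive definite form attached to $F_0$, hence conjugate over $\R$ to a symmetric matrix; its eigenvalues are therefore real, and since they are exactly the roots of the minimal polynomial of $a$ over $\Q$ (which is irreducible because $\Q(a)$ is a field, and equals the characteristic polynomial up to a power), the field $\Q(a)$ is totally real. Dually, if $\bar a = -a$ and $a \neq 0$, then multiplication by $a$ is skew-adjoint, hence conjugate over $\R$ to a real skew-symmetric matrix whose eigenvalues are nonzero and purely imaginary, so $\Q(a)$ is totally complex. From this, (1) $\Leftrightarrow$ (2) is immediate: if $G$ is symplectic, any nonzero $a \in E^-$ furnishes a totally complex subfield $\Q(a)$ of $E$ (shrink to a minimal totally complex subfield if desired); conversely, if $E$ contains a totally complex subfield $K$, the restriction of $e \mapsto \bar e$ to $K$ cannot fix $K$ pointwise — otherwise $\Q(\alpha)$ would be totally real for every $\alpha \in K$, forcing $K$ totally real — so there is $\alpha \in K$ with $\bar\alpha \neq \alpha$, and then $\alpha - \bar\alpha$ is a nonzero element of $E^-$, giving an invertible skew form.

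For (2) $\Leftrightarrow$ (3) I would use restriction of scalars. Given a totally complex subfield $K \subseteq E$ of degree $d$, the natural module is a $K$-vector space of dimension $\frac{2n}{d}$ on which $G$ acts $K$-linearly (because $K$ commutes with $G$), which defines $\triangle_K \colon G \to \GL_{\frac{2n}{d}}(K)$; the standard fact that the $\Q$-trace of a $K$-linear operator equals $\Tr_{K/\Q}$ of its $K$-trace shows that the natural character of $G$ is the trace character attached to $\triangle_K$. Conversely, given such a $\triangle_K$, the $\Q$-representation $\mathrm{Res}_{K/\Q}(\triangle_K)$ has degree $2n$ and character equal to that trace character, hence — since $\Q[G]$ is semisimple and characters determine representations in characteristic $0$ — it is $\Q$-equivalent to the natural representation; as scalar multiplication by $K$ lies in its commuting algebra, transporting along this equivalence exhibits a copy of the totally complex field $K$ inside $\End(G)$.

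The main obstacle I anticipate is bookkeeping rather than depth: pinning down the adjoint involution and the symmetric/skew correspondence exactly (including the left/right conventions for how $\End(G)$ acts on the natural module), and making sure that ``trace character attached to $\triangle_K$'' is read as $\Tr_{K/\Q}$ applied to the $K$-valued trace, so that the degree $2n$ matches on both sides. The genuinely mathematical input — that a self-adjoint (respectively skew-adjoint) rational operator with respect to a positive definite form has real (respectively nonzero purely imaginary) eigenvalues — is elementary, so no appeal to Albert's classification of positive involutions is needed.
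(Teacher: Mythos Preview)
Your proof is correct and follows essentially the same route as the paper: both pass through the isomorphism $\End(G)\to\Forms(G)$ given by $e\mapsto eF_0$, observe that symmetric (resp.\ skewsymmetric) forms correspond to selfadjoint (resp.\ antiselfadjoint) elements of $E$ with respect to the Euclidean structure defined by $F_0$, and use the elementary spectral fact that such operators generate totally real (resp.\ totally complex) subfields; the equivalence with (3) is in both cases handled by restriction of scalars, which the paper abbreviates as ``ordinary representation theory''. Your version is more explicit---you name the adjoint involution, verify it is an involution implicitly via $\alpha-\bar\alpha\in E^-$, and spell out both directions of (2)$\Leftrightarrow$(3)---but the mathematical content is the same.
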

\begin{proof}
Let $F \in \tpForms(G)$. The space $\Forms(G)$ is closed under taking transposes, hence it decomposes into $\Forms_{sym}(G) \oplus \Forms_{skew}(G)$. Since
$E:= \End(G)$ is a skewfield it follows from $\Forms(G)=E\cdot F$ that $G$ is symplectic if and only if $\Forms(G) \ne \Forms_{sym}(G)$ which is equivalent to say that $\Forms_{skew}(G)\ne \{0\}$.

If $e\in E$ such that $eF$ is symmetric (skewsymmetric) then $e$ is a selfadjoint (antiselfadjoint) endomorphism of the Euclidean space $(\R^n, F)$. Hence $\Q[e]\le E$ is totally real (complex). Thus the first two statements are equivalent. The equivalence of the last two statements follows from ordinary representation theory.
\end{proof}

The symplectic irreducible maximal finite (\simf) matrix groups can now be characterized as follows.
\begin{corollary}
A finite rationally irreducible symplectic matrix group $G < \GL_{2n}(\Q)$ is \simf\ if and only if $G=\Aut_K(L, F) $ for all $(L, F) \in \ZZ(G) \times \tpForms(G)$ and all minimal totally complex subfields of $\End(G)$.
\end{corollary}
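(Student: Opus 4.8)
The plan is to deduce the Corollary directly from the running characterization of maximal finiteness already established in the text, namely that a finite $G < \GL_m(\Q)$ is maximal finite if and only if $G = \Aut(L,F)$ for all $(L,F) \in \ZZ(G) \times \tpForms(G)$. The only work is to reconcile the $K$-linear automorphism groups $\Aut_K(L,F)$ appearing in the statement with the ordinary automorphism group $\Aut(L,F)$. So I would first fix a minimal totally complex subfield $K \le \End(G)$; such a field exists by Lemma \ref{char_symp}, part (2), since $G$ is symplectic and rationally irreducible. The key observation is that $G$ itself centralizes $K$ (indeed $K \le \End(G)$ means every element of $K$ commutes with every $g \in G$), so for any $(L,F) \in \ZZ(G) \times \tpForms(G)$ we have the chain of inclusions $G \le \Aut_K(L,F) \le \Aut(L,F)$.

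Next I would prove both implications. For the ``only if'' direction, suppose $G$ is \simf. Then $G$ is in particular maximal finite, so $G = \Aut(L,F)$ for all relevant $(L,F)$ by the characterization recalled above; squeezing this through the chain $G \le \Aut_K(L,F) \le \Aut(L,F) = G$ forces $G = \Aut_K(L,F)$ for every $(L,F)$ and every minimal totally complex $K \le \End(G)$. For the ``if'' direction, suppose $G = \Aut_K(L,F)$ for all $(L,F) \in \ZZ(G) \times \tpForms(G)$ and all such $K$. I must show $G$ is maximal finite, i.e. that $G = \Aut(L,F)$ for all such $(L,F)$; equivalently, that the potentially larger group $H := \Aut(L,F)$ coincides with $G$. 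The point is that $H$ is finite (it fixes the positive definite form $F$ and the lattice $L$), so $\Forms(H) \subseteq \Forms(G)$, and since $F \in \Forms(H)$ is invertible the isomorphism $\End(H) \to \Forms(H)$, $e \mapsto eF$, together with the analogous one for $G$, shows $\End(H) \subseteq \End(G)$. Then a minimal totally complex subfield $K$ of $\End(H)$ — which exists because $H \ge G$ is still symplectic, again by Lemma \ref{char_symp} — is also a subfield of $\End(G)$, and it is still minimal totally complex there. Now $H$ centralizes $K$ and fixes $(L,F)$, so $H \le \Aut_K(L,F) = G$ by hypothesis, whence $H = G$.

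The one genuinely delicate point is the claim that a minimal totally complex subfield of $\End(H)$ remains a \emph{minimal} totally complex subfield of $\End(G)$, or more precisely that the hypothesis, which quantifies over minimal totally complex subfields of $\End(G)$, actually supplies the field $K$ we extracted from $\End(H)$. Since $\End(H) \subseteq \End(G)$ and both are skewfields (each the commuting algebra of a rationally irreducible group, $G$ being irreducible and $H \ge G$ hence also irreducible), a subfield of $\End(H)$ that is minimal among totally complex subfields of $\End(H)$ need not a priori be minimal in $\End(G)$; I expect this to be the main obstacle, and I would handle it by instead choosing $K$ to be a minimal totally complex subfield of $\End(G)$ contained in $\End(H)$ — one exists because $\End(H)$ contains \emph{some} totally complex field (as $H$ is symplectic) and any totally complex subfield of $\End(G)$ contains a minimal one, which, being generated by a single antiselfadjoint endomorphism as in the proof of Lemma \ref{char_symp}, can be taken inside $\End(H)$ as well. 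With $K$ so chosen the hypothesis applies verbatim and the argument closes. Everything else is bookkeeping with the $\End \leftrightarrow \Forms$ correspondence and the finiteness criterion already in hand.
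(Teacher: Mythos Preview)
Your proposal rests on a misreading of what \simf\ means. In this paper \simf\ abbreviates ``symplectic irreducible maximal finite'', where ``maximal finite'' is maximality among finite \emph{symplectic} matrix groups (equivalently, maximal finite subgroups of $\Sp_{2n}(\Q)$ up to conjugacy), not maximality among all finite subgroups of $\GL_{2n}(\Q)$. These notions differ: $C_4 < \Sp_2(\Q)$ is \simf, yet $C_4 < D_8 = \Aut(\Z^2, I_2)$, so $C_4$ is not maximal finite in $\GL_2(\Q)$.

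This breaks both directions. In your ``only if'' direction you deduce from \simf\ that $G = \Aut(L,F)$; the example above shows this is false. In your ``if'' direction you set $H = \Aut(L,F)$ and claim $H$ is symplectic because $H \ge G$; but a supergroup of a symplectic group need not be symplectic (again $D_8 \ge C_4$, and $\End(D_8) \cong \Q$ has no totally complex subfield), so your extraction of a totally complex $K \subset \End(H)$ fails right there.

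The paper states the corollary without proof, but the intended argument runs as follows. For ``only if'': the group $\Aut_K(L,F)$ is finite (it fixes $L$ and $F$) and \emph{symplectic}, because it centralizes the totally complex field $K$ and therefore fixes the invertible skew form $eF$ for any anti-selfadjoint $e \in K$ (cf.\ the proof of Lemma~\ref{char_symp}); maximality among finite symplectic groups then forces $G = \Aut_K(L,F)$. For ``if'': take an arbitrary finite \emph{symplectic} $H \ge G$, choose $(L,F) \in \ZZ(H) \times \tpForms(H) \subset \ZZ(G)\times\tpForms(G)$, and let $K$ be a minimal totally complex subfield of $\End(H)\subseteq \End(G)$; then $H \le \Aut_K(L,F) = G$.

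Finally, the minimality issue you flagged is not actually an obstacle. Minimality of $K$ is an intrinsic property of the field: it says no proper subfield of $K$ is totally complex. Any such proper subfield would lie in $\End(H)$ anyway, so a $K$ minimal in $\End(H)$ is automatically minimal in the larger $\End(G)$; no special choice or extra argument is needed.
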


It follows for example from \cite[Theorems 3.7 and 3.25]{Artin} that $\Q^{m \times m}$ contains an invertible skewsymmetric matrix $A$ if and only if $m$ is even. Moreover, if $m=2n$, then there exists some $T \in \GL_{2n}(\Q)$  such that $T A \transpose{T}=J_n$ where $J_n:= \left( \mbox{\small $\begin{array}{rr} 0 & I_n\\ -I_n & 0  \end{array}$} \right)$. Thus each $\GL_{2n}(\Q)$-conjugacy class of symplectic subgroups of $\GL_{2n}(\Q)$ contains a representative in 
$$\Sp_{2n}(\Q) := \{g \in \GL_{2n}(\Q) \mid g J_n \transpose{g} = J_n\}\:.$$ 

If $G < \Sp_{2n}(\Q)$, one might ask how $\{G^x \mid x \in \GL_{2n}(\Q) \mbox{ and } G^x < \Sp_{2n}(\Q)\}$ decomposes into $\Sp_{2n}(\Q)$-conjugacy classes. The following example shows that there are infinitely many such classes.

\begin{example}
Let $g = \mbox{\small $\left(\begin{array}{@{}r@{\;\;}r@{}} 0 & 1 \\ -1 & 0 \end{array}\right)$}$ and $G = \left\langle g \right\rangle < \Sp_{2}(\Q)$. For $a \in \Q$ set $h_a = \Diag(1, a)$. Then $G^{h_a} < \Sp_2(\Q)$ for all such $a$. Moreover, $G^{h_a}$ is conjugate to $G^{h_b}$ in $\Sp_2(\Q)$ if and only if $\abs{ab} \in \Nr_{\Q(i)/\Q}(\Q(i)^*)$.
\end{example}
\begin{proof}
Let $t:= \mbox{\small $\left(\begin{array}{@{}r@{\;\;}r@{}} 0 & 1 \\ 1 & 0 \end{array}\right)$}$. Then $G^{h_a}$ is conjugate to $G^{h_b}$ in $\Sp_2(\Q)$ if and only if there exists some $x \in \Sp_{2n}(\Q)$ such that $g^{h_a} = \pm g^{h_b x} = g^{t^k h_b x}$ with $k \in \{0,1\}$. This is equivalent to say that $\alpha:= t^k h_b x h_{a^{-1}}$ is contained in the commuting algebra of $G$, which is isomorphic to $\Q(i)$. Taking determinants yields $(-1)^k \cdot b/a = \Nr_{\Q(i)/\Q}(\alpha) > 0$ as claimed.
\end{proof}

\section{Primitivity}\label{secprim}

Lemma \ref{irred_OK} shows that it suffices to classify only the (conjugacy classes of) rationally irreducible maximal finite symplectic matrix groups.
This set can be restricted further as follows.

\begin{definition}
Let $K$ be a field. An irreducible matrix group $G < \GL_m(K)$ is called primitive if it is not conjugate to some subgroup $H \wr S_k$ for some $H < \GL_d(K)$ with $kd=m$.

Similarly, an irreducible symplectic subgroup $G < \GL_{2n}(\Q)$ is called symplectic primitive, if it is not conjugate to a subgroup of $H \wr S_k$ for some $H < \Sp_{2d}(\Q)$ with $kd= n$.
\end{definition}

Clearly, the conjugacy classes of symplectic imprimitive matrix groups can be constructed from the classifications of smaller dimensions. Furthermore, an irreducible symplectic matrix group $H$ is imprimitive if and only if there exists some invariant lattice $L \in \ZZ(H)$ that admits a nontrivial decomposition $L = \perp_i L_i$ where the components $L_i$ are pairwise perpendicular with respect to the full form space $\Forms(H)$. Thus, symplectic imprimitive groups can be recognized easily.

The concept of primitivity has some important consequences for normal subgroups.

\begin{remark}
Let $G < \GL_m(K)$ be primitive and $N \unlhd G$. Then $G$ acts on $N$ by conjugation. Hence it also acts on the set of central primitive idempotents of $\left \langle N\right\rangle_K$. Thus $G$ permutes the homogeneous components of the natural $KN$-module $K^{1\times m}$. But since $G$ is primitive, there can only be one such component. Hence the natural $KN$-module $K^{1\times m}$ splits into a direct sum of $k$ isomorphic $KN$-modules of dimension $\frac{m}{k}$.
\end{remark}

\begin{lemma}\label{mainlemma}
Let $G < \GL_{2n}(\Q)$ be irreducible and symplectic primitive.
\begin{enumerate}
\item If $N \unlhd G$ then the natural character of $N$ is a multiple of a single rationally irreducible character. 
\item If $O_p(G) \ne 1$ then $p-1$ divides $2n$.
\item Every abelian characteristic subgroup of $O_p(G)$ is cyclic.
\end{enumerate}
\end{lemma}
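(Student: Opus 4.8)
The plan is to establish (1) by refining the argument of the preceding remark so that it respects the symplectic form, and then to read off (3) and (2).

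For (1), write $V = \Q^{1\times 2n}$ and decompose it into its homogeneous components $V = V_1 \perp \dots \perp V_t$ as a $\Q N$-module, where $V_i$ is $S_i$-isotypic for pairwise nonisomorphic simple $\Q N$-modules $S_i$. As in the remark, $G$ normalizes $N$ and therefore permutes the $V_i$, and since $V$ is $\Q$-irreducible as a $\Q G$-module this permutation is transitive; in particular the $V_i$ have a common $\Q$-dimension $e$ with $te = 2n$. The step where symplectic primitivity is used is the claim that the nondegenerate alternating form $B$ associated with some invertible $F \in \Forms_{skew}(G)$, which is $G$-invariant and hence $N$-invariant, is block diagonal with respect to this decomposition. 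Indeed, an $N$-invariant bilinear pairing $V_i \times V_j \to \Q$ is the same as a $\Q N$-homomorphism from $V_i$ to the contragredient $V_j^*$; since a $\Q N$-module has rational-valued character, each $V_j$ is self-dual, so $\mathrm{Hom}_{\Q N}(V_i, V_j^*) = \mathrm{Hom}_{\Q N}(V_i, V_j) = 0$ whenever $i \ne j$. Hence $B|_{V_i}$ is nondegenerate for every $i$, so $(V_i, B|_{V_i})$ is a symplectic $\Q$-space, $e = 2d$ is even, and $td = n$. Transporting $B|_{V_1}$ along a transversal of $\Stab_G(V_1)$ in $G$ identifies $V$ isometrically with $t$ mutually orthogonal copies of $(V_1, B|_{V_1})$ and realizes $G$ inside $H \wr S_t$, where $H < \Sp_{2d}(\Q)$ is the image of $\Stab_G(V_1)$. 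Symplectic primitivity now forces $t = 1$, which is assertion (1).

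Assertion (3) would then be immediate: if $A$ is an abelian characteristic subgroup of $O_p(G)$ it is characteristic, hence normal, in $G$, so by (1) the $\Q A$-module $V$ is isotypic, say $V \isom S^k$ for a simple $\Q A$-module $S$. Since $A \le G$ acts faithfully on $V$ and the action on $S^k$ is diagonal, $A$ already acts faithfully on $S$; as $A$ is abelian, Schur's lemma identifies the image $B$ of $\Q A$ in $\End_\Q(S)$ with a commutative algebra possessing the faithful simple module $S$, hence with a field, so $A$ embeds into $B^\times$ and is cyclic. For (2) I would apply the same reasoning to $A := Z(O_p(G))$, a nontrivial characteristic subgroup of $O_p(G)$ because $O_p(G)\ne 1$ is a finite $p$-group. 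Then $A$ is cyclic of order $p^a$ with $a \ge 1$, and the field $B$ above is generated over $\Q$ by the image of a generator of $A$, so it contains a primitive $p^a$-th root of unity and $\phi(p^a) = p^{a-1}(p-1)$ divides $[B:\Q]$. Finally, via the diagonal embedding $B \hookrightarrow \End_\Q(S) \hookrightarrow \End_\Q(V)$ the space $V$ becomes a vector space over the field $B$, so $[B:\Q]$ divides $\dim_\Q V = 2n$; in particular $p - 1 \mid 2n$.

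The crux, and the only place where any real work is needed, is the symplectic refinement in (1): the homogeneous components of $N$ already give an ordinary imprimitivity, but one must verify that it is a \emph{symplectic} imprimitivity, and this is precisely where the self-duality of rational representations — forcing the invariant alternating form to respect the homogeneous decomposition — comes in. The subsequent identification with a wreath product inside $\Sp_{2n}(\Q)$, and the deductions of (2) and (3), are routine.
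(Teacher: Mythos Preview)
Your proof is correct, and it takes a genuinely different route from the paper's. The paper passes to an irreducible representation $\triangle_K\colon G \to \GL_{2n/d}(K)$ over a minimal totally complex subfield $K \subset \End(G)$, then argues via a Galois orbit on the central primitive idempotents of $\langle G\rangle_K$ that every central idempotent $f_j$ of $\langle N\rangle_\Q$ survives in $\langle \triangle_K(N)\rangle_K$; since symplectic primitivity of $G$ forces $\triangle_K(G)$ to be primitive over $K$, the latter enveloping algebra is simple and hence $r=1$. You instead stay over $\Q$ throughout: the key observation that $\Q N$-modules are self-dual lets you show that \emph{every} $N$-invariant form --- in particular any invertible skew form in $\Forms(G)$ --- is block-diagonal on the homogeneous decomposition, so the decomposition is an honest symplectic system of imprimitivity and the wreath-product embedding lands in some $H \wr S_t$ with $H \le \Sp_{2d}(\Q)$, contradicting symplectic primitivity unless $t=1$. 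Your argument is more elementary (no extension of scalars, no Galois action on idempotents) and makes the role of symplectic primitivity completely explicit; the paper's argument is more algebraic and perhaps better suited to its ambient framework of $K$-linear representations, but for this lemma your approach is at least as clean. Your deductions of (2) and (3) from (1) are standard and correct.
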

\begin{proof}
The last two statements are immediate consequences of the first one. 

Let $K< \End(G)$ be a minimal totally complex subfield of degree $d$. Let $\{f_1, \dots , f_r\}$ and $\{e_1,\dots,e_s\}$ be the central primitive idempotents of the enveloping algebras $\left\langle N \right\rangle_\Q$ and $\left\langle G \right \rangle_{K}$ respectively. Let $\chi_i$ denote the character corresponding to a simple $\left\langle G \right \rangle_{K}e_i$ module and let $L= \Q(\chi_1,\dots,\chi_s) \subseteq K$ be their character field. Then $L/\Q$ is Galois and $e_i = \frac{\chi_i(1)}{\abs{G}} \sum_{g \in G} \chi_i(g^{-1}) g \in \left\langle G \right \rangle_{L}$. Since $G$ is irreducible, $\{e_1,\dots, e_s\}$ is a Galois orbit under $\Gal(L/\Q)$. For any $1 \le j \le r$ there exists some $i$ such that $e_if_j \ne 0$. Since $f_j \in \left\langle N\right\rangle_\Q$ is fixed under $\Gal(L/\Q)$, it follows that $e_if_j \ne 0$ for all $i,j$.
Denote by $\triangle_K\colon G \to \GL_{\frac{2n}{d}}(K)$ an irreducible representation of $G$ as in Lemma \ref{char_symp}. The enveloping algebra $\left\langle \triangle_K(G) \right \rangle_{K}$ is isomorphic to $\left\langle G \right \rangle_{K} e_i$ for some $i$. Now  $\{e_if_1, \dots, e_if_r\}$ is a set of central idempotents of $\left\langle N \right \rangle_{K} e_i \isom \left\langle \triangle_K(N) \right \rangle_{K}$. But since $G$ is symplectic primitive, $\triangle_K(G)$ is primitive as well. Therefore $\left\langle\triangle_K(N) \right\rangle_K$ is a simple algebra by the remark above. This shows $r=1$, since no $e_if_j$ vanishes.
\end{proof}

In particular, the candidates for $O_p(G)$ are well known by a theorem of Ph. Hall:

\begin{theorem}[Hall, \protect{\cite[Satz 13.10]{Huppert1}}]\label{Hall}
Suppose $P$ is a $p$-group such that every abelian characteristic subgroup of $P$ is cyclic. Then $P$ is the central product of two subgroups $E_1$ and $E_2$ where either
\begin{enumerate}
\item $p\ne 2$ and $E_1$ is extraspecial of exponent $p$ and $E_2$ is cyclic.
\item $E_1 \isom 2^{1+2m}_+$ and $E_2$ is cyclic, dihedral, quasi-dihedral or a generalized quaternion $2$-group. 
\item $E_1 = 2^{1+2m}_-$ and $E_2 = 1$.
\end{enumerate} 
\end{theorem}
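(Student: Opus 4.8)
The plan is to argue by induction on $\abs P$, the bridge between $P$ and its extraspecial ``core'' being a Thompson critical subgroup. Write $Z=Z(P)$; being characteristic abelian it is cyclic, and (we may assume $P\ne1$) $\langle z\rangle:=\Omega_1(Z)$ has order $p$. First I would fix a critical subgroup $C\unlhd P$: a characteristic subgroup with $[P,C]\le Z(C)$, with $C/Z(C)$ elementary abelian, and with $C_P(C)=Z(C)$. From $C_P(C)=Z(C)$ one gets $Z\le Z(C)$; since $Z(C)$ is again characteristic abelian in $P$ it is cyclic, with $\Omega_1(Z(C))=\langle z\rangle$. Also $[C,C]\le[P,C]\le Z(C)$, so $C$ has class $\le2$, and as $a^p\in Z(C)$ the class-$2$ identity $[a,c]^p=[a^p,c]=1$ shows every commutator of $C$ has order $\le p$; hence $C'\le\Omega_1(Z(C))=\langle z\rangle$, so $\abs{C'}\le p$. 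If $C$ is abelian I would finish at once: then $C=Z(C)$ is cyclic and self-centralising, so $P/C$ embeds in $\Aut(C)$, whose Sylow $p$-subgroup is cyclic for odd $p$ and of rank $\le2$ for $p=2$; combining this with the hypothesis (which forbids $C_p\times C_p$, and for $p=2$ also the modular group $M_{2^k}$ and $C_{2^{k-1}}\times C_2$ through their characteristic subgroup $\Omega_1\cong C_2\times C_2$) and the classification of $2$-groups with a cyclic subgroup of index $2$, one finds that $P$ is cyclic ($p$ odd) or cyclic, dihedral, quasi-dihedral, or generalised quaternion ($p=2$) --- the asserted conclusion with $E_1=1$, $E_2=P$.

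So suppose $C$ is non-abelian. Then $C'=\langle z\rangle$ has order exactly $p$, and the commutator map induces an alternating $\F_p$-bilinear form on $C/Z(C)$ with radical $C_C(C)/Z(C)=0$, so $C/Z(C)\cong\F_p^{2m}$ is a nondegenerate symplectic space. For $p$ odd, $x\mapsto x^p$ is a homomorphism $C\to Z(C)$ with image $\mho_1(C)$ and kernel $\Omega_1(C)$, both characteristic in $P$, and the crucial step is to show $\mho_1(C)\le\mho_1(Z(C))=Z(C)^p$. Otherwise $\mho_1(C)$ properly contains the unique index-$p$ subgroup of the cyclic group $Z(C)$, hence equals $Z(C)$; then $\abs{\Omega_1(C)}=\abs C/\abs{Z(C)}=p^{2m}$ and $\Omega_1(C)Z(C)\ne C$, so $\Omega_1(C)/\langle z\rangle$ has \emph{odd} dimension $2m-1$ and carries an alternating form, whose radical $Z(\Omega_1(C))/\langle z\rangle$ is therefore nonzero; this exhibits $Z(\Omega_1(C))$ as a characteristic (in $P$) elementary abelian subgroup of rank $\ge2$, contrary to hypothesis. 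Granting $\mho_1(C)\le Z(C)^p$, every $x\in C$ may be multiplied by a central element to acquire order $\le p$, so $\abs{\Omega_1(C)}=p^{2m+1}$, $\Omega_1(C)Z(C)=C$, and $E_1:=\Omega_1(C)$ is extraspecial of exponent $p$ with $Z(E_1)=\langle z\rangle$ and $C=E_1\ast Z(C)$.

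Next I would pass from $C$ to $P$ (still $p$ odd). As $E_1=\Omega_1(C)$ is characteristic in $P$ and $z\in Z$, we have $[P,E_1]\le Z(C)\cap E_1=\langle z\rangle$ and $[P,\langle z\rangle]=1$, so $P$ acts on $E_1$ only through automorphisms trivial on $E_1/Z(E_1)$ and on $Z(E_1)$; for an extraspecial group these form $\operatorname{Hom}(E_1/Z(E_1),Z(E_1))$, which has order $p^{2m}=\abs{\operatorname{Inn}(E_1)}$ and so, containing $\operatorname{Inn}(E_1)$, equals it. Hence $P=E_1\cdot C_P(E_1)$; put $E_2:=C_P(E_1)$, so that $P=E_1\ast E_2$ with $E_1\cap E_2=\langle z\rangle$, and $E_2$ --- the centraliser of a characteristic subgroup --- is characteristic in $P$ with $\abs{E_2}<\abs P$. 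Every characteristic abelian subgroup of $E_2$ is characteristic abelian in $P$, hence cyclic, so by induction $E_2=F_1\ast F_2$ with $F_1$ extraspecial of exponent $p$ and $F_2$ cyclic; since $[F_1,E_1]=1$, the group $E_1\ast F_1$ is again extraspecial of exponent $p$ and $P=(E_1\ast F_1)\ast F_2$ has the required form.

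For $p=2$ I would follow the same scheme, and this is where I expect the main obstacle. The squaring map is only quadratic, so the correct invariant of $C$ is a quadratic form on $C/Z(C)$ polarising to the symplectic form, whose Arf invariant distinguishes $2^{1+2m}_+$ from $2^{1+2m}_-$; one must still show that $C$ is a central product of an extraspecial group of the appropriate type with $Z(C)$, again by ruling out ``modular'' configurations --- now the argument that such a configuration yields a characteristic non-cyclic abelian subgroup of $P$ uses the classification of $2$-groups with a cyclic subgroup of index $2$, the group $M_{2^k}$ (with its characteristic $C_2\times C_2$) being the typical obstruction. A further difficulty is that for $p=2$ the extraspecial factor need not be canonically characteristic in $P$, so getting $P=E_1\ast C_P(E_1)$ requires choosing some such $E_1$ normal in $P$ (or a short analysis of the extension $1\to Z(C)\to P\to P/Z(C)\to1$ under the constraint $[P,C]\le Z(C)$). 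Finally I would settle the sign using $2^{1+2m}_-\ast C_4\cong2^{1+2m}_+\ast C_4$: if the cyclic-type factor $E_2$ is nontrivial it contains a $C_4$, so the extraspecial factor can be re-chosen of $+$-type, giving case (2), whereas the $-$-type can only survive when $E_2=1$, which is case (3).
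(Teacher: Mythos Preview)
The paper does not prove this theorem: it is quoted verbatim as a classical result of Philip Hall, with a citation to Huppert's \emph{Endliche Gruppen I} (Satz~III.13.10), and no argument is given. So there is no ``paper's own proof'' to compare against; the statement functions purely as an input to the classification.

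That said, your sketch is a reasonable outline of the standard proof. The odd-$p$ portion is essentially complete and correct: the critical-subgroup machinery, the argument that $\mho_1(C)\le Z(C)^p$ via the parity contradiction on the restricted symplectic form, the identification $E_1=\Omega_1(C)$ as extraspecial of exponent $p$, and the step $P=E_1\cdot C_P(E_1)$ via the count of automorphisms trivial on $E_1/Z(E_1)$ and $Z(E_1)$ are all sound. One small point: when you invoke induction on $E_2=C_P(E_1)$, you need that characteristic abelian subgroups of $E_2$ are cyclic. Your justification (``characteristic in $E_2$ implies characteristic in $P$ since $E_2$ is characteristic in $P$'') is correct, but it is worth saying explicitly that this uses only that automorphisms of $P$ restrict to automorphisms of $E_2$, not that every automorphism of $E_2$ extends.

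For $p=2$ you are, as you acknowledge, only sketching. The real work there lies in (i) showing that $C$ splits as an extraspecial group centrally amalgamated with $Z(C)$ --- the squaring map is quadratic rather than linear, so one must analyse the associated $\F_2$-quadratic form and rule out degenerate cases by producing forbidden characteristic $C_2\times C_2$'s --- and (ii) arranging the extraspecial factor to be normal in $P$, which is not automatic since $\Omega_1(C)$ need not be extraspecial when $p=2$. Your closing remark about $2^{1+2m}_-\ast C_4\cong 2^{1+2m}_+\ast C_4$ is the right way to normalise the sign once the decomposition is in hand. If you want a complete written proof, Huppert loc.\ cit.\ or Gorenstein, \emph{Finite Groups}, \S5.4 give the details.
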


The outline of the classification of all conjugacy classes of \simf\ subgroups of $\Sp_{2n}(\Q)$ is now as follows:
\begin{enumerate}
\item The symplectic imprimitive matrix groups come from the classifications of $\Sp_{2d}(\Q)$ where $d$ runs through all divisors of $n$. The wreath products $H \wr S_k$ for some \spimf\ $H < \Sp_{2d}(\Q)$ with $dk=n$ are most often maximal finite. (The only exception up to dimension $2n=22$ is $C_4 \wr S_2 < \Sp_{4}(\Q)$).

Suppose now $G < \Sp_{2n}(\Q)$ is \spimf{}.
\item There are only finitely many candidates for the Fitting subgroup $F(G)$ according to Hall's classification.
\item There are only finitely many candidates for the layer $E(G)$ (the central product of all subnormal quasisimple subgroups of $G$). These are described in \cite{HissMalle} which is based on the ATLAS \cite{atlas}. (Note that this step depends on the completeness of the classification of all finite simple groups. For small dimensions, one can also use the results of Blichfeldt, Brauer, Lindsey, Wales and Feit (see for example \cite{Feit2}) who classified the finite (quasiprimitive) subgroups of $\GL_m(\C)$ for $m \le 10$).
\item So there are only finitely many candidates for the generalized Fitting subgroup $F^*(G)$, i.e. the central product of $E(G)$ and $F(G)$. Since $G/F^*(G)$ is isomorphic to a subgroup of $\Out(F^*(G))$ it remains to construct all possible extensions of $F^*(G)$
 up to conjugacy in $\GL_{2n}(\Q)$.
\end{enumerate}
The last step is the crucial one. Although it is a cohomological task to find all abstract extensions $G$, this does not yield all matrix group extension of $F^*(G)$. A first step is to replace $F^*(G)$ by its generalized Bravais group as described below.

\subsection{Generalized Bravais groups}

Let $G< \Sp_{2n}(\Q)$ or $G < \GL_m(\Q)$ be maximal finite. If $N \unlhd G$ then clearly, $N \unlhd \B(N) :=\{g \in G \mid g \text{ centralizes } \End(N)\} \unlhd G$. 
The so-called generalized Bravais group $\B(N)$ can be constructed  independently of the group $G$ as described below. This was first introduced in \cite[page 82]{FRMG}.
This gives a general construction how to replace candidates for normal subgroups of $G$ by (hopefully) larger normal subgroups.

First, one applies the so called radical idealizer process to the $\Z$-order $\Lambda_0 := \left \langle N \right \rangle_\Z$ in the enveloping algebra $ \left \langle N \right \rangle_\Q$ of $N$. I.e. one iterates the following steps
\begin{itemize}
\item Let $I_i$ be the arithmetical radical of $\Lambda_i$, that is the intersection of all maximal right ideals of $\Lambda_i$ that contain the (reduced) discriminant of $\Lambda_i$.
\item Let $\Lambda_{i+1}$ be the right idealizer of $I_i$ in $ \left \langle N \right \rangle_\Q$.
\end{itemize}
This process stabilizes in a necessarily hereditary order $\Lambda_\infty$ see \cite[Theorems 39.11, 39.14 and 40.5]{Reiner}.

The natural $\Q N$-module $\Q^{1\times 2n}$ is isomorphic to a multiple of an irreducible $\Q N$-module $V$ according to Lemma \ref{mainlemma}.  Let $L_1,\dots,L_s$ represent the isomorphism classes of $\Lambda_\infty$-lattices in $V$ and fix some $F \in \tpForms(N)$. Then 
$$\B(N) = \{ g \in \left \langle N \right \rangle_\Q \mid L_ig = L_i \mbox{ for all $1\le i \le s$ and } g F \transpose{g} = F \}\:.$$

The group $\B(N)$ is finite, since it fixes a full lattice as well as a positive definite form.
By the double centralizer property $\B(N)$ and $N$ have the same commuting algebras and thus the same form spaces. In particular, the definition of $\B(N)$ does not depend on the choice of the form $F$. 
Moreover, $\B(N)$ has the following properties:

\begin{lemma}[\protect{\cite[Proposition II.10]{FRMG}}]\label{lemmabravais}
If $G$ is a \spimf\ subgroup of $\Sp_{2n}(\Q)$ and $N \unlhd G$ then
\begin{enumerate}
\item $N \unlhd \B(N) \unlhd G$
\item If $X$ is a finite subgroup of $\left \langle N \right \rangle_\Q^*$ such that $N \unlhd X$ then $X \le \B(N)$.
\end{enumerate} 
\end{lemma}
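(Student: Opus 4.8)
I would prove the stronger assertion (2) first and then deduce (1) from it using that $G$ is maximal finite. Write $A := \left\langle N\right\rangle_\Q$; by Lemma \ref{mainlemma} this is a simple $\Q$-algebra with simple module $V$ (unique up to isomorphism), and $\End(N)$ is its centraliser in $\Q^{2n\times 2n}$, so that $A^* = \left\langle N\right\rangle_\Q^* = \{g\in\GL_{2n}(\Q)\mid g\text{ centralises }\End(N)\}$ by the double centraliser theorem. One remark is used repeatedly: if $X$ is any group with $N\le X\le A^*$, then $\left\langle X\right\rangle_\Q = A$, hence $\End(X)=\End(N)$; if $X$ is moreover finite it carries a positive definite invariant form, so $\dim_\Q\Forms(X)=\dim_\Q\End(X)=\dim_\Q\End(N)=\dim_\Q\Forms(N)$, and since trivially $\Forms(X)\subseteq\Forms(N)$ the two form spaces coincide.

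For (2), let $X$ be finite with $N\unlhd X\le A^*$. By the remark the fixed form $F\in\tpForms(N)$ satisfies $xF\transpose{x}=F$ for all $x\in X$, which settles the form condition in the definition of $\B(N)$. It remains to show that each $x\in X$ fixes every $\Lambda_\infty$-lattice $L_i$. Since $N\unlhd X$, conjugation by $x$ permutes $N$ and hence fixes the order $\Lambda_0=\left\langle N\right\rangle_\Z$ setwise; the radical idealizer process is canonical --- the arithmetical radical $I_i$ and its right idealizer $\Lambda_{i+1}$ are built functorially from $\Lambda_i$ --- so by induction conjugation by $x$ fixes every $\Lambda_i$, hence also the hereditary order $\Lambda_\infty$. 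Consequently $x$ carries $\Lambda_\infty$-lattices in $V$ to $\Lambda_\infty$-lattices in $V$ and so permutes their finitely many isomorphism classes. That each $x$ in fact fixes the \emph{chosen} representatives $L_i$ is the delicate point, and it is the main obstacle of the proof: one combines the permutation statement just obtained with the facts that $x$ preserves the positive definite form $F$ and that lattices over the hereditary order $\Lambda_\infty$ are rigid, exactly as in \cite[p.~82ff.]{FRMG}. With this, $X\le\B(N)$.

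For (1), part (2) applied to $X=N$ gives $N\le\B(N)\le A^*$. To see $\B(N)\le G$, fix $F_G\in\tpForms(G)$ and $L\in\ZZ(G)$, and put $H:=\left\langle G,\B(N)\right\rangle$. Then $H$ fixes $F_G$ (both $G$ and $\B(N)$ do, the latter because $\tpForms(G)\subseteq\tpForms(N)=\tpForms(\B(N))$ by the remark applied to $X=\B(N)$) as well as the invertible skewsymmetric form $J_n\in\Forms(G)\subseteq\Forms(\B(N))$, and $H$ fixes the full lattice $L\Lambda_\infty$: this saturation is $G$-invariant because $G$ normalises $N$, hence $\Lambda_0$, hence $\Lambda_\infty$, and it is $\B(N)$-invariant because $\B(N)$ fixes the $\Lambda_\infty$-lattices that enter the construction of $\B(N)$. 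Thus $H$ is a finite symplectic group containing $G$, so $H=G$ by maximality, i.e.\ $\B(N)\le G$. Now $N\unlhd G$ together with $N\le\B(N)\le G$ gives $N\unlhd\B(N)$. Finally, $\B(N)\subseteq G\cap A^*$, and conversely every $h\in G\cap A^*$ lies in $\B(N)$ by applying (2) to $X:=\left\langle N,h\right\rangle$ (which is finite and has $N$ normal, since $h\in G$ normalises $N$); hence $\B(N)=G\cap A^*=G\cap\{g\in\GL_{2n}(\Q)\mid g\text{ centralises }\End(N)\}$, which is normal in $G$ because $G$ normalises $\End(N)$.
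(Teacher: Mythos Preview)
The paper does not supply a proof of this lemma; it simply cites \cite[Proposition~II.10]{FRMG} and moves on, so there is no in-paper argument to compare your approach against.

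Your plan is structurally sound and follows the natural route: establish (2) first, then derive (1) from maximality of $G$. The form-space step ($\Forms(X)=\Forms(N)$ via the dimension count and the double centraliser theorem) is correct, as is the identification $\B(N)=G\cap A^*$ at the end of (1), which immediately gives normality in $G$ once $\B(N)\le G$ is known.

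Two places deserve a word of caution. First, the step you yourself flag as ``delicate'' in (2) --- that a finite $X$ normalising $\Lambda_\infty$ and preserving $F$ actually fixes each chosen representative $L_i$, rather than merely permuting isomorphism classes of $\Lambda_\infty$-lattices --- is genuinely the crux, and you defer it to \cite{FRMG}. That is honest, but it means your write-up, like the paper itself, ultimately relies on that reference for the substantive content. Second, in (1) you assert that $\B(N)$ fixes $L\Lambda_\infty\subset\Q^{1\times 2n}$ ``because $\B(N)$ fixes the $\Lambda_\infty$-lattices that enter the construction of $\B(N)$''. But those representatives $L_i$ sit in the simple module $V$, not in $\Q^{1\times 2n}$. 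You need to insert the observation that, since $\Lambda_\infty$ is hereditary, the $\Lambda_\infty$-lattice $L\Lambda_\infty$ in $\Q^{1\times 2n}\cong V^k$ decomposes as a direct sum of $\Lambda_\infty$-lattices in the summands, each isomorphic to some $L_j$ via an element of $\End(N)$; since $\End(N)$ centralises $\B(N)\subset A^*$, invariance under $\B(N)$ transfers across such isomorphisms. With that filled in, your argument for (1) goes through.
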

By the double centralizer property, it follows that if $G$ is \spimf\ then $\B(N) = \{g \in G \mid\text{ centralizes } \End(N) \}$ as claimed in the beginning of this section.

The candidates of $O_p(G)$ for some irreducible symplectic primitive $G< \GL_{2n}(\Q)$ are well known by Theorem \ref{Hall}. Their generalized Bravais groups have been determined generically as follows.
\begin{prop}\cite[Chapter 8]{FQMG}
The candidates for $N = O_p(G)$ and their generalized Bravais groups for an irreducible symplectic primitive matrix groups $G$ are given by 
$$
\begin{array}{|l|c|c|c|} \hline
N                                       & \B(N)                 & \dim_\Q(\left \langle N\right\rangle_\Q) & \End(N)    \\ \hline
C_{p^m}                                 & \pm N                 & p^{m-1}(p-1)          & \Q(\zeta_{p^m})       \\ \hline
p_+^{1+2n},\quad (p>2)                  & \pm N.\Sp_{2n}(p)     & p^n(p-1)              & \Q(\zeta_p)           \\ \hline
2_+^{1+2n}                              & N.O^+_{2n}(2)         & 2^n                   & \Q                    \\ \hline
2_-^{1+2n}                              & N.O^-_{2n}(2)         & 2^{n+1}               & Q_{\infty, 2}       \\ \hline
p_+^{1+2n}\Ydown C_{p^m}, \quad (m>1)       & \pm N.\Sp_{2n}(p)     & p^{m+n-1}(p-1)        & \Q(\zeta_p)           \\ \hline
2_+^{1+2n}\Ydown D_{2^m}, \quad (m>3)       & N.\Sp_{2n}(2)         & 2^{n+m-2}             & \Q(\theta_{2^{m-1}})  \\ \hline
2_+^{1+2n}\Ydown Q_{2^m}, \quad (m>3)       & N.\Sp_{2n}(2)         & 2^{n+m-1}             & Q_{\theta_{2^{m-1}}, \infty}  \\ \hline
2_+^{1+2n}\Ydown QD_{2^m}, \quad (m>3)      & N.\Sp_{2n}(2)         & 2^{n+m-2}             & \Q(\zeta_{2^{m-1}} - \zeta_{2^{m-1}}^{-1})\\ \hline
\end{array}
$$
where $\theta_k = \zeta_k + \zeta_k^{-1}$ generates the maximal totally real subfield of the cyclotomic field $\Q(\zeta_k)$ and $Q_{\theta_{2^{m-1}}, \infty} \isom Q_{\infty,2} \tensordown{\Q} \Q(\theta_{2^{m-1}})$ denotes the quaternion algebra with center $\Q(\theta_{2^{m-1}})$ ramified only at the infinite places.

Note however that $\B(N)$ is only correct under the assumption that $N\unlhd G$.
\end{prop}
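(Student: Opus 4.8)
The plan is to treat each of the eight rows of the table separately, since the cases are structurally similar but the endomorphism algebras differ. In every case I would first pin down the enveloping algebra $\langle N\rangle_\Q$ and its centre, compute $\dim_\Q\langle N\rangle_\Q$ and $\End(N)$ (the last two columns), and only then compute the generalized Bravais group $\B(N)$. For the dimension and endomorphism algebra, note that Lemma~\ref{mainlemma} forces the natural module to be a multiple of a single irreducible $\Q N$-module $V$; its dimension times the number of copies is $2n$, so $\dim_\Q\langle N\rangle_\Q=\dim_\Q(V)\cdot\dim_{\End_{\Q N}(V)}(\langle N\rangle_\Q)^{1/2}$ in the obvious way, and for the groups in question $\langle N\rangle_\Q$ is a full matrix ring over a field or quaternion algebra whose identity is read off from the character theory of $N$. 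For the cyclic groups $C_{p^m}$ this is the classical fact that the faithful irreducible rational representation has enveloping algebra $\Q(\zeta_{p^m})$ of degree $p^{m-1}(p-1)$; for the (central products with) extraspecial groups one uses that an extraspecial $p$-group $p^{1+2n}_{\pm}$ has a unique faithful irreducible complex representation of degree $p^n$, and its rational (or real) enveloping algebra is determined by the Frobenius--Schur indicator, giving $\Q(\zeta_p)$ in the odd case, $\Q$ for $2^{1+2n}_+$ and the rational quaternion algebra $Q_{\infty,2}$ ramified exactly at $2$ and $\infty$ for $2^{1+2n}_-$. The central-product rows $p^{1+2n}_+\Ydown C_{p^m}$, $2^{1+2n}_+\Ydown D_{2^m}$, etc.\ combine these: the centre of the extraspecial factor is identified with a subgroup of the cyclic/dihedral/quaternion factor, and $\langle N\rangle_\Q$ becomes a matrix ring over the compositum, explaining the entries $\Q(\theta_{2^{m-1}})$, $Q_{\theta_{2^{m-1}},\infty}$ and $\Q(\zeta_{2^{m-1}}-\zeta_{2^{m-1}}^{-1})$.

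Next, for $\B(N)$ itself, I would use the intrinsic description given just before the proposition: run the radical idealizer process on $\langle N\rangle_\Z$ to obtain a hereditary order $\Lambda_\infty$, take representatives $L_1,\dots,L_s$ of the $\Lambda_\infty$-lattices in $V$, fix $F\in\tpForms(N)$, and set $\B(N)=\{g\in\langle N\rangle_\Q \mid L_ig=L_i \text{ for all } i,\ gF\transpose{g}=F\}$. By Lemma~\ref{lemmabravais}(2), $\B(N)$ is the unique maximal finite subgroup of $\langle N\rangle_\Q^*$ containing $N$ (once we know it is finite, which it is because it stabilizes a lattice and a positive definite form), so the task reduces to identifying that maximal finite subgroup. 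For $N=C_{p^m}$ the unit group $\langle N\rangle_\Q^*=\Q(\zeta_{p^m})^*$ has maximal finite subgroup $\langle -\zeta_{p^m}\rangle=\pm N$, giving the first row. For the extraspecial and almost-extraspecial cases the key input is the classical theory of the normalizer of an extraspecial group inside its enveloping algebra: $\B(N)/Z$ acts faithfully and symplectically (resp.\ orthogonally) on $N/Z\cong\F_p^{2n}$ preserving the commutator form, which identifies $\B(N)$ with a (possibly split) extension $\pm N.\Sp_{2n}(p)$, $N.O^{\pm}_{2n}(2)$, or $N.\Sp_{2n}(2)$ as listed; one must check that the whole of the relevant classical group is actually realized, which follows because $\Sp_{2n}(p)$ (resp.\ $O^{\pm}_{2n}(2)$) acts on the standard lattice over $\langle N\rangle_\Z$ fixing a positive definite form. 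The conditions $m>1$ and $m>3$ in the lower rows ensure the central-product amalgamation is nontrivial and that no extra automorphisms (beyond the symplectic ones) appear, so the same argument goes through with $\Sp_{2n}(p)$ in place of $\pm N.\Sp_{2n}(p)$ when the cyclic/dihedral factor already supplies the $-1$.

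The main obstacle, in my estimation, is not the cyclic case or the dimension count but verifying precisely which classical group occurs and in what (split or nonsplit) extension for the almost-extraspecial rows: one must show both that every symplectic/orthogonal transformation of $N/Z$ lifts to an element of $\langle N\rangle_\Q^*$ normalizing $N$ and fixing a suitable positive definite form, and that nothing larger can occur. The lifting direction is the delicate one: it amounts to exhibiting, for each generator of $\Sp_{2n}(p)$ or $O^{\pm}_{2n}(2)$, an explicit unit of the enveloping algebra (a product of "generalized Weil representation" operators) that conjugates $N$ correctly and is unitary with respect to the $N$-invariant form, and then arguing the group they generate is all of the claimed extension by comparing orders. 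For the central-product cases one additionally has to be careful that the chosen positive definite $F$ is invariant under the lifted operators, which is where the precise structure of $\End(N)$ (real vs.\ complex vs.\ quaternionic) enters and dictates whether $\pm N$ or just $N$ sits at the bottom. I would organize the writeup as: (i) the cyclic row, (ii) the three pure extraspecial rows, (iii) the four almost-extraspecial rows by reduction to (ii) via the identification of centres, with the final remark that correctness of $\B(N)$ presupposes $N\unlhd G$ already folded into the statement.
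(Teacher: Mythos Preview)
The paper does not give a proof of this proposition at all: it is quoted verbatim from \cite[Chapter~8]{FQMG} and left as a citation. So there is no ``paper's own proof'' to compare against; your proposal is in effect a sketch of what such a proof would look like, and as a road map it is sound and matches the standard treatment (compute $\langle N\rangle_\Q$ from the character theory of extraspecial and almost-extraspecial groups, then identify $\B(N)$ as the largest finite normalizing overgroup inside $\langle N\rangle_\Q^*$ using Lemma~\ref{lemmabravais}). In fact the paper itself later re-derives two of the rows by essentially the method you outline: Theorem~\ref{e2plus} constructs $\B(2_+^{1+2n})\cong 2_+^{1+2n}.O_{2n}^+(2)$ via Wall's lattices $L_n,L_n'$, and Section~4.5 handles $p_+^{1+2n}$ for odd $p$ through the tensor construction and \cite{Wall,Winter}.

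Two small remarks on precision. First, your appeal to Lemma~\ref{lemmabravais}(2) to conclude that $\B(N)$ is ``the unique maximal finite subgroup of $\langle N\rangle_\Q^*$ containing $N$'' overshoots slightly: the lemma only covers finite $X$ with $N\unlhd X$, so one still needs that $N$ is normal in any finite overgroup inside $\langle N\rangle_\Q^*$, which is not automatic and is precisely where the caveat ``$\B(N)$ is only correct under the assumption that $N\unlhd G$'' in the statement comes from. Second, the lifting of all of $\Sp_{2n}(p)$ or $O_{2n}^\pm(2)$ to genuine form-preserving units of the enveloping algebra --- which you correctly flag as the delicate point --- is exactly what \cite{Wall} and \cite{Winter} supply, and the paper invokes those references rather than reproving it; your write-up should do the same rather than attempt the explicit Weil-operator construction.
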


The following example shows how the generalized Bravais group can be used to eliminate possible candidates for normal subgroups.
\begin{example}
Let $q \equiv \pm 3 \mod 8$ be a prime power. Suppose $N < \GL_{q}(\Q)$ is isomorphic to $\LGruppe_2(q)$ such that the natural character of $N$ is the Steinberg character. Then $\B(N)$ is conjugate to the automorphism group $\Aut(A_{q}) \isom S_{q+1}$ of the root lattice $A_q$. In particular, there exists no \spimf\ $G < \GL_{kq}(\Q)$ that contains a normal subgroup isomorphic to $\LGruppe_2(q)$ such that its natural character is $k$-times the Steinberg character.
\end{example}
\begin{proof}
The natural representation of $N$ is the $q$-dimensional summand of the permutation module of $\LGruppe_2(q)$ on the projectice space $\PS(\F_q)$. Hence one can assume that $N$ fixes the root lattice $L:= A_q$.

Since $q\ne 7,8$ the group $\Aut(A_q)$ is a maximal finite subgroup of $\GL_q(\Q)$ (see \cite{Burnside}). So it suffices to check that $N$ and $\Aut(A_q)$ have the same invariant lattices. Let $\ell$ be a prime divisor of $\abs{N} = \frac{1}{2}(q-1)q(q+1)$.
If $\ell$ divides $q(q-1)$ then the decomposition matrix of $\LGruppe_2(q)$ \cite{PSL} shows that $L/pL$ is an irreducible $\F_\ell \LGruppe_2(q)$-module.

Now $S_{q+1}$ and $\LGruppe_2(q)$ both act 2-transitively on $\PS(\F_q)$. If $\ell\ne 2$ divides $q+1$ then the corresponding $\F_\ell$-modular representations have $2$ composition factors (again from \cite{PSL}). Hence it follows from \cite[Theorem 5.1]{NTA} that $N$ and $\Aut(A_q)$ have the same invariant lattices. Thus $\B(N) = \Aut(A_q)$ as claimed. The second statement follows immediately from the definition of generalized Bravais groups and Lemma \ref{lemmabravais}.
\end{proof}

\section{Some infinite families}\label{secfam}

\subsection{Some subgroups of $\Sp_{p-1}(\Q)$}

Let $p\ge 5$ be a prime and write $p-1 = 2^a \cdot o$ with $o$ odd. In the spirit of \cite[Chapter V]{FRMG} this section describes all \simf\ supergroups $G$ of $C_p$ in dimension $p-1$ where $C_p$ denotes the (up to conjugacy) unique cyclic matrix group of order $p$ in $\GL_{p-1}(\Q)$.

Clearly, one possibility is that $G$ contains a normal subgroup conjugate to $C_p$. Since the commuting algebra of $C_p$ is isomorphic to $\Q(\zeta_p)$ it follows that $G/\pm C_p \le C_{2^a} \times C_o$. By Galois theory, $G$ is symplectic if and only if $G/\pm C_p \le C_{o}$ and therefore $G \isom \pm C_p \DP C_o$ by maximality. (The group $\pm C_p \DP C_o$ has only one irreducible rational representation of degree $p-1$ and therefore it can be identified with this representation.)

Another class of candidates are extensions of $\LGruppe_2(p)$. The smallest faithful irreducible complex representations of $\LGruppe_2(p)$ are of degree $\frac{p-1}{2}$ and algebraically conjugate. The corresponding character field is $\Q(\sqrt{\pm p})$ with the $-$~sign if and only if $p \equiv -1\mod 4$ (see \cite{SchurPSL}).

If $p \equiv -1 \mod 4$ then $\LGruppe_2(p)$ contains a subgroup $U$ isomorphic to $C_p \DP C_{\frac{p-1}{2}}$. The restriction of the natural representation of $\LGruppe_2(p)$ on $U$ is irreducible and has the same character field $\Q(\sqrt{-p})$ (\cite{SchurPSL}). By \cite[Satz 1.2.1]{Lorenz}, the Schur index of $\LGruppe_2(p)$ is equal to the Schur index of $U$ which is $1$. Thus $C_2 \times \LGruppe_2(p)$ has a unique $p-1$~dimensional rationally irreducible representation (denoted by ${}_{\sqrt{-p}}[\pm \LGruppe_2(p)]_{\frac{p-1}{2}}$ in the sequel) with commuting algebra $\Q(\sqrt{-p})$.

The next result shows that there are no further possibilities. More precisely:

\begin{theorem}\label{pminus1}
Let $p\ge 5$ be prime and $G < \Sp_{p-1}(\Q)$ such that $p$ divides the order $\abs{G}$.  Write $p-1=2^a\cdot o$ with $o$ odd.\\
Then $G$ is \simf\ if and only if $G$ is conjugate to
$
\begin{cases}
\pm C_p \DP C_{o} & \text{ if } p \equiv +1\mod 4\\
{}_{\sqrt{-p}}[\pm \LGruppe_2(p)]_{\frac{p-1}{2}} & \text{ if } p \equiv -1 \mod 4
\end{cases}
$.
\end{theorem}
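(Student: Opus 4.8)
The plan is to show that a \simf\ group $G < \Sp_{p-1}(\Q)$ whose order is divisible by $p$ must have a normal subgroup which is either cyclic of order $p$ or isomorphic to $\LGruppe_2(p)$, and then to appeal to Lemma \ref{lemmabravais} together with the discussion preceding the theorem to pin down $G$ exactly. First I would let $P \in \Syl_p(G)$; since $p \mid \abs{G}$ and $G < \GL_{p-1}(\Q)$, the $p$-part of $\abs{G}$ is exactly $p$ (a faithful $\Q$-representation of a group of order $p^k$ has degree at least $p^{k-1}(p-1)$), so $P = C_p$. Thus the natural representation restricted to $P$ is, up to conjugacy, the unique faithful $(p-1)$-dimensional rational representation of $C_p$, and in particular $G$ is rationally irreducible. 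The commuting algebra $\End(P)$ is $\Q(\zeta_p)$, a field of degree $p-1 = \dim_\Q\Q^{(p-1)\times(p-1)}/\dim$ \dots\ — more precisely $\langle P\rangle_\Q$ is already all of the enveloping algebra needed, so the $p$-parameter (here: one-parameter, since $\End(P)$ is a field) argument of Section \ref{secmet} applies to the subgroup $P$.

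Next I would analyze $N := O_p(G')$ or rather the normal closure of $P$ in $G$. By a Sylow/transfer argument (Burnside's normal $p$-complement theorem fails in general, so instead) I would look at $N_G(P)/C_G(P)$, which embeds into $\Aut(C_p) \cong C_{p-1}$, and distinguish two cases according to whether $P \unlhd G$. If $P \unlhd G$, then since $\End(P) \cong \Q(\zeta_p)$ and $G$ is symplectic, Galois theory (exactly as in the paragraph before the theorem, using Lemma \ref{char_symp}) forces $G/(\pm C_p)$ into the odd-order part $C_o$ of $\Gal(\Q(\zeta_p)/\Q)$; maximality then gives $G \cong \pm C_p \DP C_o$, and this case occurs regardless of $p \bmod 4$ — one must still rule it out when $p \equiv -1 \bmod 4$ by exhibiting the strictly larger group ${}_{\sqrt{-p}}[\pm\LGruppe_2(p)]_{\frac{p-1}{2}}$ containing a conjugate of it, so $\pm C_p \DP C_o$ is \emph{not} maximal there. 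If $P \not\unlhd G$, then the normal closure $\langle P^G\rangle$ is a normal subgroup in which $P$ is not normal, and one invokes the classification of groups with a non-normal Sylow $p$-subgroup of order $p$ acting irreducibly in degree $p-1$ — these are essentially the groups with a strongly $p$-embedded subgroup, and in characteristic zero in this degree the relevant simple composition factor is forced to be $\LGruppe_2(p)$ (this is where one uses \cite{HissMalle} or the Blichfeldt–Brauer–Feit line of results, or a direct argument via $\Sp_{p-1}(p)$-type constructions). One then shows $\LGruppe_2(p) \unlhd G$ by a generalized-Bravais argument: $\B(\LGruppe_2(p))$ is computed from its invariant lattices, and since the Schur index is $1$ with commuting algebra $\Q(\sqrt{-p})$ (only for $p \equiv -1 \bmod 4$ is this field totally complex, matching symplecticity), $\B(\LGruppe_2(p))$ is conjugate to ${}_{\sqrt{-p}}[\pm\LGruppe_2(p)]_{\frac{p-1}{2}}$, which by maximality equals $G$.

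Finally I would verify the converse, i.e.\ that the two listed groups are genuinely \simf: for $\pm C_p \DP C_o$ when $p \equiv 1 \bmod 4$ one checks there is no larger finite symplectic group — any such would either normalize $C_p$ (handled by the Galois argument above, which is tight when $a = 1$, i.e.\ when the $2$-part of $p-1$ is just $2$, and otherwise one uses that $C_{2^a}$ for $a \ge 2$ would make the commuting algebra totally real, contradicting symplecticity) or contain $\LGruppe_2(p)$ with character field $\Q(\sqrt{p})$, which is totally real, again contradicting that $G$ is symplectic. For ${}_{\sqrt{-p}}[\pm\LGruppe_2(p)]_{\frac{p-1}{2}}$ when $p \equiv -1 \bmod 4$ one uses that its only possible overgroups would involve $\Out(\LGruppe_2(p)) = C_2$ or a larger $p$-group, both excluded. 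The main obstacle is the $P \not\unlhd G$ case: ruling out spurious quasiprimitive configurations and showing $\LGruppe_2(p)$ (rather than some other group with a $p$-element acting with a single nontrivial eigenvalue-orbit, or a product/extension thereof) is the \emph{only} surviving quasisimple layer, and that no extra outer automorphisms or central $2$-groups can be adjoined while staying symplectic and maximal — this is exactly the kind of finite case-analysis on $F^*(G)$ that the introduction flags as delicate, here made tractable by the smallness of the $p$-part and the rigidity of the commuting algebra.
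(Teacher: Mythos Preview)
Your overall architecture matches the paper's: reduce to $|P|=p$, split on whether $P\unlhd G$, handle the normal case by Galois theory on $\Q(\zeta_p)$, and in the non-normal case identify $\LGruppe_2(p)$. But the decisive step in the paper is one you do not have. The paper observes that $C_G(P)\le \End(P)^* \cap G = \Q(\zeta_p)^*\cap G = \pm P$, so $Z(G)\le \pm P$, and that $G$ has a faithful irreducible \emph{complex} character of degree $\le \frac{p-1}{2}$ (since the symplectic commuting algebra forces at least two Galois-conjugate constituents). Then it invokes \emph{Blau's theorem} \cite[VIII, Theorem 7.2]{Feit}: for a finite group with cyclic Sylow $p$-subgroup $P$ of order $p$ and a faithful irreducible complex representation of degree $<p-1$, either $P\unlhd G$ or $G/Z(G)\cong \LGruppe_2(p)$. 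This single citation replaces your entire ``main obstacle'' paragraph.

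Your proposed substitutes do not close the gap. Appealing to \cite{HissMalle} presupposes that $E(G)\ne 1$, i.e.\ that some quasisimple subnormal subgroup exists; but a priori $G$ could be solvable with $P\not\unlhd G$, and you have not excluded this. The ``strongly $p$-embedded'' route is in principle viable but is itself a deep classification (essentially Bender's theorem plus CFSG-level input) and is far heavier than what is needed here. The generalized-Bravais computation you sketch for $\LGruppe_2(p)$ is also unnecessary once Blau's theorem is in hand: the paper simply reads off from the generic character table of $\SL_2(p)$ that a $(p-1)/2$-dimensional complex character with totally complex character field forces $p\equiv -1\pmod 4$ and excludes $\SL_2(p)$ by Schur index, leaving $\pm\LGruppe_2(p)$. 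Finally, the paper disposes of $p=5,7$ separately by the $2$-parameter argument (Corollary~\ref{mparam}), so that Blau's theorem is only invoked for $p\ge 11$; you should not expect the uniform argument to go through for the smallest primes without some explicit check.
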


To state the proof, an upper bound on $\abs{G}$ is needed.
\begin{lemma}[Minkowski's bound, \cite{Minkowski}]\label{Minkowski}
The least common multiple of the orders of all finite subgroups of $\GL_n(\Q)$ is given by
$$\prod_{p} p^{ \left\lfloor \frac{n}{(p-1)}\right\rfloor + \left\lfloor \frac{n}{p(p-1)}\right\rfloor + \left\lfloor \frac{n}{p^2 (p-1)}\right\rfloor + \dots}$$
where the product is taken over all primes $p \le n+1$.
\end{lemma}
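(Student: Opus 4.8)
The plan is to determine, for each prime $p$, the largest power $p^{e_p}$ dividing the order of any finite subgroup of $\GL_n(\Q)$; the asserted least common multiple is then $\prod_p p^{e_p}$, and I will show $e_p = M(n,p)$ where $M(n,p) := \lfloor n/(p-1)\rfloor + \lfloor n/(p(p-1))\rfloor + \lfloor n/(p^2(p-1))\rfloor + \cdots$. Since $M(n,p) = 0$ as soon as $p-1 > n$, only primes $p \le n+1$ contribute, matching the stated range. Throughout I conjugate a given finite $G < \GL_n(\Q)$ into $\GL_n(\Z)$ (possible since $G$ fixes a full lattice) and write $v_p$ for the $p$-adic valuation. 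The basic tool is that the kernel of reduction $\GL_n(\Z) \to \GL_n(\Z/\ell)$ is torsion free for every prime $\ell \ge 3$; hence $G$ embeds into $\GL_n(\F_\ell)$ and $\abs{G}$ divides $\abs{\GL_n(\F_\ell)} = \ell^{n(n-1)/2}\prod_{j=1}^n(\ell^j-1)$ for all such $\ell$, so the relevant supremum is finite.

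For the upper bound at an odd prime $p$, I would use Dirichlet's theorem to pick a prime $\ell \ge 3$, $\ell \ne p$, that is a primitive root modulo $p^2$. Then the multiplicative order of $\ell$ modulo $p$ is $p-1$, so $p \mid \ell^j - 1$ exactly when $(p-1) \mid j$, and for $j = (p-1)t$ the lifting-the-exponent lemma together with $v_p(\ell^{p-1}-1)=1$ gives $v_p(\ell^j - 1) = 1 + v_p(t)$. Summing over $1 \le t \le \lfloor n/(p-1)\rfloor$, applying Legendre's formula to $v_p(\lfloor n/(p-1)\rfloor!)$, and using the floor identity $\lfloor \lfloor n/(p-1)\rfloor/p^i\rfloor = \lfloor n/(p^i(p-1))\rfloor$ yields $v_p(\abs{\GL_n(\F_\ell)}) = M(n,p)$, hence $v_p(\abs{G}) \le M(n,p)$ for every finite $G$.

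The prime $p=2$ is the main obstacle, because the same reduction is not sharp: a short computation with the $2$-adic lifting-the-exponent lemma shows that $\min_{\ell}v_2(\abs{\GL_n(\F_\ell)})$ exceeds $M(n,2)$ by exactly $\lfloor n/2\rfloor$. To recover the sharp bound I would exploit that $G$ also fixes a positive definite symmetric form; after clearing denominators it fixes an integral form $F$, so for a prime $\ell \equiv 3 \pmod 8$ with $\ell \nmid 2\det F$ the reduction lands in the finite orthogonal group $\mathrm{O}(\bar F)(\F_\ell)$ rather than all of $\GL_n(\F_\ell)$. Using $\abs{\mathrm{O}^{\pm}_{2m}(\F_\ell)} = 2\ell^{m(m-1)}(\ell^m \mp 1)\prod_{i=1}^{m-1}(\ell^{2i}-1)$ and the analogous odd-dimensional formula, together with $v_2(\ell^{2i}-1) = 3 + v_2(i)$ for $\ell \equiv 3 \pmod 8$, one checks that $v_2(\abs{\mathrm{O}(\bar F)(\F_\ell)}) \le M(n,2)$ for either Witt type, giving $v_2(\abs{G}) \le M(n,2)$.

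It remains to realize each bound, and here a single uniform construction suffices. Let $m = \lfloor n/(p-1)\rfloor$ and embed $C_p$ into $\GL_{p-1}(\Q)$ by its faithful rational irreducible representation (the action on $\Q(\zeta_p)$); then take the wreath product $C_p \wr S_m < \GL_{p-1}(\Q) \wr S_m < \GL_{m(p-1)}(\Q)$, padded by the identity to land in $\GL_n(\Q)$ since $m(p-1) \le n$. Its order is $p^m \cdot m!$, so Legendre's formula and the same floor identity give $v_p(p^m m!) = m + v_p(m!) = M(n,p)$; for $p=2$ this is just the signed permutation group $C_2 \wr S_n$ of order $2^n n!$. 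Thus for each $p$ some finite subgroup attains $v_p = M(n,p)$ while none exceeds it, so the least common multiple of all orders is $\prod_{p \le n+1} p^{M(n,p)}$, as claimed. The one genuinely delicate point, and the part I expect to require the most care, is the sharp upper bound at $p=2$, where the generic reduction into $\GL_n(\F_\ell)$ must be replaced by reduction into the orthogonal group of an invariant form.
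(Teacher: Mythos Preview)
The paper does not prove this lemma; it merely states it with a citation to Minkowski and uses it as a black box, so there is nothing in the paper to compare your argument against. Your proposal is the classical Minkowski--Schur argument and is correct: the upper bound at odd $p$ via reduction modulo a prime $\ell$ that is a primitive root modulo $p^2$, the sharpening at $p=2$ by reducing into the orthogonal group of an invariant form with $\ell\equiv 3\pmod 8$, and the lower bound via the wreath products $C_p\wr S_{\lfloor n/(p-1)\rfloor}$ (respectively $C_2\wr S_n$) are exactly the standard ingredients, and your valuation computations check out in each case.
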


\begin{proof}[Proof of Theorem \ref{pminus1}]
The cases $p=5$ and $7$ can be checked explicitly using the 2-parameter argument (see Corollary \ref{mparam}). Let $p \ge 11$ be prime.
Let $\pm I_{p-1} < G < \Sp_{p-1}(\Q)$ such that $p$ divides $\abs{G}$. Further let $P \in \Syl_p(G)$. Then by Minkowski's bound, $\abs{P} = p$. 
So the commuting algebra of $P$ is isomorphic to $\Q(\zeta_p)$. Thus $\pm I_{p-1} \le Z(G) \le C_G(P) = \pm P$. Since $G$ must have a faithful irreducible complex character of degree $\le \frac{p-1}{2}$ it follows from a theorem of H. Blau \cite[VIII Theorem 7.2]{Feit} that either $P\unlhd G$ or $G/Z(G) \isom \LGruppe_2(p)$. In the first case, $G$ is conjugate to a subgroup of $\pm C_{p} \DP C_o$. In the second case, $Z(G) = \pm P$ would imply that $G=\pm P$ since $\pm P$ is self-centralizing in $G$. So $Z(G) = \pm I_{p-1}$. Since $\LGruppe_2(p)$ is perfect, $G$ is either isomorphic to $\pm \LGruppe_2(p)$ or $\SL_2(p)$. By the generic character table of $\LGruppe_2(p)$ (see \cite{SchurPSL}) this implies $p \equiv -1 \mod 4$. But then the real Schur indices of the $\frac{p-1}{2}$ dimensional complex characters of $\SL_2(p)$ are $2$. Thus $G$ is conjugate to ${}_{\sqrt{-p}}[\pm \LGruppe_2(p)]_{\frac{p-1}{2}}$. Since ${}_{\sqrt{-p}}[\pm \LGruppe_2(p)]_{\frac{p-1}{2}}$ contains a subgroup conjugate to $\pm C_p \DP C_o$ the result follows.
\end{proof}

\subsection{Some subgroups of $\Sp_{p+1}(\Q)$}

Let $p \ge 5$ be a prime. If $p \equiv -1\mod 4$ then $G:= \SL_2(p)$ has only two algebraically conjugate complex representations of degree $\frac{p+1}{2}$ as the generic character table \cite{SchurPSL} shows. Let $\chi$ denote one of the corresponding characters and let $P \in \Syl_p(G)$. An explicit calculation shows $(1_P^G, \chi)_G = (1_P, \chi\vert_P)_P =1$. Thus (by \cite[Corollary 10.2(c)]{Isaacs}) $\chi$ is realizable over its character field, which is $\Q(\sqrt{-p})$. So $\chi$ gives rise to a subgroup of $\Sp_{p+1}(\Q)$ denoted by ${}_{\sqrt{-p}}[\SL_2(p)]_\frac{p+1}{2}$.

\begin{theorem}\label{pplus1}
Let $p\ge 11$ be prime and $G < \Sp_{p+1}(\Q)$ such that $p$ divides $\abs{G}$.
Then $G$ is \simf\ if and only if $p \equiv -1\mod 4$ and $G$ is conjugate to ${}_{\sqrt{-p}}[\SL_2(p)]_\frac{p+1}{2}$.
\end{theorem}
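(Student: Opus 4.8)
The plan is to follow the same template as the proof of Theorem~\ref{pminus1}, with $\Sp_{p-1}(\Q)$ replaced by $\Sp_{p+1}(\Q)$. So let $\pm I_{p+1} < G < \Sp_{p+1}(\Q)$ with $p \mid \abs{G}$ and fix $P \in \Syl_p(G)$. First I would invoke Minkowski's bound (Lemma~\ref{Minkowski}): since $p \ge 11$, the exponent of $p$ in the Minkowski number for $\GL_{p+1}(\Q)$ is $\lfloor (p+1)/(p-1)\rfloor = 1$, so $\abs{P} = p$. As in the proof of Theorem~\ref{pminus1}, the commuting algebra of $P$ is $\Q(\zeta_p)$, which has $\Q$-dimension $p-1$; hence the rational span of $P$ accounts for $p-1$ of the $p+1$ dimensions, and $C_G(P) = \pm P \times C$ where $C$ acts faithfully (and rationally irreducibly) on a complementary $2$-dimensional space. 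This already forces $C$ to be cyclic of order $1$, $2$, $3$, $4$ or $6$ (the cyclic matrix groups in $\GL_2(\Q)$); in particular $O_p(G) \ne C_G(P)$ would have abelian self-centralizing structure $\pm C_p \times C$.

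Next I would bring in Blau's theorem \cite[VIII Theorem 7.2]{Feit}: $G$ has a faithful irreducible complex character of degree $\tfrac{p+1}{2} < p-1$, so either $P \unlhd G$ or $G/Z(G) \isom \LGruppe_2(p)$. The first case must be excluded: if $P \unlhd G$ then $G \le N_{\GL_{p+1}(\Q)}(\pm P)$, and since the enveloping algebra of $\pm P$ on its $(p-1)$-dimensional constituent is the field $\Q(\zeta_p)$, the group $G$ would normalize this splitting and act on the $(p-1)$-dimensional piece through $\pm C_p \DP (\text{something in } (\Z/p)^\times)$ and on the remaining $2$-dimensional piece through a cyclic group; but then $\pm P$ is not a multiple of a single rationally irreducible constituent on $\Q^{1\times(p+1)}$, contradicting Lemma~\ref{mainlemma}(1) (the natural character of the normal subgroup $\pm P$ is not a multiple of a single rationally irreducible character, since it mixes a $(p-1)$-dimensional faithful constituent with a trivial-on-$P$ $2$-dimensional constituent). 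Hence $G/Z(G) \isom \LGruppe_2(p)$, and since $\LGruppe_2(p)$ is perfect and $\pm I_{p+1} \le Z(G)$, either $G \isom \LGruppe_2(p)$, $G \isom \pm \LGruppe_2(p)$, or $G \isom \SL_2(p)$ (the Schur cover); the faithful $\tfrac{p+1}{2}$-dimensional complex characters of $\LGruppe_2(p)$ itself would give $G \isom \LGruppe_2(p)$ but these have $-1 \notin Z(G)$, and comparing with the requirement $\pm I_{p+1} \in G$ and with the generic character table \cite{SchurPSL} one sees the relevant degree-$\tfrac{p+1}{2}$ characters of $\LGruppe_2(p)$ are only faithful (with $p$ in the order and the right degree) when tied to the congruence $p \equiv -1 \bmod 4$; for $p \equiv +1 \bmod 4$ the degree-$\tfrac{p+1}{2}$ faithful character forces $G = \SL_2(p)$ but its character field/Schur index do not allow a rationally irreducible realization in dimension $p+1$ that is symplectic, so no group arises.

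Finally, in the surviving case $p \equiv -1 \bmod 4$, I would pin down which cover and which rational form occurs. From the generic character table, the two algebraically conjugate degree-$\tfrac{p+1}{2}$ characters $\chi, \bar\chi$ of $\SL_2(p)$ have character field $\Q(\sqrt{-p})$ and, by the Mackey/Frobenius computation $(1_P^G, \chi)_G = (1_P, \chi\vert_P)_P = 1$ already recorded before the theorem statement, are realizable over $\Q(\sqrt{-p})$ (Isaacs \cite[Corollary 10.2(c)]{Isaacs}); the sum $\chi + \bar\chi$ is then a rationally irreducible character of degree $p+1$ with commuting algebra $\Q(\sqrt{-p})$, which is totally complex, so the group is symplectic by Lemma~\ref{char_symp}. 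This group is exactly ${}_{\sqrt{-p}}[\SL_2(p)]_{\frac{p+1}{2}}$. Maximality: any \simf\ overgroup would have to lie in $\GL_{p+1}(\Q)$, be symplectic, contain $\SL_2(p)$, and by the argument above its only room for growth is an outer automorphism of $\LGruppe_2(p)$ or an extra cyclic factor commuting with $\pm P$; the outer diagonal automorphism of $\LGruppe_2(p)$ does not fix the character $\chi$ (it swaps $\chi \leftrightarrow \bar\chi$) so cannot act on this representation, and the centralizer computation $C_G(P) = \pm P$ (forced once $G/Z(G)\isom\LGruppe_2(p)$ and $Z(G)=\pm I_{p+1}$) leaves no room for an extra cyclic factor — hence ${}_{\sqrt{-p}}[\SL_2(p)]_{\frac{p+1}{2}}$ is \simf\ and is the unique possibility. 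I expect the main obstacle to be the careful bookkeeping in excluding the $P \unlhd G$ case and in the $p \equiv +1 \bmod 4$ case: one must be precise about how the $2$-dimensional complementary constituent interacts with rationality, Schur indices, and the symplectic condition, since unlike the $\Sp_{p-1}(\Q)$ situation the degrees do not match up cleanly and Lemma~\ref{mainlemma}(1) has to do the real work of ruling out the "$C_p$-normal plus cyclic tail" configurations.
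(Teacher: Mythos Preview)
Your outline follows the paper's arc --- Minkowski's bound, then Blau's theorem, then the generic character table of $\SL_2(p)$ --- but it skips the step that does most of the work in the paper's argument: determining the field $K$ \emph{before} anything else. The paper observes that since $G$ is symplectic and rationally irreducible it embeds into $\GL_m(K)$ for some minimal totally complex $K$ with $m\cdot[K:\Q]=p+1$; restricting this $K$-representation to $P$ and matching against the rational decomposition (twice trivial plus the faithful $(p-1)$-dimensional piece) forces $[K:\Q]=2$, $K\subseteq\Q(\zeta_p)$, hence $K=\Q(\sqrt{-p})$ and $m=(p+1)/2$. This single step simultaneously (i) justifies the degree input to Blau, (ii) pins the centralizer of $\delta(P)$ in $K^{m\times m}$ down to $\Q(\zeta_p)\times\Q(\sqrt{-p})$, and (iii) later yields the congruence $p\equiv -1\pmod 4$ directly from the character table.

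Without this, your argument has a real gap at the $Z(G)$ step. After Blau gives $G/Z(G)\cong\LGruppe_2(p)$ you immediately list only $\LGruppe_2(p)$, $\pm\LGruppe_2(p)$, $\SL_2(p)$, but nothing you have written excludes, say, $Z(G)\cong C_4$ or $C_6$ (giving central products like $C_4\Ydown\SL_2(p)$ or $C_3\times\SL_2(p)$). Your centralizer computation is carried out in $\GL_{p+1}(\Q)$, where $C_{\GL_{p+1}(\Q)}(P)\cong\Q(\zeta_p)^*\times\GL_2(\Q)$ --- note that this is \emph{not} $\Q(\zeta_p)$ as you wrote, and finite subgroups of the $\GL_2(\Q)$-factor need not be cyclic or act irreducibly --- so this only bounds $Z(G)$ very loosely. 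The paper instead works inside $\GL_{(p+1)/2}(\Q(\sqrt{-p}))$, where the centralizer of $P$ has torsion $C_{2p}\times C_2$; combining $P\ntrianglelefteq G$ with a short imprimitivity argument (via Minkowski) then forces $Z(G)=\pm I_{p+1}$. Your treatment of the case $p\equiv 1\pmod 4$ is correspondingly vague: in the paper it falls out instantly because $K=\Q(\sqrt{-p})$ is already fixed and the character table shows no degree-$(p+1)/2$ character of $\pm\LGruppe_2(p)$ or $\SL_2(p)$ has this character field unless $p\equiv -1\pmod 4$.
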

\begin{proof}
Let $P \in \Syl_p(G)$. Then again, by Minkowski's bound, $\abs{P}=p$. Thus the natural representation of $P$ splits into twice the trivial one and the $\Q$-irreducible of degree $p-1$. Since there exists an embedding $\delta \colon G \to \GL_m(K)$ for some totally complex number field $K$ with $m\cdot [K:\Q] = p+1$ there is only the possibility $K=\Q(\sqrt{-p})$ and $m=\frac{p+1}{2}$. So the commuting algebra of $\delta(P)$ in $K^{m\times m}$ is isomorphic to $\Q(\zeta_p) \times \Q(\sqrt{-p})$. In particular, $Z(G)$ is isomorphic to a subgroup of $C_{2p} \times C_2$. By Lemma \ref{mainlemma} it follows that $P \ntrianglelefteq G$ and therefore $Z(G) \le C_2 \times C_2$. Since $G$ cannot be imprimitive by Minkowski's bound, $Z(G)$ is not isomorphic to $C_2\times C_2$. This shows $Z(G) = \pm I_{p+1}$. From Blau's theorem \cite[VIII Theorem 7.2]{Feit} it follows that $G$ must be isomorphic to $\pm \LGruppe_2(p)$ or $\SL_2(p)$. Since $K \isom \Q(\sqrt{-p})$ it follows from the character table of $\SL_2(p)$ \cite{SchurPSL} that $p \equiv -1\mod 4$ and that $\pm \LGruppe_2(p)$ has no faithful complex character of degree $\frac{p+1}{2}$. So $G$ must be conjugate to ${}_{\sqrt{-p}}[\SL_2(p)]_\frac{p+1}{2}$ and the result follows.
\end{proof}

The classification of all conjugacy classes of \simf\ subgroups of $\Sp_k(\Q)$ for $k \in \{6, 8\}$ show that the above result also holds for $p=5$. But  the unique \simf\ subgroup of $\Sp_8(\Q)$ whose order is divisible by $7$ is ${}_{\sqrt{-7}}[2.\Alt_7]_4$ (which contains a subgroup conjugate to ${}_{\sqrt{-7}}[\SL_2(7)]_4$).

\subsection{The group $QD_{2^n}$}

Let $n\ge 4$. The group $QD_{2^n} = \left\langle x, y \mid x^{2^{n-1}}, y^2, x^y = x^{2^{n-2}-1}\right\rangle $ has one rationally irreducible representation of degree $2^{n-2}$. This representation has $\Q(\zeta_{2^{n-1}} - \zeta_{2^{n-1}}^{-1})$ as commuting algebra and is denoted by ${}_{\zeta_{2^{n-1}} - \zeta_{2^{n-1}}^{-1}} [QD_{2^n}]_2$.

\begin{prop}\label{QuasiDihedral}
If $n\ge 5$ then ${}_{\zeta_{2^{n-1}} - \zeta_{2^{n-1}}^{-1}} [QD_{2^n}]_2$ is a \simf\ subgroup of $\Sp_{2^{n-2}}(\Q)$.
\end{prop}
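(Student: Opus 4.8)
The goal is to show that $N := {}_{\zeta_{2^{n-1}} - \zeta_{2^{n-1}}^{-1}} [QD_{2^n}]_2$, which is $QD_{2^n}$ acting via its unique rationally irreducible representation of degree $2^{n-2}$, is already \simf{} in $\GL_{2^{n-2}}(\Q)$ and carries an invariant symplectic form. The plan is to proceed in three stages: first verify that $N$ is symplectic, then bound any \simf{} supergroup $G \ge N$ using the structure of $O_2(G)$ and the Bravais-group machinery, and finally check that no proper enlargement is possible.

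\textbf{Step 1: $N$ is symplectic.} The commuting algebra of $N$ is $\End(N) \isom \Q(\zeta_{2^{n-1}} - \zeta_{2^{n-1}}^{-1})$, which for $n \ge 5$ is a totally complex field (it is the fixed field of the quasi-dihedral action on $\Q(\zeta_{2^{n-1}})$, and one checks directly that it is not totally real — indeed $\zeta_{2^{n-1}} - \zeta_{2^{n-1}}^{-1}$ is a purely imaginary generator). By Lemma \ref{char_symp}, since $\End(N)$ contains a totally complex subfield, $N$ is symplectic; equivalently $N$ embeds in $\Sp_{2^{n-2}}(\Q)$. This is also consistent with the table in the Proposition preceding this one, where $2_+^{1+2k}\Ydown QD_{2^m}$ appears with endomorphism ring $\Q(\zeta_{2^{m-1}} - \zeta_{2^{m-1}}^{-1})$ and generalized Bravais group $N.\Sp_{2k}(2)$.

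\textbf{Step 2: Controlling a \simf{} supergroup.} Let $G$ be \simf{} with $N \le G < \Sp_{2^{n-2}}(\Q)$; I want $G = N$. Since $G$ is \simf{} it is rationally irreducible, and I may assume $G$ is symplectic primitive (the imprimitive case is handled by the outline in Section \ref{secprim}, and $N$ itself is primitive because its action is irreducible of $2$-power degree with a field as commuting algebra). Consider $O_2(G)$. By Lemma \ref{mainlemma} every abelian characteristic subgroup of $O_2(G)$ is cyclic, so Theorem \ref{Hall} applies: $O_2(G)$ is a central product $E_1 \cdot E_2$ with $E_1 \in \{2^{1+2k}_+, 2^{1+2k}_-\}$ and $E_2$ cyclic, dihedral, quasi-dihedral or generalized quaternion. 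Now $N = QD_{2^n}$ is itself a $2$-group normalized by $G$, hence $N \le O_2(G)$, and the generalized Bravais group $\B(N)$ satisfies $N \unlhd \B(N) \unlhd G$ by Lemma \ref{lemmabravais}. I would argue that $O_2(G)$ already equals $N$: a quasi-dihedral group of order $2^n$ appearing inside the Hall decomposition forces (for dimension reasons, $\dim_\Q \langle N \rangle_\Q = 2^{n-2}$ filling the whole matrix algebra, i.e. $\langle N \rangle_\Q = \End(N) \cong \Q^{2^{n-2}\times 2^{n-2}}$... no — rather $\langle N\rangle_\Q$ is a field times matrices) that the $2$-part of $G$ cannot grow: any larger $2$-group in the Hall list whose faithful irreducible rational representation has degree $2^{n-2}$ and commuting algebra containing $\Q(\zeta_{2^{n-1}}-\zeta_{2^{n-1}}^{-1})$ must coincide with $QD_{2^n}$, by inspection of the generic table. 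Hence $F(G) = O_2(G) = N$ (no odd part, since $p - 1 \nmid 2^{n-2}$ for odd $p \ge 5$ forces $O_p(G) = 1$ by Lemma \ref{mainlemma}(2)).

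\textbf{Step 3: No quasisimple layer, and conclusion.} Since $F(G) = N$ is a $2$-group with $\End(N)$ a field, $G/N$ embeds in $\Out(QD_{2^n})$ — but more usefully, $C_G(N) = Z(N)$, so $G/Z(N) \hookrightarrow \Aut(QD_{2^n})$, a $2$-group. Therefore $G$ is a $2$-group itself, so $E(G) = 1$ and $F^*(G) = N$. It remains to rule out a proper $2$-group extension $N < G \le \Sp_{2^{n-2}}(\Q)$. Because $G$ is a finite subgroup of $\langle N\rangle_\Q^* \cdot(\text{something})$... more carefully: since $G$ centralizes nothing new in $\End(N)$ beyond $Z(N)$ and $G \le \B(N)$ is false in general — instead I use that $G$ normalizes $N$ with $C_G(N)=Z(N)$, so $|G| \le |N|\cdot|\Out(QD_{2^n})|$, and then check via the explicit outer automorphisms that any such extension either fails to be symplectic (its form space becomes totally real, contradicting $G < \Sp$) or fails to be a new matrix group (the extension does not preserve a lattice together with the symplectic form, i.e. $\Aut_K(L,F) = N$ already). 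The expected \textbf{main obstacle} is exactly this last verification: one must show that $QD_{2^n}$ is \emph{self-normalizing} as a matrix group inside $\Sp_{2^{n-2}}(\Q)$ up to the automorphisms that are realized by conjugation, i.e. that $\Aut_K(L,F) = N$ for every invariant lattice $L$ and every $F \in \tpForms(N)$. For $n \ge 5$ this should follow from a determinant/Minkowski-bound argument on $|\Aut_K(L,F)|$ combined with the fact that $\Out(QD_{2^n})$ has order only a small power of $2$ and its nontrivial elements do not preserve the symplectic structure; the case $n = 4$ is genuinely different (hence the hypothesis $n \ge 5$), since $QD_{16}$ in degree $4$ does extend further.
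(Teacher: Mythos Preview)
Your argument has a genuine gap at the start of Step~2: you assert that ``$N = QD_{2^n}$ is itself a $2$-group normalized by $G$, hence $N \le O_2(G)$'', but nothing in the setup guarantees that a \simf\ supergroup $G$ normalizes $N$. The hypothesis is only $N \le G$, not $N \unlhd G$, so Hall's theorem applied to $O_2(G)$ and the Bravais machinery (Lemma~\ref{lemmabravais}, which requires a \emph{normal} subgroup as input) give you no direct handle on $N$. Everything in Step~3 then rests on this unjustified normality. There are further loose ends even granting it: the claim ``$p-1 \nmid 2^{n-2}$ for odd $p \ge 5$'' is false for Fermat primes such as $p=5,17$; the assertion that $O_2(G)=N$ ``by inspection of the generic table'' is not an argument; and the reduction to the symplectic primitive case is not automatic, since an imprimitive \simf\ group could in principle contain $N$.

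The paper bypasses all of this by working through the commuting algebra rather than normal-subgroup structure. Since $K := \End(N) \cong \Q(\zeta_{2^{n-1}} - \zeta_{2^{n-1}}^{-1})$ has cyclic Galois group over $\Q$, its subfields are linearly ordered and the maximal proper one is the totally real fixed field of complex conjugation. Any symplectic irreducible $G \ge N$ has $\End(G) \subseteq K$, and by Lemma~\ref{char_symp} $\End(G)$ contains a totally complex subfield; as every proper subfield of $K$ is totally real this forces $\End(G) = K$. Hence $G$ embeds in $\GL_2(K)$, and $G/\langle\pm I\rangle$ embeds in $\PGL_2(K)$. Blichfeldt's list of finite subgroups of $\PGL_2$ then finishes the proof: for $n \ge 5$ the image of $N$ is already dihedral of order $2^{n-1} \ge 16$, which rules out $A_4$, $S_4$, $A_5$ and leaves $G = N$. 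Note how this argument never needs $N$ to be normal in $G$; the inclusion $\End(G)\subseteq\End(N)$ does all the work.
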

\begin{proof}
The commuting algebra $K$ of $H:= {}_{\zeta_{2^{n-1}} - \zeta_{2^{n-1}}^{-1}} [QD_{2^n}]_2$ is isomorphic to $\Q(\zeta_{2^{n-1}} - \zeta_{2^{n-1}}^{-1})$. This is the fixed field of the automorphism of $\Q(\zeta_{2^{n-1}})$ induced by $\zeta_{2^{n-1}}\mapsto -\zeta_{2^{n-1}}^{-1} = \zeta_{2^{n-1}}^{2^{n-2}-1}$. The subfields of $K$ are linearly ordered by Galois theory and the maximal subfield is the fixed-field of complex conjugation i.e. totally real. Thus each \simf\ supergroup $G$ of $H$ embeds into $\GL_2(K)$. Therefore $\bar{G}:= G / Z(G) = G/\left\langle \pm I_{2^{n-2}}\right\rangle $ embeds into $\PGL_2(K)$. Since $n\ge 5$ is assumed one finds that $\bar{G}$ is a dihedral group (of order $2^{n-1}$) according to Blichfeldt's classification \cite{Blichfeldt}. This shows $G=H$ as claimed.
\end{proof}

\subsection{The group $2_+^{1+2n}$}

Let $T_n < \GL_{2^n}(\Q) $ be the $n$-fold tensor product of $\left\langle \left( \begin{smallmatrix} 0&1\\1&0 \end{smallmatrix}\right),\; \left( \begin{smallmatrix} 1&0\\0&-1 \end{smallmatrix}\right) \right\rangle \isom D_8$. So $T_n$ is isomorphic to the extraspecial group $2_+^{1+2n}$. 

This section, which is heavily based on Section 5 of \cite{cliff}, describes the construction of $\B(T_n)$ and defines a maximal finite subgroup of $\GL_{2^n}(\Q(\sqrt{-2}))$ which gives rise to a \spimf\ subgroup of $\Sp_{2^{n+1}}(\Q)$.

Let $(b_0, \dots, b_{2^n-1})$ be the standard basis of $\Q^{1 \times 2^n}$. If one identifies $v\in \F_2^n$ with $j= \sum_i v_i 2^{i-1}$ then the basis vectors $b_j$ can be indexed by elements of $\F_2^n$. For an affine subspace $U$ of $\F_2^n$ let $\chi_U = \sum_{u \in U} b_u$. Then $L_n$ and $L'_n$ are the $\Z$-lattices in $\Q^{1 \times 2^n}$ spanned by
$$ \left \lbrace 2^{\lfloor(n-\dim(U)+\delta)/2\rfloor} \chi_U \mid  U \text{ an affine subspace of } \F_2^n \right\rbrace $$
where $\delta=0$ for $L_n$ and $\delta=1$ for $L'_n$.

In \cite[Theorem 3.2]{Wall} it is shown that $H_n:= \Aut(L_n, I_{2^n}) \cap \Aut(L'_n, I_{2^n})$ is isomorphic to $2_+^{1+2n}.O^+_{2n}(2)$ and further $O_2(H_n)$ is conjugate to $T_n$.

It is shown in \cite{Winter} that $\Out(2_+^{1+2n}) \isom O^+_{2n}(2) \DP 2 \isom GO^+_{2n}(2)$ is the full orthogonal group of a quadratic form of Witt defect $0$. Conjugation by $h_n:= \left(\begin{smallmatrix} 1&1\\1&-1 \end{smallmatrix}\right) \tensor I_{2^{n-1}}$ induces an outer automorphism on $T_n$ which is not realized by $H_n$. Since $\End({T}_n) \isom \Q$ and $h_n^2 = 2 I_{2^n}$, there exists no element in $\GL_{2^n}(\Q)$ of finite order which induces the same automorphism on $T_n$. Thus $H_n$ is conjugate to $\B(T_n)$ (once it is shown that $T_n \unlhd \B(T_n)$). Moreover $\frac{1}{\sqrt{-2}} h_n$ normalizes $T_n$ and therefore $\B(T_n)$. 
A comparison of the orders of $H_n$ and $\Out(T_n)$ shows that $\calH_n:= \left\langle H_n,\; \frac{1}{\sqrt{-2}} h_n\right\rangle$ is the unique extension of $H_n$ by $C_2$ in $\GL_{2^n}(\Q(\sqrt{-2}))$. The group $\calH_n$ gives rise to a finite symplectic matrix group in $\Sp_{2^{n+1}}(\Q)$ which will be denoted by ${}_{\sqrt{-2}} [2_+^{1+2n}.(O^+_{2n}(2) \DP 2)]_{2^n}$ in the sequel.

Finally, by extending scalars $M_n := \sqrt{-2} L'_n + L_n$ is a $\Z[\sqrt{-2}]$-lattice generated by 
$$\left\lbrace \sqrt{-2}^{n-\dim(U)} \chi_U \mid U \text{ an affine subspace of } \F_2^n \right\rbrace\:.$$

\begin{lemma}
The lattice $M_n$ is the $n$-fold tensor product $M_1 \tensordown{\Z[\sqrt{-2}]} \dots  \tensordown{\Z[\sqrt{-2}]} M_1$.
\end{lemma}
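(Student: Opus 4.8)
The plan is to prove the two inclusions $M_1^{\tensor n} \subseteq M_n$ and $M_n \subseteq M_1^{\tensor n}$ separately (all tensor powers being over $\Z[\sqrt{-2}]$), using the identification of $\Q(\sqrt{-2})^{1\times 2^n}$ with the $n$-fold tensor power of $\Q(\sqrt{-2})^{1\times 2}$ under which $b_v = b_{v_1}\tensor\dots\tensor b_{v_n}$ for $v\in\F_2^n$. Since $\Z[\sqrt{-2}]$ is a principal ideal domain and $M_1$ is torsion-free, $M_1^{\tensor n}$ is free and embeds in the ambient space as the $\Z[\sqrt{-2}]$-span of the pure tensors $w_1\tensor\dots\tensor w_n$ with each $w_i$ a generator of $M_1 = \langle b_0+b_1,\ \sqrt{-2}b_0,\ \sqrt{-2}b_1\rangle_{\Z[\sqrt{-2}]}$. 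Each such generator equals $\sqrt{-2}^{\,1-\dim W}\chi_W$ for an affine subspace $W$ of $\F_2$ (namely $\{0\}$, $\{1\}$ or $\F_2$), so for a ``box'' $U = W_1\times\dots\times W_n$ one has $\chi_U = \chi_{W_1}\tensor\dots\tensor\chi_{W_n}$ and $\dim U = \sum_i\dim W_i$; hence $\sqrt{-2}^{\,n-\dim U}\chi_U$ is a pure tensor of generators of $M_1$. This gives $M_1^{\tensor n}\subseteq M_n$ at once and, moreover, identifies $M_1^{\tensor n}$ with the $\Z[\sqrt{-2}]$-span of those generators $\sqrt{-2}^{\,n-\dim U}\chi_U$ of $M_n$ for which $U$ is a direct product of affine subspaces of the factors $\F_2$.

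For the reverse inclusion I would induct on $n$ (the case $n=1$ being trivial), showing that $\sqrt{-2}^{\,n-\dim U}\chi_U\in M_1^{\tensor n}$ for every affine subspace $U\subseteq\F_2^n$ of dimension $d$. Split $\F_2^n = \F_2^{n-1}\times\F_2$ along the last coordinate and put $U_\varepsilon\times\{\varepsilon\} = U\cap(\F_2^{n-1}\times\{\varepsilon\})$ for $\varepsilon\in\F_2$. If the last coordinate is constant on $U$, then $U = U'\times\{c\}$ with $U'\subseteq\F_2^{n-1}$ affine of dimension $d\le n-1$, so $\sqrt{-2}^{\,n-d}\chi_U = (\sqrt{-2}^{\,(n-1)-d}\chi_{U'})\tensor(\sqrt{-2}\,b_c)$ lies in $M_1^{\tensor(n-1)}\tensor M_1$ by induction. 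If both fibres coincide, $U = U'\times\F_2$ with $\dim U' = d-1$, and $\sqrt{-2}^{\,n-d}\chi_U = (\sqrt{-2}^{\,(n-1)-(d-1)}\chi_{U'})\tensor(b_0+b_1)$, again in $M_1^{\tensor(n-1)}\tensor M_1$ by induction. There remains the case $U_0\neq U_1$, where $\dim U_0 = \dim U_1 = d-1$ and $d\le n-1$, so $n-d\ge 1$.

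This last case is the crux. Over $\F_2$ two distinct parallel affine subspaces are disjoint, so $U_0\cap U_1 = \emptyset$ and $V := U_0\cup U_1$ has $2^d$ elements and is in fact an affine subspace of $\F_2^{n-1}$ of dimension $d$. Rewriting $b_0 = (b_0+b_1)-b_1$ in $\chi_U = \chi_{U_0}\tensor b_0 + \chi_{U_1}\tensor b_1$ and using $\chi_{U_1} = \chi_V - \chi_{U_0}$ yields
$$\chi_U = \chi_{U_0}\tensor(b_0+b_1) + \chi_V\tensor b_1 - 2\,\chi_{U_0}\tensor b_1\:.$$
Multiplying by $\sqrt{-2}^{\,n-d}$, invoking $2 = -(\sqrt{-2})^2$, and using $n-d\ge 1$ to move one factor $\sqrt{-2}$ onto the last tensor slot, the three summands turn into $(\sqrt{-2}^{\,(n-1)-\dim U_0}\chi_{U_0})\tensor(b_0+b_1)$, $(\sqrt{-2}^{\,(n-1)-\dim V}\chi_V)\tensor(\sqrt{-2}\,b_1)$ and $\bigl(\sqrt{-2}\cdot\sqrt{-2}^{\,(n-1)-\dim U_0}\chi_{U_0}\bigr)\tensor(\sqrt{-2}\,b_1)$. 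By the induction hypothesis each left factor lies in $M_1^{\tensor(n-1)}$ and each right factor in $M_1$, so $\sqrt{-2}^{\,n-d}\chi_U\in M_1^{\tensor(n-1)}\tensor M_1 = M_1^{\tensor n}$, closing the induction.

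I expect the only genuine obstacle to be the identity $\chi_{U_0}+\chi_{U_1} = \chi_V$ in the non-box case: one must observe that the union of the two fibres is again an affine subspace, of dimension one larger than each fibre, and then check that the factor $2$ forced by the substitution $b_0 = (b_0+b_1)-b_1$ is exactly absorbed by the power of $\sqrt{-2}$ that is available precisely because $\dim U < n$. The remaining bookkeeping with the exponents $n-\dim(\cdot)$ is routine.
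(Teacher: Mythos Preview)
Your proof is correct and follows essentially the same approach as the paper: both argue by showing $M_n = M_{n-1}\tensordown{\Z[\sqrt{-2}]} M_1$, split an affine subspace $U\subseteq\F_2^n$ along the last coordinate, and in the nontrivial case (where the two fibres $U_0,U_1$ are distinct) use a three-term decomposition involving the $d$-dimensional affine subspace $U_0\cup U_1\subseteq\F_2^{n-1}$ together with the extra factor of $\sqrt{-2}$ coming from $d<n$. The only cosmetic difference is that the paper works with a linear direction space $U_0$ and a translating vector $z$, whereas you phrase the same case analysis in terms of the fibres $U_\varepsilon$; the resulting identities are the same.
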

\begin{proof}
Let $V_{n-1} = \left\langle e_1, \dots, e_{n-1} \right\rangle$ and $V_1 = \left\langle e_n \right\rangle$. One checks that $b_x \tensor b_y = b_{x+y}$ for all $x\in V_{n-1}, y \in V_1$.

Let $U$ be a $d$-dimensional affine subspace of $V_{n-1}$. For $y$ in $V_1$ it follows that $\sqrt{-2}^{n-1-d}\chi_U \tensor \sqrt{-2} b_y = \sqrt{-2}^{n-d} \chi_{y+U} \in M_n$. Similarly, since $U + V_1$ has dimension $d+1$, it follows that $\sqrt{-2}^{n-1-d} \chi_U \tensor \chi_{V_1} = \sqrt{-2}^{n-(d+1)} \chi_{U+V_1} \in M_n$. Thus $M_{n-1} \tensor M_1 \subseteq M_n$.

Conversely, suppose $U$ is a $d$-dimensional affine subspace of $\F_2^n$. Write $U = x+y + U_0$ where $U_0$ is a subspace of $\F_2^n$ and $x \in V_{n-1}$, $y \in V_1$. 

If $U_0 \le V_{n-1}$ then $\sqrt{-2}^{n-d} \chi_U = \sqrt{-2}^{n-1-d} \chi_{x+U} \tensor \sqrt{-2} b_y \in M_{n-1} \tensor M_1$. Otherwise $U_{n-1} := U_0 \cap V_{n-1}$ is a $(d-1)$-dimensional subspace and $U_0 = U_{n-1} \cup (z+e_n + U_{n-1})$ for some $z \in V_{n-1}$.

If $z \in U_{n-1}$, then $\sqrt{-2}^{n-d} \chi_U = \sqrt{-2}^{n-1 - (d-1)} \chi_{x+U_{n-1}} \tensor \chi_{V_1}$ otherwise
\begin{align*}
\sqrt{-2}^{n-d} \chi_U &= 
\sqrt{-2}^{n-1-d} \chi_{x+U_{n-1}+\left\langle z \right\rangle} \tensor \sqrt{-2}b_y\\
&+ \sqrt{-2}^{n-1-(d-1)} \chi_{x+z+U_{n-1}}\tensor \chi_{V_1} \\
&- \sqrt{-2}(\sqrt{-2}^{n-1-(d-1)} \chi_{x+z+U_{n-1}}\tensor \sqrt{-2}b_y )
\end{align*}
This shows $M_n \subseteq M_{n-1} \tensor M_1$.
\end{proof}

\begin{lemma} The group $\calH_n$ is conjugate to
the Hermitian automorphism group $\Aut_{\Q(\sqrt{-2})}(M_n) := \Stab_{U_{2^n}(\Q(\sqrt{-2}))}(M_n)$ in $\GL_{2^n}(\Q(\sqrt{-2}))$.
\end{lemma}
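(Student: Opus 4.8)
The plan is to show both inclusions $\calH_n \subseteq \Aut_{\Q(\sqrt{-2})}(M_n)$ and $\Aut_{\Q(\sqrt{-2})}(M_n) \subseteq \calH_n$ up to conjugacy, using the tensor decomposition $M_n \isom M_1^{\tensor n}$ from the previous lemma and the known structure of $\calH_n$ as the unique extension of $H_n$ by $C_2$.

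First I would establish the easy inclusion: every generator of $\calH_n$ preserves $M_n$ and the Hermitian form $I_{2^n}$. Recall $\calH_n = \langle H_n, \frac{1}{\sqrt{-2}} h_n\rangle$ where $H_n = \Aut(L_n, I_{2^n}) \cap \Aut(L'_n, I_{2^n})$. Since $M_n = \sqrt{-2} L'_n + L_n$ and $H_n$ stabilizes both $L_n$ and $L'_n$, it stabilizes $M_n$; moreover $H_n < \GL_{2^n}(\Q)$ fixes $I_{2^n}$, hence fixes the Hermitian form. For $\frac{1}{\sqrt{-2}} h_n$, one computes directly that it sends $L_n$ into $L'_n$-related lattices in the right way — concretely, using the spanning sets $\{2^{\lfloor(n-\dim U+\delta)/2\rfloor}\chi_U\}$ for $L_n, L_n'$ and the generating set $\{\sqrt{-2}^{\,n-\dim U}\chi_U\}$ for $M_n$, one verifies $\frac{1}{\sqrt{-2}} h_n \cdot M_n = M_n$; and $\frac{1}{\sqrt{-2}} h_n$ is a scalar multiple of the self-adjoint matrix $h_n$, so it preserves the Hermitian form up to the norm $\Nr_{\Q(\sqrt{-2})/\Q}$ of the scalar $\frac{1}{\sqrt{-2}}$, which is $\frac12$; combined with $h_n h_n^{\textnormal{tr}} = 2 I_{2^n}$ this gives $(\frac{1}{\sqrt{-2}}h_n)(\frac{1}{\sqrt{-2}}h_n)^* = I_{2^n}$ (using the conjugate transpose $*$ for the Hermitian form). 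Here it is cleanest to exploit the tensor structure: $\frac{1}{\sqrt{-2}}h_n = (\frac{1}{\sqrt{-2}}h_1) \tensor I_{2^{n-1}}$ and $M_n = M_1^{\tensor n}$, so it suffices to check the $n=1$ case and that $M_{n-1} \tensor M_1$ is stabilized by $A \tensor I$ whenever $M_{n-1}$ is stabilized by $A$, which is immediate.

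For the reverse inclusion I would argue by finiteness and maximality rather than by a direct lattice computation. The group $\Aut_{\Q(\sqrt{-2})}(M_n)$ is finite: it fixes the full $\Z[\sqrt{-2}]$-lattice $M_n$ and the positive definite Hermitian form $I_{2^n}$ (an averaging argument, exactly as in Section \ref{secprop}). Restricting scalars, $\Aut_{\Q(\sqrt{-2})}(M_n)$ gives rise to a finite symplectic subgroup of $\Sp_{2^{n+1}}(\Q)$ containing (the image of) $\calH_n$, hence containing (the image of) $T_n = O_2(H_n) \isom 2_+^{1+2n}$ as a normal subgroup — normality because $T_n$ is normal in $H_n \le \calH_n$ and because $\frac{1}{\sqrt{-2}}h_n$ normalizes $T_n$ (this was already noted in the construction of $\calH_n$). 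By Lemma \ref{lemmabravais}, any finite subgroup $X$ of $\langle T_n\rangle_\Q^* $ with $T_n \unlhd X$ satisfies $X \le \B(T_n)$; but $\Aut_{\Q(\sqrt{-2})}(M_n)$ need not lie in $\langle T_n\rangle_\Q$, so instead I would use that $\End(T_n) \isom \Q$ forces $\langle T_n\rangle_\Q = \Q^{2^n \times 2^n}$, whence $\frac{1}{\sqrt{-2}}h_n \in \langle T_n\rangle_\Q^*$ and $\calH_n$ itself is a subgroup of $\langle T_n\rangle_\Q^*$ normalizing $T_n$; then the argument already given in the construction — namely that $\calH_n$ is the unique extension of $H_n \isom \B(T_n)$ by $C_2$ inside $\GL_{2^n}(\Q(\sqrt{-2}))$, because $\Out(T_n) \isom GO^+_{2n}(2)$ has order exactly $2\abs{O^+_{2n}(2)}$ — shows that any finite subgroup of $\GL_{2^n}(\Q(\sqrt{-2}))$ properly containing $\calH_n$ and normalizing $T_n$ would induce outer automorphisms of $T_n$ beyond $GO^+_{2n}(2)$, which is impossible. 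It remains to rule out that $\Aut_{\Q(\sqrt{-2})}(M_n)$ fails to normalize $T_n$, i.e. that $T_n$ is not normal in it; this follows because $\Aut_{\Q(\sqrt{-2})}(M_n)$ is symplectic primitive (its restriction-of-scalars image is, by the $n$-fold tensor structure and the fact that $2_+^{1+2n}$ acts irreducibly and absolutely irreducibly), so by Lemma \ref{mainlemma} together with the classification of $O_2$ and its generalized Bravais group in the Proposition of Section \ref{secprim}, the $2$-core of $\Aut_{\Q(\sqrt{-2})}(M_n)$ is forced to be conjugate to $T_n$.

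The main obstacle I expect is the reverse inclusion — specifically, confirming that $\Aut_{\Q(\sqrt{-2})}(M_n)$ contains a normal subgroup conjugate to $T_n$ and cannot be larger than $\calH_n$. The delicate point is that a priori a Hermitian automorphism of $M_n$ could induce an automorphism of the pair $(L_n, L_n')$ (after restriction of scalars) that is not visible in $H_n$, or could fail to normalize $T_n$ altogether; ruling this out rigorously requires combining the primitivity of the restriction-of-scalars group with Minkowski-type order bounds (Lemma \ref{Minkowski}) to pin down the $2$-core, and then invoking the uniqueness of the $C_2$-extension. If one prefers a self-contained route, the alternative is the purely computational one: show directly from the generating set $\{\sqrt{-2}^{\,n-\dim U}\chi_U\}$ that the stabilizer of $M_n$ together with $I_{2^n}$ in $U_{2^n}(\Q(\sqrt{-2}))$ is generated by $H_n$ and $\frac{1}{\sqrt{-2}}h_n$, reducing via $M_n = M_1^{\tensor n}$ to an explicit check for small $n$ and an inductive tensor argument; this is more elementary but considerably more tedious.
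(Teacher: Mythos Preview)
Your easy inclusion is fine and matches what the paper needs. The reverse inclusion, however, takes a very different route from the paper and has a real gap exactly where you flag it.

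The paper's argument for the reverse inclusion is short and avoids all structure theory of primitive groups. It works by restriction of scalars: as a $\Z$-lattice with the trace form, $M_n = \sqrt{-2}L'_n \perp L_n$, so $\Aut_{\Q(\sqrt{-2})}(M_n)$ sits inside $\Aut(\sqrt{-2}L'_n \perp L_n, I_{2^{n+1}})$ as the subgroup commuting with $\sqrt{-2}$. The two orthogonal summands can at worst be swapped, so the block-diagonal part $G_n$ has index at most $2$. For $n\ne 3$ Wall's theorem gives $\Aut(L_n,I)=\Aut(L'_n,I)=H_n$, and commuting with $\sqrt{-2}$ (which interchanges the summands) forces $g_2=g_1^w$, so $G_n$ is conjugate to $H_n$. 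Since $\frac{1}{\sqrt{-2}}h_n$ lies in the Hermitian automorphism group, equality with $\calH_n$ follows by order. The case $n=3$ is handled by an explicit check, because $L_3$ is $E_8$ and $\Aut(L_3)$ is strictly larger than $H_3$.

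Your proposed route instead tries to show $T_n \unlhd \Aut_{\Q(\sqrt{-2})}(M_n)$ and then invoke uniqueness of the $C_2$-extension. The normality step is the problem. You justify it by asserting that the restriction-of-scalars group is symplectic primitive and then reading off $O_2$ from Hall's list; but you give no argument for primitivity (irreducibility of $T_n$ does not imply primitivity of a supergroup), and even granting primitivity, Hall's list does not by itself pin down $O_2$ as $2_+^{1+2n}$ rather than, say, a cyclic group or a smaller extraspecial factor. You also do not address $n=3$, where the paper's method breaks and a separate verification is needed; your structural argument would face the same issue there, since nothing in it distinguishes $n=3$ from other $n$. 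The paper's orthogonal-decomposition argument sidesteps all of this: it never needs $T_n$ to be normal in the full Hermitian automorphism group, because Wall's theorem already bounds the order.
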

\begin{proof}
For $n=3$ the result can be checked explicitly. So let $n \ne 3$ denote by $(v_1,\dots,v_{2^n})$ a $\Z$-basis of $L'_n$ such that $(2v_1,\dots,2v_{2^{n-1}}, v_{2^{n-1}+1},\dots,v_{2^n})$ is a $\Z$-basis of $L_n$. Then $(\sqrt{-2}v_1,\dots,\sqrt{-2}v_{2^n},2v_1,\dots,2v_{2^{n-1}}, v_{2^{n-1}+1},\dots,v_{2^n})$ is a $\Z$-basis of $M_n = \sqrt{-2} L'_n \oplus L_n$. In particular, the $\Z$-lattices $L_n$ and $\sqrt{-2}L'_n$ are perpendicular with respect to the scalar product $(x,y) \mapsto \frac{1}{2} \Tr_{\Q(\sqrt{-2})/\Q}(x\transpose{\bar{y}})$.
Thus the group $\Aut_{\Q(\sqrt{-2})}(M_n)$ is the subgroup of $\Aut(\sqrt{-2} L'_n \perp L_n, I_{2^{n+1}})$ which commutes with $\sqrt{-2}$. Since $n \ne 3$ is assumed, the automorphism groups of $L_n$ and $L'_n$ equal $H_n$ \cite[Theorem 3.2]{Wall}. Hence with respect to appropriate $\Z$-bases,  $\Aut_{\Q(\sqrt{-2})}(M_n)$ contains a subgroup $G_n$ of index at most two where
$$G_n = \left\lbrace \begin{pmatrix} g_1 & 0 \\ 0 & g_2\end{pmatrix} \mid g_1, g_2 \in H_n \right\rbrace \cap \Aut_{\Q(\sqrt{-2})}(M_n)\:.$$
Now $\sqrt{-2}$ interchanges $\sqrt{-2}L'_n$ and $L_n$, i.e. it operates as a block matrix $\left(\begin{smallmatrix} 0 & w \\ -2w^{-1} & 0\end{smallmatrix}\right)$ for some $w \in \GL_{2^n}(\Q)$. So $\left(\begin{smallmatrix} g_1 & 0 \\ 0 & g_2\end{smallmatrix}\right) \in G_n$ if and only if $g_2=g_1^w$. Therefore the subgroup $G_n< \Aut_{\Q(\sqrt{-2})}(M_n)$ is conjugate to $H_n$ in $\GL_{2^n}(\Q(\sqrt{-2}))$.
The result follows since $\frac{1}{\sqrt{-2}} h_n$ acts on $M_1 \tensor M_{n-1} = M_n$.
\end{proof}

\begin{theorem}\label{e2plus}
If $n \ge 2$, then ${}_{\sqrt{-2}}[2_+^{1+2n}.(O^+_{2n}(2) \DP 2)]_{2^n}$ is the (up to conjugacy) unique \spimf\ subgroup of $\Sp_{2^{n+1}}(\Q)$ with Fitting group $2_+^{1+2n}$. In particular, $\B(T_n)$ is conjugate to $H_n$.
\end{theorem}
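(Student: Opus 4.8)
The plan is to proceed in three stages: first pin down $F^*(G)$ for any \spimf{} $G<\Sp_{2^{n+1}}(\Q)$ with $F(G)=2_+^{1+2n}$, then bound $G/F^*(G)\le\Out(F^*(G))$ and identify the matrix-group extension, and finally deduce the claim about $\B(T_n)$. Let $G$ be such a group and put $N:=O_2(G)\isom 2_+^{1+2n}$. By Lemma \ref{mainlemma}, the natural character of $N$ is a multiple of a single rationally irreducible character, and since $2_+^{1+2n}$ has a unique faithful rational irreducible (of degree $2^n$, namely that of $T_n$) its enveloping algebra $\langle N\rangle_\Q$ is $\Q^{2^n\times 2^n}$ and the natural representation of $G$ restricted to $N$ is a multiple of $T_n$'s. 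From the table in the Proposition of Section \ref{secprim} (the row $2_+^{1+2n}$), $\B(N)\isom N.O^+_{2n}(2)$ has $\End(\B(N))\isom\Q$ and $\dim_\Q\langle N\rangle_\Q=2^n$; hence the natural representation of $\B(N)$ is $2$-dimensional over the quaternion-nothing, i.e. it is the $2^n$-dimensional rational irreducible itself, and in particular $2n=2^{n+1}$ forces... no: one simply gets that the multiplicity is $2$, so the natural $\Q G$-module is $2$ copies of $T_n$'s module. This is exactly the situation where $\End(G)$ is either $\Q\times\Q$ (impossible, $G$ irreducible), a real quadratic field (impossible, $G$ symplectic), or an imaginary quadratic field or a quaternion algebra.

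Next I would use the standard structure theory: since $N\unlhd G$ is self-centralizing-up-to-layer, $F^*(G)=E(G)\ast N$ with $E(G)$ a central product of quasisimple groups whose center has order dividing $2$; but any such component would have a faithful rational representation compatible with acting on $2$ copies of the $2^n$-dimensional module and commuting with $N$, i.e. a $\le 2$-dimensional faithful representation over $\End(N)\isom\Q$, which is impossible for a quasisimple group. Hence $E(G)=1$, $F^*(G)=N$, and $G/N\hookrightarrow\Out(N)\isom GO^+_{2n}(2)$. By Lemma \ref{lemmabravais}, $\B(N)\le G$ and $\B(N)$ realizes the subgroup $O^+_{2n}(2)\DP 2_{\mathrm{diag}}$... more precisely $H_n$ realizes $2_+^{1+2n}.O^+_{2n}(2)$; the argument already given in the text (conjugation by $h_n$ with $h_n^2=2I$ shows the remaining outer class of order $2$ cannot be realized inside $\GL_{2^n}(\Q)$, but is realized by $\frac1{\sqrt{-2}}h_n$) shows that over $\Q(\sqrt{-2})$ the full $\Out(N)$ is attained, giving $\calH_n$, and that no larger group is possible. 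Translating back to $\Sp_{2^{n+1}}(\Q)$ via Lemma \ref{char_symp}, the unique candidate is ${}_{\sqrt{-2}}[2_+^{1+2n}.(O^+_{2n}(2)\DP 2)]_{2^n}$; maximality follows because we have exhausted $\Out(F^*(G))$, and symplectic primitivity is inherited since $\calH_n\isom\Aut_{\Q(\sqrt{-2})}(M_n)$ with $M_n=M_1^{\tensor n}$ a Hermitian tensor power that admits no orthogonal splitting for $n\ge 2$ (one checks $M_2$ directly; tensor-indecomposability for $n\ge 3$ follows from the structure of $\B(N)$ acting irreducibly).

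Finally, the ``in particular'' is essentially a by-product: the above shows that whenever $2_+^{1+2n}\unlhd G$ for a \spimf{} $G$, one has $\B(T_n)=\{g\in G\mid g\text{ centralizes }\End(T_n)\}$ and $\End(T_n)\isom\Q$, so $\B(T_n)$ is the largest finite subgroup of $\langle T_n\rangle_\Q^*=\GL_{2^n}(\Q)$ normalizing $T_n$; by the order comparison in the text ($|H_n|=|2_+^{1+2n}.O^+_{2n}(2)|$ and $\Out(T_n)\isom GO^+_{2n}(2)$ has twice that order, the extra factor unrealizable rationally) this forces $\B(T_n)=H_n$, which also supplies the hypothesis $T_n\unlhd\B(T_n)$ left open in Section \ref{secprim}. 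The main obstacle I expect is the ruling-out of a nontrivial layer $E(G)$: one must argue carefully that a quasisimple group commuting with $N$ inside $\GL_{2^{n+1}}(\Q)$ would have to act faithfully on the $2$-dimensional $\End(N)$-space, and handle the borderline case where $E(G)$ could in principle act through $\SL_2$ of a small field before concluding it cannot centralize $\End(N)\isom\Q$; equivalently, one invokes that $C_G(N)\le\langle N\rangle_\Q$ by the double centralizer property, so $C_G(N)=Z(N)$ and $E(G)\le C_G(N)=Z(N)$ is trivial.
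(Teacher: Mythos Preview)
Your proposal has the right skeleton for the uniqueness part but contains a genuine gap for the existence/maximality part, and this gap is precisely where the paper's proof does its real work.

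\medskip
\textbf{The gap: maximal finiteness.} You write ``maximality follows because we have exhausted $\Out(F^*(G))$''. This does not establish that ${}_{\sqrt{-2}}[2_+^{1+2n}.(O^+_{2n}(2)\DP 2)]_{2^n}$ is maximal finite. A proper finite supergroup $H$ need not satisfy $F^*(H)=F^*(G)$; nothing you have said excludes $H$ from having a larger Fitting subgroup or a nontrivial layer. Exhausting $\Out(F^*(G))$ only shows that $G$ is maximal among finite groups normalising $F^*(G)$, which is a strictly weaker statement. The paper proves maximal finiteness by an entirely different mechanism: it shows (by explicit computation for $n=2,3$ and then by induction using $M_n=M_{n-2}\tensordown{\Z[\sqrt{-2}]}M_2$) that the $\Z$-span $\langle G_n\rangle_\Z$ is all of $\Z[\sqrt{-2}]^{2^n\times 2^n}$. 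Since $\Z[\sqrt{-2}]$ has class number one, every $G_n$-invariant $\Z[\sqrt{-2}]$-lattice is then a scalar multiple of $M_n$, so $G_n=\Aut_{\Q(\sqrt{-2})}(M_n)$ is maximal finite in $\GL_{2^n}(\Q(\sqrt{-2}))$. This lattice/order argument is the heart of the proof and is absent from your proposal.

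\medskip
\textbf{A secondary gap: uniqueness of the index-$2$ extension.} For the uniqueness direction you correctly reach $[S:\B(U)]=2$, but then assert ``the unique candidate is $\calH_n$'' without justification. Different matrix realisations of the same outer automorphism class can give non-conjugate extensions. The paper closes this by observing that $\End(U)\isom\Q^{2\times 2}$ is a quaternion algebra over $\Q$ and invoking Theorem~\ref{QA}: the possible commuting algebras of $\langle \B(U),x\rangle$ differ by a torsion unit in $\Q^*\cap\nr(\Q^{2\times 2})=\Q^*$, i.e.\ by $\pm 1$, and only one sign gives an imaginary quadratic field. Hence exactly one extension is symplectic. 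You should either invoke Theorem~\ref{QA} here or reproduce this reduced-norm argument.

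\medskip
Your treatment of $E(G)=1$ via the double centraliser ($C_G(N)$ is a finite subgroup of $\End(N)^*\isom\GL_2(\Q)$, hence contains no quasisimple group) is correct, as is your derivation of $\B(T_n)=H_n$ once a \spimf\ $G$ with $F(G)\isom 2_+^{1+2n}$ is known to exist; but that existence is exactly what the missing maximality argument is needed for.
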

\begin{proof}
Since $\Z[\sqrt{-2}]$ is a PID, $M_n$ has a $\Z[\sqrt{-2}]$-basis. With respect to this basis, its automorphism group $G_n$ is a finite subgroup of $\GL_{2^n}(\Z[\sqrt{-2}])$. By explicit calculations in \MAGMA\ \cite{Magma} one checks that for $n\in\{2,3\}$ the $\Z$-span $\left\langle G_n \right\rangle_\Z$ equals $\Z[\sqrt{-2}]^{2^n \times 2^n}$. For $n \ge 4$ it follows from $M_n = M_{n-2} \tensordown{\Z[\sqrt{-2}]}M_2$ that $G_2 \tensor G_{n-2} \subset G_{n}$ (using appropriate bases). By induction, one gets
$$\Z[\sqrt{-2}]^{2^n \times 2^n} = \left\langle G_{n-2} \right\rangle_\Z \tensordown{\Z[\sqrt{-2}]} \left\langle G_{2} \right\rangle_\Z \subseteq \left\langle G_{n} \right\rangle_\Z \subseteq \Z[\sqrt{-2}]^{2^n \times 2^n}\:.$$
In particular, each $G_n$-invariant $\Z[\sqrt{-2}]$-lattice is a multiple of $M_n$, since $\Z[\sqrt{-2}]$ has class number 1. Thus $G_n$ is a maximal finite subgroup of $\GL_{2^n}(\Q(\sqrt{-2}))$. But any finite symplectic supergroup of $G:= {}_{\sqrt{-2}}[2_+^{1+2n}.(O^+_{2n}(2) \DP 2)]_{2^n}$ comes from a finite supergroup of $\calH_n\isom G_n < \GL_{2^n}(\Q(\sqrt{-2}))$. Thus $G$ is \simf{}.

As explained in the beginning of this section, to prove that $\B(T_n)$ is conjugate to $H_n$, it suffices to show that $T_n \unlhd \B(T_n)$. But this is clear from Lemma \ref{lemmabravais} since (a rational constituent of) $F(G)$ is conjugate to $T_n$.

Suppose now $S < \Sp_{2^{n+1}}(\Q)$ is \spimf\ such that $F(S) \isom 2^{1+2n}$. Then by Lemma \ref{mainlemma}, $F(S)$ must be conjugate to $U$, so one can assume $U < S$. But then $\B(U) \unlhd S$.  Now $\abs{\Out(2^{1+2n})} = 2 [\B(U): U]$ shows that $G$ and $S$ contain $\B(U)$ with index $2$. Since $\End(U) \isom \Q^{2\times 2}$ it follows from Theorem \ref{QA} that $G$ is the (up to conjugacy) unique symplectic extension of $\B(U)$ by $C_2$.
\end{proof}

\subsection{The group $p_+^{1+2n}$}

In this section, let $p$ be an odd prime. In this section, a family of irreducible symplectic matrix groups in dimension $p^n(p-1)$ is described which will be maximal finite in the case that $p$ is a Fermat prime, i.e. $p-1$ is a power of two.

Let $T_n^{(p)} \isom p_+^{1+2n}$ be the $n$-fold tensor product of $T_1^{(p)}$ where $T_1^{(p)}$ is the subgroup of $\GL_p(\Q(\zeta_p))$ generated by the diagonal matrix $\Diag(1, \zeta_p, \dots, \zeta_p^{p-1})$ and the permutation matrix corresponding to the $p$-cycle $(1,\dots,p)$. Further let $H_n^{(p)} = N_{U_{p^n}(\Q(\zeta_p))}(T_n^{(p)})$. By \cite{Winter} $H_n^{(p)}$ is isomorphic to a subgroup of $C_2 \times p_+^{1+2n}.\Sp_{2n}(p)$, since the group of outer automorphisms which act trivially on the center of $T_n^{(p)}$ is isomorphic to $\Sp_{2n}(p)$. 

In \cite[Section 4]{Wall} Wall constructs a $(p-1)p^n$ dimensional rational lattice on which $C_2 \times p_+^{1+2n}.\Sp_{2n}(p)$ acts. Thus $H_n^{(p)} = \B(T_n^{(p)}) \isom C_2 \times p_+^{1+2n}.\Sp_{2n}(p)$ (if $T_n^{(p)} \unlhd \B(T_n^{(p)})$ it assumed). Now $H_n^{(p)}$ gives rise to a finite subgroup of $\Sp_{p^n(p-1)}(\Q)$ which will be denoted by ${}_{\zeta_p}[\pm p_+^{1+2n}.\Sp_{2n}(p)]_{p^n}$.

\begin{theorem}\label{Fermat}
If $p$ is a Fermat prime, then ${}_{\zeta_{p}}[\pm p_+^{1+2n}.\Sp_{2n}(p)]_{p^n}$ is a \simf\ subgroup of $\Sp_{p^n(p-1)}(\Q)$.
\end{theorem}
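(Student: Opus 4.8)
The plan is to carry over the argument of Theorem~\ref{e2plus} almost verbatim, replacing the prime $2$ by the Fermat prime $p$ and the ring $\Z[\sqrt{-2}]$ by $\Z[\zeta_p]$. Write $p-1=2^a$. Wall's $(p-1)p^n$-dimensional rational lattice carrying $H_n^{(p)}$ has a natural $\Z[\zeta_p]$-structure together with a Hermitian form fixed by $H_n^{(p)}$; call this $\Z[\zeta_p]$-lattice $M_n^{(p)}$, so that $G:={}_{\zeta_p}[\pm p_+^{1+2n}.\Sp_{2n}(p)]_{p^n}$ is the symplectic matrix group it gives rise to (it is rationally irreducible and symplectic by construction, its commuting algebra $\Q(\zeta_p)$ being totally complex). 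The first step is to show, by the same computation that identifies $M_n$ with the $n$-fold tensor power of $M_1$ in the case $p=2$, that $M_n^{(p)} = M_1^{(p)} \tensordown{\Z[\zeta_p]} \dots \tensordown{\Z[\zeta_p]} M_1^{(p)}$. Here one uses that $\Z[\zeta_p]$ is a principal ideal domain, so all the lattices in sight are free $\Z[\zeta_p]$-modules; this holds for every Fermat prime arising within the range $2n\le 22$ treated in this paper (namely $p\in\{3,5\}$). As a consequence, with respect to compatible $\Z[\zeta_p]$-bases one gets $T_k^{(p)}\tensor T_{n-k}^{(p)} = T_n^{(p)}$ and $H_k^{(p)}\tensor H_{n-k}^{(p)} \le H_n^{(p)}$ for $0<k<n$.

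The second step is to show that, with respect to such a basis, $\left\langle H_n^{(p)}\right\rangle_{\Z[\zeta_p]} = \Z[\zeta_p]^{p^n\times p^n}$ is the maximal order. The $\Z[\zeta_p]$-span of $T_n^{(p)}$ alone is only a non-maximal order (its index in the maximal order is a power of the unique prime $\mathfrak{p}$ above $p$); the point is that the $\Sp_{2n}(p)$-part, acting through the Weil representation, supplies enough further elements to fill out the maximal order. For the base case ($n=1$, and if necessary also $n=2$) this is an explicit calculation in \MAGMA. For larger $n$ one fixes some $k$ for which $\left\langle H_k^{(p)}\right\rangle_{\Z[\zeta_p]}$ is already maximal and concludes from the tensor decomposition that
$$\Z[\zeta_p]^{p^{n-k}\times p^{n-k}} \tensordown{\Z[\zeta_p]} \Z[\zeta_p]^{p^k\times p^k} = \Z[\zeta_p]^{p^n\times p^n} \subseteq \left\langle H_n^{(p)}\right\rangle_{\Z[\zeta_p]} \subseteq \Z[\zeta_p]^{p^n\times p^n}\:,$$
hence equality, by induction on $n$.

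Granting this, the proof finishes exactly as for Theorem~\ref{e2plus}: since $\left\langle H_n^{(p)}\right\rangle_{\Z[\zeta_p]}$ is maximal and $\Z[\zeta_p]$ has class number one, every $H_n^{(p)}$-invariant $\Z[\zeta_p]$-lattice is a scalar multiple of $M_n^{(p)}$, so $H_n^{(p)}$ is a maximal finite subgroup of $\GL_{p^n}(\Q(\zeta_p))$. Therefore every finite symplectic supergroup of $G$ arises from a finite supergroup of $H_n^{(p)}$ inside $\GL_{p^n}(\Q(\zeta_p))$ and hence equals $H_n^{(p)}$; thus $G$ is \simf. Finally $T_n^{(p)}\unlhd \B(T_n^{(p)})$ follows from Lemma~\ref{lemmabravais}(1), since a rational constituent of $F(G)$ is conjugate to $T_n^{(p)}$; this retroactively justifies the identification $\B(T_n^{(p)})\isom C_2\times p_+^{1+2n}.\Sp_{2n}(p)$ made before the theorem.

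The main obstacle is the second step. For $p=2$ one needed the two base cases $n\in\{2,3\}$ precisely because $\left\langle T_1\right\rangle_\Z = \left\langle D_8\right\rangle_\Z$ is badly non-maximal; here one must understand, in the basis adapted to $M_n^{(p)}$, how the Weil representation of $\Sp_{2n}(p)$ repairs the $\mathfrak{p}$-adic defect of the span of $T_n^{(p)}$, and carry out the base case(s) computationally. This is also the step into which the class-number-one hypothesis enters: in the stated generality (all Fermat primes $p$, not merely those with $2n\le 22$) one additionally needs $\Q(\zeta_p)$ to be a principal ideal domain, or a substitute argument avoiding it.
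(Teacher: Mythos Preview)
Your approach is substantially heavier than the paper's and contains a genuine gap: you never use the hypothesis that $p$ is a Fermat prime.

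The paper's proof is three lines. The commuting algebra of $G$ is $\Q(\zeta_p)$. Because $p$ is a Fermat prime, $\Gal(\Q(\zeta_p)/\Q)$ is cyclic of $2$-power order, so the subfields of $\Q(\zeta_p)$ are linearly ordered and the unique maximal proper one is the totally real field $\Q(\zeta_p+\zeta_p^{-1})$. Hence $\Q(\zeta_p)$ is its own unique minimal totally complex subfield, and every finite symplectic supergroup of $G$ still has commuting algebra exactly $\Q(\zeta_p)$, so it comes from a finite supergroup of $H_n^{(p)}$ in $\GL_{p^n}(\Q(\zeta_p))$. The maximality of $H_n^{(p)}$ there is then simply quoted from \cite[Theorem~7.3]{cliff}.

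Your proposal tries to reprove that cited maximality result from scratch via the tensor/span argument of Theorem~\ref{e2plus}, but the crucial reduction --- your sentence ``Therefore every finite symplectic supergroup of $G$ arises from a finite supergroup of $H_n^{(p)}$ inside $\GL_{p^n}(\Q(\zeta_p))$'' --- is asserted without justification. This is \emph{exactly} where the Fermat hypothesis is needed, and it is not automatic: for a non-Fermat prime such as $p=7$ the field $\Q(\zeta_7)$ contains the proper totally complex subfield $\Q(\sqrt{-7})$, so a symplectic supergroup might have commuting algebra $\Q(\sqrt{-7})$ and sit in $\GL_{3\cdot 7^n}(\Q(\sqrt{-7}))$ rather than in $\GL_{7^n}(\Q(\zeta_7))$. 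In Theorem~\ref{e2plus} this step was free because $\Q(\sqrt{-2})$ is imaginary quadratic; carrying that proof over ``verbatim'' hides the one place where the present situation is genuinely different. As you yourself note, your route also imports an unnecessary class-number-one assumption on $\Z[\zeta_p]$ and computational base cases, both of which the paper's citation avoids.
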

\begin{proof}
The commuting algebra $C$ of ${}_{\zeta_{p}}[\pm p_+^{1+2n}.\Sp_{2n}(p)]_{p^n}$ is isomorphic to $\Q(\zeta_p)$. Since $p$ is a Fermat prime, $C$ has only one maximal subfield, which must be totally real. Thus any symplectic supergroup must come from a finite subgroup of $\GL_{p^n}(\Q(\zeta_p))$. But $H_n^{(p)}$ is maximal finite in $\GL_{p^n}(\Q(\zeta_p))$ according to \cite[Theorem 7.3]{cliff}.
\end{proof}

\section{Dimension 2p}\label{sec2p}

Let $p\ge 5$ be prime. The structure of \spimf\ subgroups of $\Sp_{2p}(\Q)$ is rather restricted.

\begin{lemma}
If $G < \Sp_{2p}(\Q)$ is \spimf\ then $F(G)$ is cyclic of order $2, 4$ or $6$.
\end{lemma}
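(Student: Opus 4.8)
The plan is to pin down the possible prime divisors of $|F(G)|$ using Minkowski's bound (Lemma \ref{Minkowski}) together with the structural constraints on $O_\ell(G)$ coming from Lemma \ref{mainlemma} and Hall's theorem (Theorem \ref{Hall}). First I would note that $F(G)$ is the direct product of the $O_\ell(G)$ over primes $\ell$, so it suffices to analyze each $O_\ell(G)$ separately. By Lemma \ref{mainlemma}(2), if $O_\ell(G)\ne 1$ then $\ell-1$ divides $2p$; since $p\ge 5$ is prime, $\ell-1\in\{1,2,p,2p\}$, so $\ell\in\{2,3\}$ or $\ell\in\{p+1,2p+1\}$. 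Minkowski's bound in degree $2p$ shows that $p+1$ and $2p+1$ can only occur for very small $p$ (and one checks they do not give nontrivial $\ell$-part of the \emph{Fitting} group, as the relevant Sylow subgroup would have order $\ell$ and be non-normal by Lemma \ref{mainlemma}(1), or is ruled out because $\ell > 2p+1$ is impossible); in any case the generic argument forces $\ell\in\{2,3\}$, so $F(G)$ is a $\{2,3\}$-group.

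Next I would bound the order of $O_3(G)$ and $O_2(G)$. Again by Lemma \ref{mainlemma}(3) every abelian characteristic subgroup of $O_\ell(G)$ is cyclic, so Hall's theorem applies: $O_3(G)$ is a central product of an extraspecial group of exponent $3$ (or trivial) with a cyclic group, and $O_2(G)$ is a central product of $2^{1+2m}_\pm$ (or trivial) with a cyclic, dihedral, quasidihedral or generalized quaternion $2$-group. The key numerical input is that $\langle O_\ell(G)\rangle_\Q$ embeds in $\Q^{2p\times 2p}$, and by Lemma \ref{mainlemma}(1) the natural representation of $O_\ell(G)$ is isotypic, so $\dim_\Q\langle O_\ell(G)\rangle_\Q$ divides $2p$ times the Schur index, forcing the faithful irreducible $\Q$-representation of $O_\ell(G)$ to have $\Q$-dimension dividing $2p$. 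For $\ell=3$: a nontrivial extraspecial $3^{1+2m}_+$ has faithful irreducibles of $\Q$-dimension $3^m\cdot 2$, which divides $2p$ only if $m=0$; and $C_{3^k}$ has faithful irreducible of $\Q$-dimension $3^{k-1}\cdot 2$, so $k\le 1$. Hence $O_3(G)$ is cyclic of order dividing $3$. For $\ell=2$: $2^{1+2m}_\pm$ has faithful irreducible of $\Q$-dimension $2^m$ resp. $2^{m+1}$, and the cyclic/dihedral/quaternion part contributes further tensor factors whose $\Q$-dimensions are powers of $2$ times at most $1$; since $2p=2\cdot(\text{odd prime})$, the $2$-part of $2p$ is just $2$, so the faithful $\Q$-irreducible of $O_2(G)$ has dimension dividing $2$, which forces $O_2(G)$ to be cyclic of order dividing $4$ (the dihedral and quaternion cases of order $8$ already have $\Q$-dimension $2$ but have a non-cyclic abelian characteristic subgroup only in... — more simply, $D_8$ and $Q_8$ are \emph{not} of the form allowed to be the whole Fitting group since their enveloping algebra forces a strictly larger $O_2$, or are excluded because $G$ would then not be symplectic primitive; here I would instead just check directly that $D_8,Q_8$ have faithful $\Q$-irreducible of dimension $2$ with $\Q^{2\times 2}$ resp. a quaternion algebra as endomorphism ring, and rule them out using that $F(G)$ must act with cyclic endomorphism ring on each homogeneous component — this is where I expect to lean on the classification setup). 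The upshot is $O_2(G)$ cyclic of order $1,2$ or $4$ and $O_3(G)$ cyclic of order $1$ or $3$, whence $F(G)$ is cyclic of order $2,4$ or $6$, the remaining point being that $O_2(G)=1$ is impossible because $-I_{2p}\in Z(G)\subseteq F(G)$ always.

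The main obstacle will be the fine analysis of the $2$-part: ruling out $O_2(G)\in\{D_8,Q_8,C_8\}$ and larger groups. For $C_8$ and higher cyclic $2$-groups the faithful $\Q$-irreducible has dimension $\ge 4$, which does not divide $2p$, so those are immediate; the genuine subtlety is $D_8$ and $Q_8$, which \emph{do} have a $2$-dimensional faithful $\Q$-irreducible. Here I would argue that if $O_2(G)\cong D_8$ or $Q_8$ then, since its $\Q$-enveloping algebra on the (unique, by Lemma \ref{mainlemma}(1)) homogeneous component is $\Q^{2\times2}$ or a rational quaternion algebra, the group $\B(O_2(G))$ is already quite large — in fact one can invoke the generic computation in the Proposition of Section \ref{secprim} for $N=2^{1+2}_+$ (i.e. $D_8\Ydown$ trivial, $\B(N)=N.O_2^+(2)$, which is imprimitive) to see that $G$ would be forced to contain a normal subgroup whose $\B$ is imprimitive in dimension $2$, contradicting that $G$ is symplectic primitive of degree $2p$ with $p$ odd; alternatively $Q_8$ generates a quaternion division algebra and Theorem \ref{QA} (the quaternion-algebra method promised in Section \ref{secmet}) applies to replace it by a larger normal subgroup, again contradicting $F(O_2(G))=O_2(G)$. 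Either way the conclusion is that $O_2(G)$ has a cyclic abelian characteristic subgroup covering it, i.e. $O_2(G)$ is cyclic, and the order bound from Minkowski/dimension reasons gives $|O_2(G)|\mid 4$.
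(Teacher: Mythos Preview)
Your proposal has two genuine gaps.

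First, the elimination of $\ell = 2p+1$ is not established. Minkowski's bound allows a Sylow $(2p+1)$-subgroup of order $2p+1$, and if $q := 2p+1$ is prime then $O_q(G) \cong C_q$ acts via its (irreducible!) $\Q$-representation of degree $q-1 = 2p$, so Lemma~\ref{mainlemma}(1) yields no contradiction whatsoever. The paper instead observes that $q \equiv -1 \pmod 4$ and $q \ge 11$, and then invokes Theorem~\ref{pminus1}: any \simf\ $G < \Sp_{q-1}(\Q)$ with $q \mid |G|$ is conjugate to ${}_{\sqrt{-q}}[\pm \LGruppe_2(q)]_{(q-1)/2}$, which has trivial $q$-core. (Note also that $p+1$ is even, hence never prime for $p \ge 5$; there is nothing to check there.)

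Second, and more importantly, your $\Q$-dimension argument cannot rule out $O_2(G) \cong D_8$: its faithful $\Q$-irreducible has dimension $2$, which divides $2p$. (For $Q_8$ you misstate the dimension: the faithful $\Q$-irreducible is $4$-dimensional with endomorphism ring $Q_{\infty,2}$, so $Q_8$ \emph{is} excluded, since $4 \nmid 2p$.) Your attempted fixes via $\B(D_8)$ or Theorem~\ref{QA} do not produce a contradiction. The paper's key step, which you miss entirely, is to pass from $\Q$ to an imaginary quadratic field: since $G$ is irreducible symplectic and not cyclic (a cyclic $\{2,3\}$-group cannot be irreducible in degree $2p$), the minimal totally complex subfield $K \le \End(G)$ has degree $2$ (degree $2p$ would force $G \le K^*$, hence cyclic), so $G$ embeds in $\GL_p(K)$. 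Now Hall's theorem applied over $K$ does the job cleanly: $O_2(G)$ acts isotypically on $K^p$ with constituent of $K$-dimension a power of $2$ dividing the odd prime $p$, hence dimension $1$; thus $O_2(G)$ embeds in $K^*$ and is cyclic of order dividing $4$. The same argument handles $O_3(G)$, and since an imaginary quadratic field contains only $4$th or $6$th roots of unity one gets $|F(G)| \ne 12$ for free --- a point your write-up also omits, since $C_4 \times C_3 = C_{12}$ is not in $\{C_2, C_4, C_6\}$.
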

\begin{proof}
Clearly, if $q \ge 5$ is a prime such that $O_q(G) \ne 1$ then $q=2p+1$ by Lemma \ref{mainlemma}. In particular, $q\equiv -1 \mod 4$ and $q \ge 11$. This contradicts Theorem \ref{pminus1}. So $F(G) = O_2(G)O_3(G)$. If $G$ would be cyclic, then $G=F(G)= O_2(G)O_3(G)$ is reducible. So $G$ embeds into $\GL_p(K)$ for some imaginary quadratic number field $K$. Hence the result follows from Hall's classification (see Theorem \ref{Hall}).
\end{proof}

Hence the generalized Fitting subgroup $F^*(G)$ of a \spimf\ group $G < \Sp_{2p}(\Q)$ cannot be soluble. Thus $F^*(G)$ is either reducible with $\Q^{2\times 2}$ as commuting algebra or irreducible and its commuting algebra is an imaginary quadratic number field. Thus $G$ can easily be recovered from $F^*(G)$ by the methods in the next section.

\section{Some methods}\label{secmet}

\subsection{Normal subgroups of index $2^k$}\label{subsecind2}

If $G < \Sp_{2n}(\Q)$ is maximal finite, then $G/\B(F^*(G))$ is very often a $2$-group. Thus two criteria are given that eliminate some candidates for $F^*(G)$ in these cases.

The first lemma is an analogon to \cite[Corollary III.4]{FRMG}.

\begin{lemma}
Let $N$ be a subgroup of a \spimf\ group $G < \Sp_{2n}(\Q)$ of index $2$. If $N$ is rationally reducible then $\End(N) \isom L^{2\times 2}$ for some totally real number field $L$.
\end{lemma}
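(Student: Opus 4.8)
The plan is to exploit the hypothesis that $G$ is symplectic together with the fact that, because $G$ is \spimf{}, the endomorphism ring $\End(G)$ is an imaginary quadratic number field, say $K$, and $\Forms_{skew}(G) = eF$ for a generator $e$ of $K$ over $\Q$ with $e$ antiselfadjoint (by Lemma \ref{char_symp} and its proof). First I would analyse the restriction of the natural representation $\triangle$ to $N$. Since $[G:N]=2$ and $N$ is rationally reducible while $G$ is rationally irreducible, the module $\Q^{1\times 2n}$ restricted to $N$ splits as $V \oplus V^\sigma$ where $V$ is a rationally irreducible $\Q N$-module and $\sigma$ is the nontrivial coset of $N$ in $G$; moreover $V \not\isom V^\sigma$ (otherwise the restriction would be isotypic and, by Clifford theory together with irreducibility of $\triangle$, one could not get a genuine drop to a reducible module of this shape — I would spell this out). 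Thus $\End(N) \isom E \times E^\sigma$ where $E = \End_{\Q N}(V)$, and conjugation by $\sigma$ swaps the two factors, so $E \isom E^\sigma$ as $\Q$-algebras; in particular $\End(N)$ is a direct product of two copies of a single skewfield $E$.

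Next I would pin down $E$ using the symplectic form. The form space $\Forms(N)$ contains the invertible skewsymmetric $J \in \Forms_{skew}(G) \subseteq \Forms_{skew}(N)$. Under the block decomposition $\Q^{1\times 2n} = V \oplus V^\sigma$, invertibility of $J$ forces it to pair $V$ nondegenerately with $V^\sigma$ (the restriction of $J$ to $V$ itself lands in $\Forms(N)\cap(\text{maps }V\to V^*)$, but $V\not\isom V^\sigma$ means $V$ and $(V^\sigma)^*$ need not be related, so I should argue via the primitive idempotents $f, f^\sigma$ of $\End(N)$: $fJ\transpose{f}$ and $f^\sigma J\transpose{(f^\sigma)}$ both lie in $\Forms(N)$, and summing over the two idempotents and using that $fJ\transpose{f}$ corresponds to a map $V \to V$ in the wrong variance shows these diagonal blocks vanish, hence $J$ is purely off-diagonal). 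Consequently the $V$-block of $\Forms(N)$ identifies $V^\sigma$ with the $E$-linear dual of $V$, which is the standard setup of a nondegenerate $E$-valued form; symplecticity of $G$ does not put any reality obstruction on $E$ itself — rather, the point is that the pairing is between the two distinct components, so the "symplectic" structure is realized as a hyperbolic pairing of $V$ with $V^\sigma$, and there is no antiselfadjoint endomorphism needed \emph{within} a single component.

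The key step is then: $E$ must be a totally real number field. Here I would use that $G = \Aut_K(L,F)$ is maximal finite and symplectic, and reason about $\End(G) \subseteq \End(N)$. We have $K = \End(G) \hookrightarrow E \times E^\sigma$; since $G/N$ acts on $N$ and swaps the two factors, $K$ maps \emph{diagonally} into $E \times E^\sigma$, so $K \hookrightarrow E$ (projecting to the first factor is injective on $K$ because an element of $\End(G)$ killed on $V$ would be killed on $V^\sigma$ too by $\sigma$-equivariance, hence zero). Now suppose for contradiction $E$ is not totally real; then by Lemma \ref{char_symp} applied to $\triangle_1(G)$ — more precisely to the primitive constituent — $E$ contains a totally complex subfield, which would produce an invertible element of $\Forms_{skew}$ already on the single component $V$, contradicting what the previous paragraph established (the $V$-block of $\Forms(N)$ consists only of maps whose variance forces them to be zero as self-pairings of $V$, because $V \not\isom V^\sigma$ and $V$ is \emph{not} self-dual as an $E$-module — this is exactly where rational reducibility of $N$ "uses up" the symplectic form off the diagonal). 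Hence $E$ is totally real, and $\End(N) \isom E \times E \isom L^{2\times 2}$ is false as stated unless the two simple components are amalgamated — so I must be careful: the claim in the lemma is $\End(N)\isom L^{2\times 2}$, a \emph{simple} algebra, which means the two constituents $V, V^\sigma$ are in fact \emph{isomorphic}. So I should revisit: the genuine situation is that $\Q^{1\times 2n}|_N \isom V \oplus V$ with $V$ rationally irreducible and self-conjugate, $G/N$ acting on the homogeneous component, and then $\End(N) = \End_{\Q N}(V \oplus V) \isom E^{2\times 2}$ where $E = \End_{\Q N}(V)$; rational reducibility just means $N$ is not irreducible, i.e. $V$ appears with multiplicity $2$, and the earlier "$V\not\isom V^\sigma$" alternative is excluded because in that case $N$ would actually be rationally irreducible on each block summed — no: if $V\not\isom V^\sigma$ then $N$ is still reducible. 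I would resolve this dichotomy by noting that in the $V\not\isom V^\sigma$ case, $\End(N)\isom E\times E^\sigma$ is not of the form $L^{2\times 2}$, so I must \emph{rule this case out}: and indeed it is ruled out because then $\Forms(N)$ would be $E\times E^\sigma$ (one-dimensional over each factor as a module) and an invertible skewsymmetric form, being forced off-diagonal, would require $V^\sigma \isom V^*$; but $V^* \isom V^{\sigma'}$ for the duality-conjugate, and chasing the $G/N$-action shows $\sigma' $ and $\sigma$ coincide, giving $V^\sigma \isom V^*$, still consistent — so I'd instead argue via $\End(G)=K$ imaginary quadratic: $K$ into $E\times E^\sigma$ diagonally with $E$ totally real forces $K$ totally real, contradiction. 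Therefore the $V\not\isom V^\sigma$ case is impossible, we are in the isotypic case $\End(N)\isom E^{2\times 2}$ with $E$ totally real — and $E$ totally real follows because otherwise $E$ contains an imaginary quadratic field, making $N$ already symplectic on $V$ and contradicting maximality/primitivity of $G$ (an $E$-linear symplectic structure on $V$ would let $G$ be built from a smaller symplectic group, violating \spimf{}ness via the tensor-decomposition argument of Lemma \ref{irred_OK}). Setting $L := E$ gives $\End(N) \isom L^{2\times 2}$ with $L$ totally real, as claimed.

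The main obstacle I expect is the careful Clifford-theoretic bookkeeping distinguishing the isotypic case (multiplicity two) from the case of two non-isomorphic conjugate constituents, and showing the latter cannot occur for a \emph{symplectic} $G$ — the natural-looking "off-diagonal hyperbolic form" does not immediately yield a contradiction, so the clean way through is to leverage $\End(G) = K$ being an imaginary quadratic field embedding diagonally into $\End(N)$, forcing both constituents to carry the same totally real $E$ and then forcing $K \subseteq E$ to be totally real, which is absurd. Once that case is killed, the remaining work — identifying $\End(N)$ with $L^{2\times 2}$ and arguing $L$ is totally real via the fact that an imaginary quadratic subfield of $\End_{\Q N}(V)$ would contradict $G$ being symplectic \emph{primitive} — is routine given Lemmas \ref{char_symp} and \ref{mainlemma}.
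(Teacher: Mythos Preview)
Your proposal eventually gropes toward the right idea, but it contains a genuine gap and two avoidable confusions.

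First, the detour through the case $V \not\isom V^\sigma$ is unnecessary: Lemma~\ref{mainlemma} (which you cite only at the very end) says directly that for any normal subgroup of a symplectic primitive group the natural character is a multiple of a single rationally irreducible character. So $\triangle|_N \isom 2\triangle_1$ from the outset, and $\End(N)\isom L^{2\times 2}$ with $L=\End_{\Q N}(\triangle_1)$ a division algebra. The paper uses this silently; you should too, and discard the entire non-isotypic discussion.

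Second, your assertion that $\End(G)$ is an imaginary quadratic field is false in general (look at $\End(G)\isom\Q(\zeta_p)$ or $\Q(\zeta_{2^{n-1}}-\zeta_{2^{n-1}}^{-1})$ elsewhere in the paper) and in any case plays no role in the correct argument.

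Third --- and this is the real gap --- you never establish the mechanism by which ``$\triangle_1(N)$ symplectic'' contradicts symplectic primitivity of $G$. You gesture at ``the tensor-decomposition argument of Lemma~\ref{irred_OK}'', but that lemma concerns a different situation. What is needed is the concrete wreath-product embedding: writing $\triangle=\mathrm{Ind}_N^G\triangle_1$ one finds $G$ conjugate to a subgroup of $\triangle_1(N)\wr C_2$, so if $\triangle_1(N)$ were symplectic then $G$ would be symplectic imprimitive. This is exactly what the paper does, and it yields $\Forms_{skew}(\triangle_1(N))=0$. From there the paper finishes by tensoring with $\R$: the vanishing of skew forms forces the $\R$-constituents $\delta_i$ to be pairwise non-isomorphic with $\End(\delta_i)\isom\R$, whence $L\otimes_\Q\R\isom\R^s$ and $L$ is a totally real field. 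Your alternative (``$L$ not totally real $\Rightarrow$ $L$ contains a totally complex subfield'') is also valid here, since $L$ carries a positive involution and is hence totally real or CM, but you must first supply the wreath-product step to get $\Forms_{skew}(\triangle_1(N))=0$.
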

\begin{proof}
Let $g \in G - N$.
By Cifford theory, the natural representation $\triangle$ of $G$ splits into $\triangle_1 + \triangle_1^g$ for some irreducible $\triangle_1 \colon N \to \GL_n(\Q)$. Thus $G$ is conjugate to $\left \langle I_2 \tensor \triangle_1(N), \left( \begin{smallmatrix} 0 & I_n \\ \triangle_1(g^2) & 0 \end{smallmatrix} \right) \right\rangle \le \triangle_1(N)\wr C_2$. Since $G$ is symplectic primitive and $\triangle_1(N)$ is rationally irreducible this implies $\dim_\Q(\Forms_{skew}(\triangle_1(N))) = 0$.

Let $\delta\colon N \to \GL_n(\R), h \mapsto \triangle_1(h)$ decompose into $\R$-irreducible representations $\delta_1, \dots, \delta_s$. It follows from the above that $\Forms(\delta(N)) = \Forms_{sym}(\delta(N))$. In particular, this implies that the $\delta_i$ are pairwise nonisomorphic and $\End(\delta_i(N)) \isom \R$.
Hence $L \tensor_\Q \R \isom \oplus_{i=1}^s\R$ and therefore $L$ is a totally real number field.
\end{proof}

The smallest example of such a group $N$ is $N = \left\langle -I_2 \right\rangle \unlhd \left\langle \left( \begin{smallmatrix} 0 & 1 \\ -1 & 0 \end{smallmatrix} \right) \right\rangle < \Sp_{4}(\Q)$ which has $\Q^{2\times 2}$ as commuting algebra.

\begin{example}
The Fitting subgroup of a \spimf\ group $G < \Sp_8(\Q)$ is not isomorphic to $Q_8$.
\end{example}
\begin{proof}
 Suppose otherwise. Then $\End(F(G)) \isom Q_{\infty,2}^{2\times 2}$ where $Q_{\infty,2}$ denotes the quaternion algebra over $\Q$ ramified only at $2$ and $\infty$. Thus $E(G)=1$ by \cite{HissMalle}. Hence $G/F(G) \le \Out(F(G)) \isom S_3$. Since $\B(F(G))\isom \SL_2(3)$ one gets $[G: \B(F(G))] \le 2$. This contradicts the result above since $\End(F(G))  = \End(\B(F(G))) \isom Q_{\infty, 2}^{2\times 2}$. 
\end{proof}

If $n$ is not a power of $2$, the following lemma can be used to rule out several candidates for normal subgroups.

\begin{lemma}
Let $G< \Sp_{2n}(\Q)$ be rationally irreducible and symplectic primitive. Let $N \unlhd G$ such that $[G:N] =2^k$. If $\triangle$ denotes the natural representation of $N$ then $\triangle = 2^\ell \cdot \delta$ for some irreducible representation $\delta$ of $N$ and some $0 \le \ell \le k$.
\end{lemma}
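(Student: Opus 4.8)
The statement is about a normal subgroup $N$ of index $2^k$ in a rationally irreducible symplectic primitive group $G$: I want to show the natural character of $N$ is $2^\ell$ times a single rationally irreducible character, with $0 \le \ell \le k$. By Lemma~\ref{mainlemma}, the restriction of $\triangle$ to $N$ is already a multiple $c\cdot\delta$ of a single rationally irreducible representation $\delta$; the only thing to prove is that the multiplicity $c$ is a power of $2$ dividing $2^k$. The strategy is a standard Clifford-theoretic descent through a composition series of $G/N$ of length $k$, using at each step that the relevant index is $2$.

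\textbf{Key steps.} First I would reduce to the case $k=1$ by induction: choose a chain $N = N_0 \unlhd N_1 \unlhd \dots \unlhd N_k = G$ with each $[N_{i+1}:N_i]=2$; each $N_i$ is normal in $G$ (or at least one can arrange the $N_i$ normal in $N_{i+1}$, which is all Clifford theory needs), so it suffices to understand how the multiplicity changes when passing from $N_{i+1}$ to $N_i$. So the heart of the matter is: if $H \unlhd G$ with $[G:H]=2$ and the natural representation of $G$ is a multiple $c_G\cdot\delta_G$ of a rationally irreducible $\delta_G$, then the natural representation of $H$ is $c_H\cdot\delta_H$ with $c_H \in \{c_G, 2c_G\}$. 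Here $c_H/c_G \in \{1,2\}$ because restricting the irreducible $\delta_G$ to the index-$2$ subgroup $H$ either stays irreducible or splits into two conjugate irreducibles (Clifford theory for rational representations: $\delta_G|_H$ is $\delta_H$ or $\delta_H \oplus \delta_H^g$, and in the latter case both summands are rationally irreducible of equal degree; the two summands being $G$-conjugate forces them into a single homogeneous component only after multiplying — but since $G$ is primitive, $H$'s natural module is homogeneous by Lemma~\ref{mainlemma}, so when $\delta_G|_H$ splits we actually get $2c_G$ copies of a single $\delta_H$). Running this down the chain multiplies $c$ by a factor of $1$ or $2$ at most $k$ times, giving $c = 2^\ell$ with $0 \le \ell \le k$ (the base case $c_G=1$ at the top, since $G$ itself is rationally irreducible).

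\textbf{The main obstacle.} The subtle point is the interplay between the two ways the multiplicity can change: the ordinary Clifford-theoretic splitting of $\delta_G|_H$ (which could a priori split into more than two pieces over $\mathbb{Q}$, not just two, since rational Clifford theory is governed by the Galois action as well) versus the primitivity constraint that forces homogeneity. I expect the careful argument to be: over a splitting field the ordinary character $\chi$ of $\delta_G$ restricts to $H$ either irreducibly or as a sum of two $G$-conjugates; the character field can only grow by a factor dividing $2$; hence the rational-irreducible constituent $\delta_H$ of $\delta_G|_H$ satisfies $\deg\delta_H \in \{\deg\delta_G, \tfrac12\deg\delta_G\}$, and comparing dimensions $2n = c_G\deg\delta_G = c_H\deg\delta_H$ inside the \emph{homogeneous} (by primitivity) $\mathbb{Q}H$-module $\mathbb{Q}^{1\times 2n}$ pins $c_H/c_G$ to $1$ or $2$. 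I would want to invoke Lemma~\ref{mainlemma} at each stage to guarantee homogeneity of the restriction, since that is exactly what rules out the restriction breaking into inequivalent pieces and keeps the bookkeeping to a single multiplicity $c_H$.
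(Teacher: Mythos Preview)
Your approach is correct and is the natural Clifford-theoretic induction. The paper itself gives no argument beyond citing \cite[Lemma III.4]{ERMG} and remarking that the proof there carries over mutatis mutandis to the symplectic primitive setting, so there is nothing substantive to compare against; what you sketch is presumably exactly that argument.

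Two small points worth tightening. First, you genuinely need each $N_i \unlhd G$, not merely $N_i \unlhd N_{i+1}$: the homogeneity of $\triangle|_{N_i}$ that lets you speak of a single multiplicity $c_i$ comes from Lemma~\ref{mainlemma}, which requires normality in $G$. Since $G/N$ is a $2$-group and hence nilpotent, a chief series of $G$ through $N$ supplies such a chain, so this is easily arranged; your parenthetical suggesting that $N_i \unlhd N_{i+1}$ alone suffices should be dropped. Second, your ``$\delta_{i+1}|_{N_i}$ is $\delta_i$ or $\delta_i \oplus \delta_i^g$'' is not literally true over $\Q$ (think of the faithful rational irreducible of $C_4$ restricted to $C_2$: one gets $2\delta_i$ with no nontrivial conjugate involved), and the character-field bookkeeping in your last paragraph is delicate once Schur indices enter. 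The clean way to bound the jump is Frobenius reciprocity: from $\delta_{i+1}|_{N_i} = m\,\delta_i$ one sees that $\delta_{i+1}$ is a quotient of the induced module $\delta_i\!\uparrow^{N_{i+1}}$, whence $\deg\delta_{i+1} \le 2\deg\delta_i$ and so $m \le 2$. With these adjustments your induction goes through without further issue.
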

\begin{proof}
The proof given in \cite[Lemma III.4]{ERMG} applies mutatis mutandis.
\end{proof}

\begin{example}
Let $G < \Sp_{12}(\Q)$ be \spimf{}. If $F(G)=O_2(G)$, then $O_2(G)$ is isomorphic to $C_2, C_4$ or $D_8$ and $E(G) \ne 1$.
\end{example}
\begin{proof}
Let $U = \{Q_8,\; C_8,\; D_{16},\; QD_{16},\; D_8 \tensor C_4,\; 2^{1+4}_+\}$. Hall's theorem (see Theorem \ref{Hall}) shows that $O_2(G) \in \{C_2, C_4, D_8\} \cup U$. 
Suppose first that $E(G) = 1$. Then $G/\B(F(G)) \le \Out(F(G))$ is a $2$-group. By the previous result this implies that the natural representation of $\B(F(G))$ must be a multiple of a faithful rationally irreducible representation of degree $3, 6$ or $12$. But $F(G) = O_2(G)$ and thus $\B(F(G))$ does not admit such representations. This implies $E(G) \ne 1$.
Suppose now $O_2(G) \in U$. Then $\End(O_2(G)) \isom Q^{3 \times 3}$ where $Q$ is isomorphic to $\Q$, $\Q(i)$, $\Q(\sqrt{\pm 2})$ or $Q_{\infty, 2}$. But no quasisimple group embeds in to $Q^{3 \times 3}$ (see \cite{HissMalle}). Thus $E(G)=1$ gives the desired contradiction.
\end{proof}

\subsection{m-parameter argument}\label{subsecmpar}

Suppose $U < \GL_{n}(\Q)$ s finite such that $E:= \End(U)$ is a number field of absolute degree $m$ and let $F_0 \in \tpForms(U)$. In particular, this applies to irreducible cyclic matrix groups. If $E^+$ denotes the maximal totally real subfield of $E$ then $\tpForms(U) = E^+ \cdot F_0$. The thesis \cite{ERMG} gives an algorithm which uses this parametrization and ideal arithmetic in $E^+$ to show that every finite supergroup $G\ge U$ fixes (up to conjugacy) one of the forms $cF$ where $c$ runs through some finite set.

Two more definitions are needed to  state the result.

\begin{definition}
For a totally real number field $K$, denote by $\Pi(K)$ a finite set of prime numbers such that
\begin{itemize}
\item For each $x \in K$ there exists some $y \in \Z_K$ whose norm is supported at $\Pi(K)$ such that $xy$ is totally positive. 
\item In each ideal class of $\Z_K$ there exists some integral ideal that contains a natural number whose prime factors come from $\Pi(K)$.
\end{itemize}
Furthermore, if $\ell \in \Z$ set
$$\tilde{\Pi}(\ell, K) = \{p \mid\mbox{$p$ is a prime divisor of $\ell$ or $p \in \Pi(k)$ for some subfield $k$ of $K$}\}\:.$$
\end{definition}

\begin{definition}
Let $L$ be a lattice in $\Q^{1\times n}$ of full rank and let $F \in \Q^{n\times n}$ be symmetric positive definite.
\begin{enumerate}
\item $L^{\#, F} = \{x \in \Q^{n \times 1} \mid xF\transpose{y} \in \Z \mbox{ for all } y \in L\}$
denotes the dual of $L$ with respect to $F$. 
\item The pair $(L, F)$ is said to be integral if $L \subset L^{\#, F}$.
\item The integral pair $(L, F)$ is said to be normalized if the abelian group $L^{\#, F} / L$ has squarefree exponent and its rank is at most $n/2$.
\end{enumerate}
\end{definition}

\begin{theorem}[\protect{\cite[Korollar III.3]{ERMG}}]\label{mparam}
Suppose $U < \GL_{n}(\Q)$ is finite such that $E:= \End(U)$ is a field and denote by $E^+$ its maximal totally real subfield. Let $U < G <\GL_n(\Q)$ be finite and fix some $L \in \ZZ(G)$. Then there exists some $F \in \tpForms(U)$ such that $(L, F)$ is integral and the prime divisors of $\det(L, F)$ are contained in $\tilde{\Pi}(\abs{G}, E^+)$.
\end{theorem}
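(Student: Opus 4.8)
The plan is to treat the choice of $F\in\tpForms(U)$ as the choice of a totally positive scalar in $E^+$, to start from a $G$-invariant positive form, and then to repair it one prime at a time, using the set $\Pi$ to turn the necessary local repairs into a single global scalar.

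First I would exploit the parametrization recalled just before the statement: fixing any $F_0\in\tpForms(U)$, the map $e\mapsto eF_0$ identifies $\Forms(U)$ with $E$, the $F_0$-adjoint is a positive involution on $E$ with fixed field $E^+$, and $\tpForms(U)=\{\alpha F_0\mid\alpha\in E^+\text{ totally positive}\}$; moreover $\End(U)=E$ being a field forces $V:=\Q^{1\times n}$ to be $\Q U$-irreducible, hence $\Q G$-irreducible. Since multiplication by $\alpha\in E^+$ on $V$ has determinant $\pm\Nr_{E^+/\Q}(\alpha)^{e}$ with $e:=n/[E^+:\Q]$, replacing a form $F$ by $\alpha F$ multiplies $\det(L,F)=[L^{\#,F}:L]$ by $\Nr_{E^+/\Q}(\alpha)^{e}$. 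Fixing some $F_1\in\tpForms(G)$ (which exists as $G$ is finite) and rescaling it by a positive integer so that $(L,F_1)$ is integral, the problem becomes: find a totally positive $\alpha\in E^+$ with $(L,\alpha F_1)$ integral and all prime divisors of the integer $\det(L,\alpha F_1)$ lying in $\tilde{\Pi}(\abs G,E^+)$.

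Next I would single out the primes needing attention. A prime dividing $\det(L,F_1)$ that also divides $\abs G$ is already permitted, so only primes $\ell\nmid\abs G$ are an issue. For such $\ell$ the localization $\Z_{(\ell)}G$ is a maximal order by Maschke's theorem, and $\End(G)\otimes_\Q\Q_\ell$ is a product of matrix algebras over unramified extensions of $\Q_\ell$; hence $L$ localized at $\ell$ is, up to isomorphism, the unique $\Z_{(\ell)}G$-lattice in $V$, and therefore admits a unimodular $G$-invariant symmetric form. Translating this back through the parametrization gives, for each bad $\ell$, a local scalar $\alpha_\ell\in(E^+\otimes_\Q\Q_\ell)^{\times}$, lying in the relevant local lattice and locally totally positive, for which $(L,\alpha_\ell F_1)$ is unimodular at $\ell$, i.e.\ whose effect removes the $\ell$-part of the determinant; at the (finitely many, also bad) primes $\ell\nmid\abs G\det(L,F_1)$ one simply takes $\alpha_\ell$ to be a local unit.

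The heart of the argument — and the step I expect to be the genuine obstacle — is to patch the finitely many $\alpha_\ell$ into one \emph{global} totally positive $\alpha\in E^+$ which realizes each $\alpha_\ell$ up to local units, is a local unit at the primes dividing $\abs G$, and whose norm contributes no prime divisors outside $\tilde{\Pi}(\abs G,E^+)$. Weak approximation readily produces an element with the prescribed valuations at the bad primes, but it is in general neither totally positive nor supported on an acceptable prime set. Correcting this is exactly what the set $\Pi$ is designed for: its two defining properties — that a fixed finite prime set suffices both to make elements totally positive and to represent every ideal class — applied along the subfields $k\subseteq E^+$ that occur when one descends the relevant ideal-class data, let one carry out the correction while introducing only primes from $\bigcup_{k\subseteq E^+}\Pi(k)$, hence from $\tilde{\Pi}(\abs G,E^+)$. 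The resulting $F:=\alpha F_1\in\tpForms(U)$ then satisfies the claim; the detailed bookkeeping of this globalization is the substance of \cite[Korollar III.3]{ERMG}.
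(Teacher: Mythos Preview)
The paper does not actually prove this theorem; it is quoted as \cite[Korollar III.3]{ERMG} and only the surrounding remarks explain how the result is used. There is therefore no proof in the present paper to compare your attempt against.

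Taken on its own merits, your sketch identifies the right ingredients --- the parametrization $\tpForms(U)=\{\alpha F_0\mid \alpha\in E^+ \text{ totally positive}\}$, a local analysis at primes $\ell\nmid\abs{G}$, and a global patching step governed by the sets $\Pi(k)$ --- and is broadly in the spirit of the argument the citation points to. Two places would need tightening before it is a proof. First, the assertion that $L_\ell$ is ``the unique $\Z_{(\ell)}G$-lattice in $V$'' is too strong: $\End(G)\otimes_\Q\Q_\ell$ may split into several local fields, so there are several isomorphism classes; what one really uses is that for $\ell\nmid\abs{G}$ any two $\Z_{(\ell)}G$-lattices in $V_\ell$ differ by a fractional ideal of the local maximal order, and one then has to argue that this ideal can be realised by an element of $E^+\otimes\Q_\ell$ (not merely of $\End(G)\otimes\Q_\ell$, which need not commute with the $E^+$-parametrisation you set up). Second, the reason \emph{all} subfields $k\subseteq E^+$ appear in $\tilde\Pi$ is that the globalisation proceeds by descent through a tower of subfields, correcting ideal classes and signs one layer at a time; a single weak-approximation step in $E^+$ is not enough, and your sketch, while gesturing at this, does not make the inductive structure explicit. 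You yourself flag that this bookkeeping is the substance of the reference, which is fair, but be aware that it is where the work lies.
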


To find all (conjugacy classes of) \simf\ matrix groups $G$ that contain a conjugate copy of $U$, one has to consider the groups $\Aut_K(L, F)$ for some minimal totally complex subfield $K$ of $E$ and some integral pair $(L, F)$ where the prime divisors of $\det(L, F)$ are bounded by $\tilde{\Pi}((n+1)!, E^+)$ (see Minkowski's bound).

The following ideas reduce the number of pairs $(L,F)$ that one has to consider to a finite number (see \cite[Chapter VII]{ERMG} and \cite[Remark 2.2.13]{MK} for details).
\begin{itemize}
\item
If $G$ fixes some integral $(L, F)$ which is not normalized, then exists some prime $p$ dividing $\det(L, F)$ such that  $(L \cap p^{-1}L^{\#, F}, F)$ is integral. Iterating this process, one finds some normalized pair $(L', F) \in \ZZ(G)\times \tpForms(G)$.
\item 
Suppose $L_1, \dots, L_r$ represent the isomorphism classes of $\left\langle U \right\rangle_\Z$-invariant lattices in $\Q^{n \times n}$.
Then $L'$ is isomorphic to some $L_i$ i.e. there exists some $x \in \GL_n(\Q)$ such that $L' = L_i x$. 
\item
Since $G= \Aut_K(L', F) = \Aut_K(L_ix, F) = x\Aut_K(L_i, x^{-1} F \mtranspose{x})x^{-1}$ one only has to consider the normalized pairs $(L_i, F)$ with $F \in \tpForms(U)$ such that the prime divisors of $\det(L_i, F)$ are contained in $\tilde{\Pi}((n+1)!, E^+)$.
\item
If $y \in E$ such that $L_i y = L_i$ then $\Aut_K(L_iy, F)$ is conjugate to $\Aut_K(L_i, y^{-1}F\mtranspose{y}) = \Aut_K(L_i, \Nr_{E/E^+}(y^{-1})F)$.
\end{itemize}

\subsection{Quaternion algebras as endomorphism rings}\label{subsetquat}

Suppose $N < \Sp_{2n}(\Q)$ is finite such that $Q:= \End(N)$ is a quaternion algebra, i.e. a central simple algebra of degree $4$ over its center $K$.
The next theorem explains how to find (up to conjugacy) all \simf\ supergroups $G$ of $N$ such that $N \unlhd G$. After replacing $N$ by $\B(N)$, one may assume that $N=\B(N)$.

\begin{theorem}\label{QA}
Suppose $x \in \GL_{2n}(\Q)$ commutes with $K$ and acts on $N$ such that $x^2 \in N$.
\begin{enumerate}
\item $\End(\left\langle N, x \right\rangle) \isom K[X] / (X^2-\nr(a))$ for some $a \in Q$ where $\nr\colon Q \to K$ denotes the reduced norm.
\item If $y \in \GL_{2n}(\Q)$ induces the same outer automorphism on $N$ as $x$ then $\End(\left\langle N, y \right\rangle) \isom K[X] / (X^2-u\nr(a))$ for some torsion unit $u \in K^* \cap \nr(Q^*)$.
\item If $\End(\left\langle N, x \right\rangle)$ is a field and $u \in (K^*)^2$ then $\left\langle N, x \right\rangle$ and $\left\langle N, y \right\rangle$ are conjugate.
\end{enumerate}
\end{theorem}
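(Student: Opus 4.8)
The plan is to analyze the ring $R := \End(\langle N, x\rangle)$ directly as a $K$-algebra, using that $x$ centralizes $K$, normalizes $N$, and satisfies $x^2 \in N$. Since $x$ centralizes $K$ and $\langle N, x\rangle$ is generated by $N$ together with $x$, the commuting algebra $R$ consists of those elements of $\End(N) = Q$ that are fixed by conjugation by $x$, i.e. $R = C_Q(x)$, the centralizer of the (inner-by-$x$) action on $Q$. Conjugation by $x$ induces a $K$-algebra automorphism $\sigma$ of $Q$; by the Skolem--Noether theorem $\sigma$ is inner, say $\sigma = \mathrm{conj}(a)$ for some $a \in Q^*$, and then $R = C_Q(a) = K[a] = K[X]/(\mu_a(X))$ where $\mu_a$ is the minimal polynomial of $a$ over $K$. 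One may adjust $a$ by a scalar in $K^*$ so that $a$ satisfies $X^2 = \nr(a)$ (replace $a$ by $a - \tfrac{1}{2}\mathrm{tr}(a)$, a pure quaternion, whose square is $-\nr$ of the pure part, which equals $\nr$ of the adjusted element up to sign conventions); this gives part~(1). I would be careful here that $a$ can be chosen to genuinely lie in $Q$ (not a larger algebra) and that $K[a]$ is $2$-dimensional over $K$ — this is automatic unless $\sigma$ is trivial, in which case $R = Q$ and the statement is to be interpreted with $a$ central, $\nr(a) \in (K^*)^2$.

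For part~(2), suppose $y$ induces the same outer automorphism on $N$ as $x$. Then $y$ and $x$ act on $Q = \End(N)$ by automorphisms differing by an inner automorphism of $N$, hence (after absorbing into $a$) by the same element of $\Out(N)$ modulo inner automorphisms coming from $\langle N\rangle$. The key point is that $x^{-1}y$ centralizes $N$, hence lies in $Q^* = \End(N)^*$, so $y = x n$ for some $n \in \langle N, x\rangle$-related element, and more precisely $y^2 = (xn)^2$ can be rewritten, using $x^2 \in N$ and the commutation relations, in the form $y^2 = x^2 \cdot (\text{element of } N) \cdot (\text{conjugate})$. Tracking this through, conjugation by $y$ is $\mathrm{conj}(a')$ where $a' = a \cdot q$ for some $q \in Q^*$ with $q$ constrained so that $(a')^2$ differs from $a^2$ by a factor that is simultaneously in $K^*$ and in $\nr(Q^*)$; since both $x$ and $y$ have $x^2, y^2 \in N$ of finite order, this factor is a torsion unit $u \in K^* \cap \nr(Q^*)$. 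Hence $R_y := \End(\langle N, y\rangle) \isom K[X]/(X^2 - u\,\nr(a))$. The main obstacle I anticipate is precisely this bookkeeping: verifying that the discrepancy between $x^2$ and $y^2$ descends to a well-defined torsion element of $K^*/\nr(Q^*)$-coset representatives, and that it actually lies in $\nr(Q^*)$ rather than just $K^*$ — this uses that $y^2 \in N$ so its norm form value is realized inside $Q$.

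For part~(3), assume $R = \End(\langle N, x\rangle)$ is a field (equivalently $X^2 - \nr(a)$ is irreducible over $K$, i.e. $\nr(a) \notin (K^*)^2$) and that $u \in (K^*)^2$, say $u = v^2$ with $v \in K^*$. Then by part~(2), $\End(\langle N, y\rangle) \isom K[X]/(X^2 - v^2\nr(a)) \isom K[X]/(X^2 - \nr(a)) = R$ via $X \mapsto vX$, so $\langle N, x\rangle$ and $\langle N, y\rangle$ have isomorphic (field) commuting algebras. To promote this to conjugacy in $\GL_{2n}(\Q)$, I would argue that since $x$ and $y$ induce the same outer automorphism on $N = \B(N)$, we have $y = x n$ with $n \in \langle N, x\rangle \subseteq \B(N)\cdot\langle x\rangle$; rescaling $y$ by the element $v^{-1} \in K^* \subseteq R^*$ (which centralizes $N$) replaces $y$ by an element $y'$ with $(y')^2 = \nr(a) \cdot(\text{same element of } N \text{ as for } x^2)$, so that $\langle N, x\rangle$ and $\langle N, y'\rangle = \langle N, y\rangle$ satisfy the same defining relations over the field $R$. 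Both are then finite subgroups of $\GL_{2n}(\Q)$ with the same commuting algebra $R$ acting the same way on $N$, hence conjugate by an element of $R^* = \End(N)^*\cap(\dots)$ — concretely, the $R$-linear isomorphism identifying the two copies is realized by a matrix in $\GL_{2n}(\Q)$ since both act on the unique-up-to-isomorphism irreducible $\Q N$-module underlying $\Q^{1\times 2n}$. The subtle point is that $v$ must be chosen in $K^*$ (not merely in a quadratic extension), which is exactly the hypothesis $u \in (K^*)^2$; without it the rescaling would leave the rational span and only a conjugacy over a larger field would follow.
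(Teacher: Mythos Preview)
Your part (1) has the right skeleton (Skolem--Noether gives $a\in Q^*$ with $C_Q(a)=K[a]$), but you miss the clean reason the minimal polynomial has the shape $X^2-\nr(a)$: since $x^2\in N$, conjugation by $x^2$ is trivial on $Q=\End(N)$, so $a^2\in Z(Q)=K$ directly---no need to replace $a$ by its pure part (which, incidentally, does \emph{not} induce the same inner automorphism, though it does generate the same $K$-subalgebra). In part (2) you assert that $x^{-1}y$ centralizes $N$; this is false. Saying $y$ and $x$ induce the same \emph{outer} automorphism only gives that $yx^{-1}$ acts on $N$ by an \emph{inner} automorphism, so $yx^{-1}\in N\cdot Q^*$, say $yx^{-1}=gc$ with $g\in N$, $c\in Q^*$. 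The element $g$ is essential: one shows $c\,(xcx^{-1})=(g^{-1}y)^2x^{-2}\in N$ has finite order, whence $\nr(c)^2$ and therefore $u:=\nr(c)$ is a torsion unit. Your bookkeeping never isolates this mechanism.

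The real gap is part (3). Rescaling $y$ by $v^{-1}\in K^*$ accomplishes nothing: since $u=v^2$ is a torsion unit, so is $v$, hence $v\in\B(N)=N$ and $\langle N,v^{-1}y\rangle=\langle N,y\rangle$. Worse, you claim conjugacy ``by an element of $R^*$'', but $R=\End(\langle N,x\rangle)$ centralizes $\langle N,x\rangle$, so such conjugation is the identity. The actual conjugating element is found by a second application of Skolem--Noether inside $Q$: the elements $a$ and $v^{-1}ca$ both square into $K$ and satisfy $\nr(v^{-1}ca)=\nr(a)$ (using $\nr(v^{-1}c)=u^{-1}u=1$), hence share the same minimal polynomial over $K$; so there exists $t\in Q^*$ with $a^t=v^{-1}ca$. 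A direct computation then gives $x^t=t^{-1}(xtx^{-1})x=t^{-1}ata^{-1}\,x=v^{-1}c\,x=(gv)^{-1}y$, so $\langle N,x\rangle^t=\langle N,y\rangle$. This explicit construction---not an abstract ``same defining relations'' argument---is what is missing from your proposal.
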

\begin{proof}
\begin{enumerate}
\item For any $z \in Q$ one has $z^x \in \End(N^x) =  \End(N) = Q$. Hence $x$ induces a $K$-automorphism on $Q$. By the Skolem-Noether theorem there exists some $a \in Q^*$ such that $z^a = z^x$ for all $z \in Q$.  Since $x$ does not centralize $Q$ (otherwise $x \in \B(N)= N$ by Lemma \ref{lemmabravais}) it follows that
$$K \subsetneq K[a] \subseteq C_Q(a) = \End(\left\langle N, x \right\rangle)) \subsetneq Q\:. $$
Thus $K[a] = \End(\left\langle N, x \right\rangle)$. Since $x^2 \in N$, it induces the identity on $Q$. Hence $a^2 \in K$ and therefore $K[a] \cong K[X]/(X^2-\nr(a))$.
\item Since $yx^{-1}$ induces an inner automorphism on $N$, it is contained in $N Q^*$. Say $yx^{-1} = g c$ with $g \in N$ and $c \in Q$. Then
$$\End({\left\langle N, y\right\rangle} )= \End({\left\langle N, cx \right\rangle} ) \isom K[X] / (X^2 - \nr(ca))\:.$$
Further $cxcx^{-1} \in (g^{-1}y)^2 x^{-2} \in N$ has finite order. Hence the reduced norm $\nr(c)^2 = \nr(c \cdot (xcx^{-1})) \in K^*$ also has finite order. Thus $u:= \nr(c)$ is a torsion unit as claimed.
\item Let $v\in K$ such that $v^2 = u$. Since $v$ has finite order it is contained in $\B(N) = N$ by Lemma \ref{lemmabravais}. The elements $a^2$ and $(v^{-1}ca)^2$ are both contained in $K$. So $\nr(v^{-1}c)=1$ implies that $a$ and $v^{-1}ca$ have the same minimal polynomial over $K$. It follows from the Skolem-Noether theorem that there exists some $t\in Q^*$ such that $a^t=v^{-1}ca$. Finally
$$x^t = t^{-1} xtx^{-1}x = t^{-1} ata^{-1} x = v^{-1} ca a^{-1} x = (gv)^{-1} y $$
shows $\left\langle N, x \right\rangle^t = \left\langle N, (gv)^{-1} y \right\rangle = \left\langle N, y \right\rangle$.
\end{enumerate}
\end{proof}

In most cases, the structure of $Q$ allows some simplifications:
\begin{enumerate}
\item If $K \isom \Q$ then obviously every $x \in \GL_{2n}(\Q)$ commutes with $K$. 
\item If $Q$ is totally definite then $\End(\left\langle N, x \right\rangle)$ must be a field and there exists no nontrivial torsion unit in $K^* \cap \nr(Q^*)$. I.e. each class of outer automorphisms gives rise to at most one conjugacy class  $\left\langle N, x \right\rangle$.
\item Suppose one wants to find all \spimf\ supergroups $G$ of $N$ satisfying $N = \B(F^*(G))$. If $Q$ is totally definite and $K \isom \Q$ then the proof of \cite[Theorem 4]{PrimStructure} shows that the exponent of $G/N$ is at most $2$. Hence one gets all such groups $G$ (up to conjugacy) with the above result and the $2$-parameter argument.
\end{enumerate}

Here is a simple example.

\begin{example}
Suppose $G < \Sp_4(\Q)$ is \spimf\ and it contains a normal subgroup $U$ isomorphic to $Q_8$. (Note that the character of $U$ is uniquely determined by Lemma \ref{mainlemma}.)

Let $N:= \B(U) \isom \SL_2(3)$. Then $\End(U) = \End(N) \isom Q_{\infty,2}$ is the quaternion algebra over $\Q$ ramified only at $2$ and $\infty$. If there exists some $x \in C_G(N) - N$ then $x$ is contained in some maximal order of $Q$. These orders are all conjugate and have $\SL_2(3)$ as group of torsion units. Thus one can assume that the order of $x$ equals $3$ or $4$. In both cases, the commuting algebra of $H:= \left \langle N ,x \right \rangle$ is an imaginary quadratic number field. If $x$ has order $3$ then $H$ is \simf. In the other case, it follows from the $2$-parameter argument that $H$ is only contained in the \simf\ group $\B(D_8 \tensor C_4) = (D_8 \tensor C_4).S_3$. 
Suppose now $N$ is self-centralizing in $G$. One immediately finds the 4-dimensional faithful representation of $\GL_2(3)$ with commuting algebra $\Q(\sqrt{-2})$ as one possibility for $G$. By the above result, $\GL_2(3)$ is (up to conjugacy) the only possibility since $\Out(N) \isom C_2$ and $\Q^* \cap \nr(Q_{\infty, 2})\subseteq \Q_{>0}$ contains no nontrivial torsion unit.
\end{example}

\section{Tables}\label{sectab}

A database for the the conjugacy classes of \spimf\ subgroups of $\GL_{2n}(\Q)$ up to $n=11$ will be available in future versions of \MAGMA. In the mean time, the database is
available from \url{http://www.math.rwth-aachen.de/~Markus.Kirschmer/symplectic/} .

Loading the file in \MAGMA\ via the {\tt load} command generates an associative array called {\tt SGroups}.
The data for dimension $2n$ is stored in {\tt SGroups[2n]} as a sequence of triples {\tt <FF, o, s>}.
\begin{enumerate}
\item {\tt FF} is a sequence of two forms $F,S$ such that 
\begin{align*}
A &= \Aut_{\Q[SF^{-1}]}(\Z^{1\times 2n}, F)\\
  &= \Aut(\Z^{1\times 2n}, \{F,S\}) := \{g \in \GL_{2n}(\Z)\mid gF\transpose{g}=F \mbox { and } gS\transpose{g}=S\}
\end{align*}
represents the corresponding conjugacy class.
\item The forms $F,S$ satisfy the following properties:
\begin{enumerate}
\item $F\in \tpForms(A)$ and $S \in \Forms_{skew}(A)$.
\item The pair $(\Z^{1\times 2n}, F) \in \ZZ(A)\times \tpForms(A)$ is normalized and has minimal determinant among all normalized $(L, F') \in  \ZZ(A)\times \tpForms(A)$.
\item $SF^{-1}$ is a primitive element of the field $\End(A)$ such that its minimal polynomial $\mu(SF^{-1}, X)$ is one of
\begin{itemize}
\item $X^2+d$ for some squarefree $d \in \N$.
\item $\mu(\zeta_k-\zeta_k^{-1}, X)$ for some even $k \in \Z_{\ge 6}$.
\item $\mu(\zeta_{26} + \zeta_{26}^3 + \zeta_{26}^9, X)$.
\item $\mu(\sqrt{k}\cdot(\zeta_{\ell}-\zeta_{\ell}^{-1}), X)$ where $(k, \ell) \in \{(2, 10),\, (3,10),\, (3,16)\}$.
\item $\mu(\sqrt{-1}+\sqrt{-3}+\sqrt{-5}, X)$.
\end{itemize}
\end{enumerate}
\item {\tt o} is the order of the groups in the conjugacy class of $A$
\item {\tt s} is a string which contains the name of the class of $A$ given in my thesis ($\tensor$ is abbreviated t and so on...).
\end{enumerate}

\begin{example}
Suppose one wants to access the group $\GL_2(\F_3) < \GL_4(\Q)$ in the database. From
\begin{verbatim}
  [x[3] : x in SGroups[4]];
\end{verbatim}
which yields the list:
\begin{verbatim}
  > [ D8tC4.S3, C4tA2, GL23, SL23oC3, C10 ]
\end{verbatim}
one sees that it is the third entry of SGroups[4]. So
\begin{verbatim}
  A:= AutomorphismGroup(SGroups[4,3,1]);
\end{verbatim}
generates a representative {\tt A} of that class.
\end{example}

\begin{remark}
If $G < \GL_{2n}(\Q)$ is \spimf, one can use the information in the database to recover the conjugacy class of $G$ as follows.

If $L_1,\dots,L_s$ represent the isomorphism classes of $G$-invariant lattices, then there exist only finitely many forms $F \in \tpForms(G)$ such that $(L_i, F)$ is normalized and of minimal determinant. After a change of base (i.e. conjugating $G$ with a basis matrix of $L_i$), one may assume that $L_i = \Z^{1\times 2n}$. Then for each $F$ there exist at most $[\End(G):\Q]$ skewsymmetric forms $S \in \Forms(G)$ such that $(F,S)$ satisfies the properties from above. For each such pair, one checks whether there exists some simultaneous isometry between $(F,S)$ and one of the pair of forms $(F', S')$ in the database. I.e. one tests (using the approach described in \cite{Souvignier} which is available in \MAGMA\ via the {\tt IsIsometric} command) whether there exists some $T \in \GL_{2n}(\Z)$ such that $TF\transpose{T} = F'$ and $TS\transpose{T} = S'$.
Such a matrix $T$ exists if and only if $G$ and $A:= \Aut(\Z^{1\times 2n}, \{F',S'\})$ are conjugate. If so, then clearly $\Aut(\Z^{1\times 2n}, \{F,S\}) = A^T$. So after reverting the base change from above, one ends up with a conjugating element between $G$ and $A$.
\end{remark}

\bibliography{sympl}

\end{document}